\numberwithin{equation}{section}
\newcounter{hours}\newcounter{minutes}
\newcommand{\hourandminute}{\setcounter{hours}{\time/60}%
\setcounter{minutes}{\time-\value{hours}*60}%
\thehours.\theminutes}
\newcommand{\Versione}{\jobname\ \today\ \hourandminute}
\newlength{\Indent}
\newlength{\Parskip}
\theoremstyle{plain}
\newtheorem{thm}{Theorem}[section]     
\newtheorem{lemma}[thm]{Lemma}
\newtheorem{prop}[thm]{Proposition}
\theoremstyle{remark}
\newtheorem{Remark}[thm]{Remark}
\newenvironment{remark}{\begin{Remark}}{\qed\end{Remark}}
\theoremstyle{definition}
\newtheorem{Defin}[thm]{Definition}
\newenvironment{defin}{\begin{Defin}}{\qed\end{Defin}}
\DeclareMathOperator{\Div}{div}
\newcommand{\RN}{\Bbb{R}^{N}}
\newcommand{\R}{\Bbb{R}}
\newcommand{\abs}[1]{\lvert#1\rvert}
\newcommand{\Abs}[1]{\left | #1 \right |}
\newcommand{\norm}[1]{\lVert#1\rVert}
\newcommand{\di}{\,\text{\rmfamily\upshape d}}
\newcommand{\pder}[2]{\frac{\partial #1} {\partial #2}}
\newcommand{\norma}[2]{\norm{#1}_{#2}}
\newcommand{\Om}{\varOmega}
\newcommand{\eps}{\varepsilon}
\newcommand{\CC}{\mathcal{C}}
\newcommand{\WW}{\mathcal{W}}
\newcommand{\ZZ}{\Bbb{Z}}
\def\X{{\mathcal X}}
\newcommand{\lfint}{\lambda_{\textup{int}}}
\newcommand{\lfout}{\lambda_{\textup{out}}}
\newcommand{\lfboth}{\lambda}
\newcommand{\lfbothe}{\lambda^\eps}
\newcommand{\lfav}{\lfboth_{0}}
\newcommand{\eh}{\boldsymbol{e}_{h}}
\newcommand{\ej}{\boldsymbol{e}_{j}}
\newcommand{\Per}{E}
\newcommand{\Omint}{\Om_{\textup{int}}^{\eps}}
\newcommand{\Omout}{\Om_{\textup{out}}^{\eps}}
\newcommand{\Memb}{\varGamma^{\eps}}
\newcommand{\wMemb}{{\widehat\varGamma}^{\eps}}
\newcommand{\Perint}{\Per_{\textup{int}}}
\newcommand{\Perout}{\Per_{\textup{out}}}
\newcommand{\Permemb}{\varGamma}
\newcommand{\perzero}{H^{1}_{\#}(Y)}
\newcommand{\XX}{\X^\eps}
\newcommand{\beltrami}{\Delta^{\!\!B}}
\newcommand{\beltramigrad}{\nabla^{B}}
\newcommand{\beltramidiv}{\Div^{B}}
\newcommand{\spaziosoleps}{L^2\big(0,T;\XX_0(\Om)\big)}
\newcommand{\wto}{\rightharpoonup}
\newcommand{\cell}{Y}
\newcommand{\celloc}[1]{\cell_{\eps}#1}
\newcommand{\genfun}{w}
\newcommand{\hatfun}{\widehat{\genfun}}
\newcommand{\tildefun}{\widetilde{\genfun}}
\newcommand{\genfune}{\genfun_{\eps}}
\newcommand{\fun}{\phi}
\newcommand{\intepart}[1]{\left[#1\right]}
\newcommand{\macropart}[2]{\left[\frac{#1}{#2}\right]_{\cell}}
\newcommand{\micropart}[2]{\left\{\frac{#1}{#2}\right\}_{\cell}}
\newcommand{\intOset}{\widehat{\Om}_{\eps}}
\newcommand{\intspacetime}{\Lambda^{\eps}_T}
\newcommand{\const}{\gamma}
\newcommand{\unfop}{\mathcal{T}_{\eps}}
\newcommand{\btsunfop}{\mathcal{T}^b_{\eps}}
\newcommand{\average}{\mathcal{M}^\eps}
\newcommand{\saverage}{\mathcal{M}_{\cell}}
\newcommand{\nOmout}{\Om^{\eps,\eta}}
\newcommand{\nOmoutuno}{\Om^{\eps,\eta}_{\text{int}}}
\newcommand{\nOmoutdue}{\Om^{\eps,\eta}_{\text{out}}}
\newcommand{\nOmint}{\varGamma^{\eps,\eta}}
\newcommand{\wnOmint}{{\varGamma}^{\eps,2\eta}}
\newcommand{\nMemb}{\partial\varGamma^{\eps,\eta}}
\newcommand{\nPerout}{\Per^\eta}
\newcommand{\nPeroutuno}{\Per_{\text{int}}^{\eta}}
\newcommand{\nPeroutdue}{\Per_{\text{out}}^{\eta}}
\newcommand{\nPerint}{\Permemb^\eta}
\newcommand{\nPermemb}{\partial\nPerint}
\newcommand{\soluz}{u^\eta_\eps}
\newcommand{\soluzt}{u^\eta_{\eps t}}
\newcommand{\ndfbothe}{A^{\eta\eps}}
\newcommand{\nlfbothe}{B^{\eta\eps}}
\newcommand{\testeta}{\Phi^{\eta\eps}}
\newcommand{\yg}{y_{_{\Memb}}}
\newcommand{\tangrad}[1]{\nabla^{B}_{\Memb_{#1}}}
\begin{document}

\title
{Concentration and homogenization in electrical conduction in heterogeneous media involving the
      \textsc{L}aplace-\textsc{B}eltrami operator}
\author{M. Amar$^\dag$ -- D. Andreucci$^\dag$ -- R. Gianni$^\ddag$ -- C. Timofte$^\S$\\
\hfill \\
$^\dag$Dipartimento di Scienze di Base e Applicate per l'Ingegneria\\
Sapienza - Universit\`a di Roma\\
Via A. Scarpa 16, 00161 Roma, Italy
\\ \\
$^\ddag$Dipartimento di Matematica ed Informatica\\
Universit\`{a} di Firenze\\
Via Santa Marta 3, 50139 Firenze, Italy
\\ \\
$^\S$University of Bucharest\\
Faculty of Physics\\
P.O. Box MG-11, Bucharest, Romania
}

\begin{abstract}
We study a concentration and homogenization problem modelling electrical conduction in a composite material.
The novelty of the problem is due to the specific scaling of the physical quantities characterizing the dielectric
component of the composite. This leads to the appearance of a peculiar displacement current governed by a Laplace-Beltrami
pseudo-parabolic equation. This pseudo-parabolic character is present also in the homogenized equation, which is obtained
by the unfolding technique.
\medskip

  \textsc{Keywords:} Homogenization, Concentration, Laplace-Beltrami operator, Pseudo-parabolic equations.

  \textsc{AMS-MSC:} 35B27, 35K70, 74Q10
  \bigskip

 \textbf{Acknowledgments}: The first author is member of the \emph{Gruppo Nazionale per l'Analisi Matematica, la Probabilit\`{a} e le loro Applicazioni} (GNAMPA) of the \emph{Isti\-tuto Nazionale di Alta Matematica} (INdAM).
The second author is member of the \emph{Gruppo Nazionale per la Fisica Matematica} (GNFM) of the \emph{Istituto Nazionale di Alta Matematica} (INdAM).
The last author wishes to thank \emph{Dipartimento di Scienze di Base e Applicate per l'Ingegneria} for the warm hospitality and \emph{Universit\`{a} ``La Sapienza" of Rome} for the financial support.

\end{abstract}

\maketitle



\section{Introduction}\label{s:introduction}
The continuous development of new nano-engineered materials, as well as the search for new diagnostic devices in medicine and biology, has given big momentum to new studies in the field of composite  media, in particular with regard to their physical behavior especially from the electric conduction and heat conduction point of view. Typically, these materials are composed by a hosting medium in which nano-particles inclusions (having different physical properties) are present, commonly arranged in a periodic lattice.

The thermal behavior of these composite materials plays a fundamental role, e.g., in the design of heat dissipating fillers (e.g. for electronic devices) and in the creation  of new generation motor coolants (see \cite{Dehghani:Soni:2005,Sebnem:2010, Warangkhana:2012,Khan:2011}). Some of the authors have conducted researches in this direction (\cite{Amar:Gianni:2016B, Amar:Gianni:2016A, Amar:Gianni:2016C}) in which surface heat conduction on the interface $\Gamma$ separating the inclusions from the host material plays a fundamental role.
The case of electrical conduction was considered in \cite{Amar:Andreucci:Gianni:Timofte:2017B}.
In the models thus obtained, a non-standard surface conduction appears, governed by a Laplace-Beltrami operator.

In this article, we deal with the electric behavior of nano-composites, thus pursuing a research started by some of the authors years ago (\cite{Amar:Andreucci:Bisegna:Gianni:2003c}--%
\cite{Amar:Andreucci:Gianni:2014a}).

In that framework, the composite was used to model a biological tissue in which the hosting medium is  the extracellular material, while the inclusions are the cells separated from the surrounding medium by the lipidic cell membrane which behaves like a dielectric and, therefore, exhibits a capacitive behavior.

In those papers both the case of “thick” $N$-dimensional membranes and the one in which the membranes are regarded as electrically active, $(N-1)$-dimensional surfaces are considered. Namely, the first case is reduced to the second one via a concentration technique. Obviously, this requires specific scalings of the relevant physical quantities. The scalings used in that case were consistent with the peculiar physical situation under examination, namely we assumed that the relative dielectric constant of the ”thick” membrane scaled as $\eta$ (where $\eta$ is the relative width of the  membrane with respect to the diameter of the cell). This choice reduces the limit problem to a system of partial differential equations in which the current is continuous across the $(N-1)$-dimensional membranes and the time derivative of the jump of the potential across the membranes is proportional to the current. Such a model reproduces the standard equations satisfied, in electric conduction, by capacitors and therefore appears to be the natural model to describe the capacitive behavior of cell membranes. Finally, the $(N-1)$-dimensional model was homogenized taking into account the large number of cells present in the medium, i.e. a limit for $\eps$ going to zero of the solution was performed ($\eps$ being the typical length scale of a cell). The homogenized limit thus obtained solves an elliptic equation with memory (the memory being a direct consequence of the presence of capacitors).

In this paper, we investigate the same electrical conduction model, but with a relative dielectric constant scaling as $1/\eta$ in the thick
interface. This is consistent therefore with a strongly insulating coating as made possible by new materials (for instance, the barium titanate)
in the construction of electronic devices.



Hence, in this paper, a concentration limit is performed using the aforementioned scaling. Such a concentration limit relies, as usual, on suitable choices of test functions in the weak formulation to select the quantities appearing in the limit equations satisfied on the membranes.


Going into details, after concentration, we obtain a system of partial differential equations made of two elliptic equations satisfied in the hosting medium and in the inclusions with the potential being continuous across the interfaces, while the jump of the currents plays the role of the source term for a Laplace-Beltrami equation satisfied by the time derivative of the potential on the interfaces.

This problem is highly non-standard and, as far as we know, mathematically new.
Existence and uniqueness for the previous problem are mathematically interesting and were proved by the authors in \cite{Amar:Andreucci:Gianni:Timofte:2017B}.

Here, the concentration limit equations are homogenized letting $\eps$ go to zero (assuming a periodic arrangement of the inclusions),
via an unfolding method. It is worthwhile noticing that the unfolding limits on $\Gamma$ are quite technical and of some mathematical interest (see also \cite{Amar:Gianni:2016A}).

Owing to the linearity of the problem, the first corrector can be factorized even if the structure of the factorization is neither standard, nor simple (as was already the case in the problem studied in \cite{Amar:Andreucci:Bisegna:Gianni:2004a, Amar:Andreucci:Bisegna:Gianni:2006a, Amar:Andreucci:Bisegna:Gianni:2013}). For this reason, we are able to produce a limit equation for the macroscopic variable $u$, which contains a memory term, as in the previously quoted papers.
The main and more relevant difference with respect to the previous case, is that now the partial differential equation with memory is not always an elliptic equation,
but it substantially depends on the underlying geometry. In particular, when both the hosting medium and the inclusion are connected, we obtain
a pseudo-parabolic equation. The class of such homogenized problem will be studied by the authors in a forthcoming paper.
It is worthwhile noticing that in the present case of the dielectric constant scaling as $1/\eta$ we expect that the two steps of concentration and
homogenization commute, that is they lead to the same problem, if applied in different order. This is not expected for the problem with
dielectric constant scaling as $\eta$. An additional peculiarity of the present problem is the appearance of compatibility conditions
needed to solve the cell equations.

The results quoted above concern the case in which the permeability in the concentrated problem scales as $\eps$; however,
in order to present a complete study of the problem, we consider also all the other possible scalings $\eps^k$, with $k\in\R$.
In these cases, the resulting homogenized equation is a standard elliptic partial differential equation, not containing any memory term
(see Theorems \ref{t:casoKpiccolo_bis} and \ref{t:casoKgrande}).
Note that the concentration procedure is independent of $k$.
Moreover, for certain scalings and a particular geometry (i.e. if both the hosting medium and the inclusion are connected), the
sequence of the solutions $u_\eps$ of the approximating problems tends to $0$, independently of the presence of a non-zero source
and non-zero initial datum (see Theorem \ref{t:casoKpiccolo}).

\medskip

The paper is organized as follows. In Subsection \ref{s:LB} we briefly recall the definition and the main properties of the tangential operators (gradient, divergence, Laplace-Beltrami operator), in Subsection \ref{ss:geometric} we state our geometrical setting, in Subsection \ref{ss:unfold} we recall the main properties of the unfolding operator and, finally, in Subsection \ref{ss:position} we state the problem and our main results. Section 3 is devoted to the derivation of the concentrated problem.  In Section 4, we prove the homogenization result for the microscopic problem \eqref{eq:PDEin}--\eqref{eq:InitData} (i.e., the scaling $k=1$).
Finally, in Section 5, we consider the other scalings $\eps^k$, for $k<1$ and $k>1$.

\medskip
\goodbreak
\eject

\section{Preliminaries}\label{s:threeD_problem}
\nobreak
\subsection{Laplace-Beltrami derivatives}
\label{s:LB}
Let $\phi$ be a ${\mathcal C}^2$-function,  $\mathbf{\Phi}$ be a
${\mathcal C}^2$-vector function and $S$ a smooth surface
with normal unit vector $n$.
We recall that the tangential gradient of $\phi$ is given by
\begin{equation}\label{eq:a5bis}
\beltramigrad\phi=\nabla\phi-(n\cdot\nabla\phi)n
\end{equation}
and the tangential divergence of $\Phi$ is given by
\begin{multline}\label{eq:a3bis}
\beltramidiv\mathbf{\Phi}
=\Div\mathbf{\Phi}- (n\cdot\nabla\mathbf{\Phi}_i)n_i-(\Div n)(n\cdot\mathbf{\Phi})
\\
=\beltramidiv \left(\mathbf{\Phi}- (n\cdot\mathbf{\Phi})n\right)
=\Div \left(\mathbf{\Phi}- (n\cdot\mathbf{\Phi})n\right)\,,
\end{multline}
where, taking into account the smoothness of $S$,
the normal vector $n$ can be naturally defined in a small neighborhood of $S$ as a regular field.
Moreover, we define the Laplace-Beltrami operator as
\begin{equation}
\label{eq:beltrami}
\beltrami\phi =\Div^B(\beltramigrad\phi)\,,
\end{equation}
so that, by \eqref{eq:a5bis} and \eqref{eq:a3bis}, we get that the Laplace-Beltrami operator can be written as
\begin{multline}
\label{eq:beltrami_bis}
\beltrami\phi =
\Delta\phi-n^t\nabla^2\phi n - (n\cdot\nabla\phi)\Div n\\
= (\delta_{ij}-n_in_j)\partial^2_{ij}\phi -  n_j\partial_j\phi \partial_in_i
= (I-n\otimes n)_{ij}\partial^2_{ij}\phi-(n\cdot\nabla\phi)\Div n\,.
\end{multline}
Finally, we recall that on a regular surface $S$ with no boundary (i.e. when $\partial S=\emptyset$)
we have
\begin{equation}\label{eq:a66}
\int_S \beltramidiv \mathbf{\Phi}\di\sigma =0 \,.
\end{equation}

\subsection{Geometrical setting}\label{ss:geometric}
The typical periodic geometrical setting is displayed in Figure~\ref{fig:omega} and Figure~\ref{fig:3d}.
Here we give, for the sake of clarity, its detailed formal definition.

Let $N\geq 3$. Introduce a periodic open subset $\Per$
of $\RN$, so that $\Per+z=\Per$ for all $z\in\ZZ^{N}$.
We employ the notation $Y=(0,1)^{N}$ and
$\Perint=\Per\cap Y$, $\Perout=Y\setminus\overline{\Per}$,
$\Permemb=\partial\Per\cap \overline Y$.
We assume that $\Perout$ is connected,
while $\Perint$ may be connected or not.
As a simplifying assumption, we stipulate that $|\Permemb\cap\partial Y|_{N-1}=0$.

Let $\Om$ be an open connected bounded subset of $\RN$; we assume that $\Om$ and $E$ are of class $\CC^\infty$,
though this assumption can be weakened.
For all $\eps>0$, define $\Omint=\Om\cap\eps \Per$,
$\Omout=\Om\setminus\overline{\eps \Per}$, so that
$\Om=\Omint\cup\Omout\cup\Memb$, where $\Omint$ and $\Omout$ are
two disjoint open subsets of $\Om$, and
$\Memb=\partial\Omint\cap\Om=\partial\Omout\cap\Om$.
The region $\Omout$ [respectively, $\Omint$] corresponds to the outer
phase [respectively, the inclusions], while
$\Memb$ is the interface; in fact, these definitions are slightly modified below.
We will consider two different cases: in the first one (to which we will
refer as the {\it connected/disconnected case}, see Fig.\ref{fig:omega})
we will assume $\Permemb\cap \partial Y=\emptyset$; moreover,
we stipulate that all the cells which intersect $\partial\Om$ do not contain any inclusion.

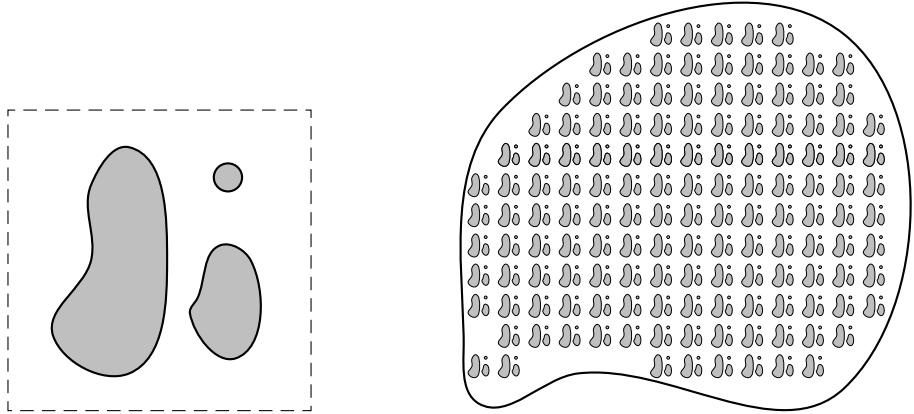
\begin{figure}[htbp]%
\begin{center}%
\begin{pspicture}(12,6)
\rput(0,1){
\psframe[linewidth=0.4pt,linestyle=dashed](0,0)(4,4)
\rput[l](-0.4,0){\psccurve[fillstyle=solid,fillcolor=lightgray](2,.5)(2.5,2)(2,3.5)(1.5,3)(1.5,2)(1,1)}
\psccurve[fillstyle=solid,fillcolor=lightgray](3,.7)(3.2,2)(2.8,2.2)(2.5,1.5)(2.4,1.3)
\pscircle[fillstyle=solid,fillcolor=lightgray](2.9,3.1){.2}
}
\newcommand{\minicell}{\scalebox{0.1}{
\rput[l](-0.4,0){\psccurve[fillstyle=solid,fillcolor=lightgray](2,.5)(2.5,2)(2,3.5)(1.5,3)(1.5,2)(1,1)}
\psccurve[fillstyle=solid,fillcolor=lightgray](3,.7)(3.2,2)(2.8,2.2)(2.5,1.5)(2.4,1.3)
\pscircle[fillstyle=solid,fillcolor=lightgray](2.9,3.1){.2}
}}
\rput(6,1){
\multirput(0,0.4)(0.4,0){2}{\minicell}%
\multirput(2.4,0.4)(0.4,0){6}{\minicell}%
\multirput(0.4,0.8)(0.4,0){12}{\minicell}%
\multirput(0,1.2)(0,0.4){5}{\multirput(0,0)(0.4,0){14}{\minicell}}%
\multirput(0.4,3.2)(0.4,0){13}{\minicell}%
\multirput(0.4,3.2)(0.4,0){13}{\minicell}%
\multirput(0.8,3.6)(0.4,0){12}{\minicell}%
\multirput(1.2,4)(0.4,0){10}{\minicell}%
\multirput(1.6,4.4)(0.4,0){9}{\minicell}%
\multirput(2.4,4.8)(0.4,0){5}{\minicell}%
\psccurve(0.2,.1)(1.5,.5)(5,.3)(5,5)(.5,4)(0,1)
}

\end{pspicture}%
    \caption{\textsl{Left}: the periodic cell $Y$. $\Perint$ is the shaded
    region and $\Perout$ is the white region.
    \textsl{Right}: the region $\Om$.}
    \label{fig:omega}
  \end{center}
\end{figure}

In the second case (to which we will refer as the {\it connected/connected case},
see Fig.\ref{fig:3d}) we will
assume that $\Perint$, $\Perout$, $\Omint$ and $\Omout$ are connected and, without loss of generality,
that they have Lipschitz continuous boundary. In this last case, we stipulate
that there exist $\gamma_1>\gamma_0>0$ such that the picture just described is actually valid in the set
$\{x\in\Om\ :\ {\rm dist}(x,\partial\Om)\geq \gamma_1\eps\}$, while
in the layer $\gamma_0\eps\leq{\rm dist}(x,\partial\Om)\leq \gamma_1\eps$,
the geometry of $\Memb$ is modified in a standard
way to keep it regular and preserve the topological properties of $\Omint$ and $\Omout$,
as well as to obtain that ${\rm dist}(\Memb,\partial\Om)\geq\gamma_0\eps$.

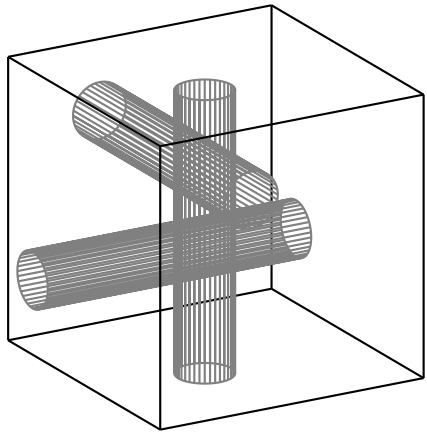
\begin{figure}[htbp]%
\begin{center}%
\begin{pspicture*}(-6,0)(6,6)
\psset{Alpha=30,Beta=20,unit=0.8cm}
\rput(0,2.5){
\pstThreeDCoor[xMin=0,xMax=5,yMin=0,yMax=5,zMin=0,zMax=5,,drawing=false,linecolor=black]
\pstThreeDLine(0,0,0)(0,0,5)
\pstThreeDLine(0,0,0)(0,5,0)
\pstThreeDLine(0,0,0)(5,0,0)
\pstIIIDCylinder[
linecolor=gray,
increment=10%
](3,3,0){0.5}{5}
\pstThreeDPut(0,3,3.5){\pstIIIDCylinder[RotY=90,
linecolor=gray,
increment=10%
](0,3,3){0.5}{5}}
\pstThreeDPut(2,0,5){\pstIIIDCylinder[RotX=-90,
linecolor=gray,
,increment=10%
](3,0,3){0.5}{5}
}
\pstThreeDLine(5,5,5)(0,5,5)
\pstThreeDLine(5,5,5)(5,5,0)
\pstThreeDLine(5,5,5)(5,0,5)
\pstThreeDLine(0,0,5)(0,5,5)
\pstThreeDLine(0,0,5)(5,0,5)
\pstThreeDLine(0,5,0)(0,5,5)
\pstThreeDLine(5,0,0)(5,0,5)
\pstThreeDLine(5,5,0)(5,0,0)
\pstThreeDLine(5,5,0)(0,5,0)
}
\end{pspicture*}%
    \caption{The periodic cell $Y$. $\Perint$ is the shaded
    region and $\Perout$ is the white region.}
    \label{fig:3d}
  \end{center}
\end{figure}

Finally, let $\nu$ denote the normal unit vector to $\Permemb$ pointing
into $\Perout$, extended by periodicity to the whole of $\R^N$, so that $\nu_\eps(x)=\nu(x/\eps)$
denotes the normal unit vector to $\Memb$ pointing into $\Omout$.

\medskip

Actually, in a more realistic framework, the isolating interface is not an
$(N-1)$-dimensional surface, but it has a very small positive thickness.
Hence, we consider also a more physical geometric setting where a small
parameter $\eta>0$ represents the small ratio between the thickness of the physical interface
and the characteristic dimension of the microstructure.
To this purpose, for $\eta>0$, let us write $\Om$ also
as $\Om=\nOmout\cup\nOmint\cup\nMemb$, where $\nOmout$ and $\nOmint$
are two disjoint open subsets of $\Om$, $\nOmint$ is the tubular neighborhood of
$\Memb$ with thickness $\eps\eta$, and $\nMemb$ is the
boundary of $\nOmint$. Moreover, we also assume
that $\nOmout={\nOmoutuno}\cup{\nOmoutdue}$, where
$\nOmoutdue\subset\Omout$, $\nOmoutuno\subset\Omint$
and $\nMemb =(\partial{\nOmoutuno}\cup\partial{\nOmoutdue})\cap \Om$.
We notice that, for $\eta\to 0$ and
$\eps>0$ fixed, $|\nOmint|\sim \eps\eta|\Memb|_{N-1}$.
Set also $Y= \nPeroutuno\cup \nPeroutdue\cup\nPerint\cup\nPermemb$
where $\nPeroutuno,\nPeroutdue$ and $\nPerint$ are disjoint open subsets of
$Y$, $\nPerint$ is the tubular neighborhood of
$\Permemb$ with thickness $\eta$, and $\nPermemb=(\partial \nPeroutuno\cup\partial\nPeroutdue)\cap Y$
(see Figure~\ref{fig:periods}).
For the sake of brevity, we will denote by $\nPerout$ the union $\nPeroutuno\cup \nPeroutdue$.
Finally, for $\eta\to 0$, $|\nPerint|\sim \eta |\Permemb|_{N-1}$.

\begin{figure}[htbp]
  \begin{center}
    \begin{pspicture}(12,7)
\rput(-1,0.5){
\psframe[linewidth=0.4pt,linestyle=dashed](0,0)(6,6)
\pscircle[fillstyle=solid,fillcolor=lightgray](3.5,3){2}
\pscircle[linestyle=dashed](3.5,3){1.5}
\pscircle[fillstyle=solid,fillcolor=white](3.5,3){1}
\psline[linewidth=0.4pt]{<->}(1.5,3)(2.5,3)
\psline[linewidth=0.4pt](1.75,3)(1,4)
\psline[linewidth=0.4pt](4.9192,4.4142)(6.5,5)
\psline[linewidth=0.4pt](4.2071,3.7071)(6.5,5)
\psline[linewidth=0.4pt](4.9095,3.513)(6.5,4)
\psline{->}(2.088,1.5858)(1.5223,1.0201)
\psline{->}(2.7929,2.2918)(3.3586,2.8575)
\rput[l](1,5){\mbox{$E^{\eta}_{\textup{out}}$}}
\rput[l](2.9,3.5){\mbox{$E^{\eta}_{\textup{int}}$}}
\rput[l](6.7,5){\mbox{$\partial \varGamma^{\eta}$}}
\rput[l](6.8,4){\mbox{$\varGamma$}}
\rput[r](0.8,4){\mbox{$\eta$}}
\rput[r](1.4223,.9201){\mbox{$\nu^{\eta}$}}
\rput[l](3.4586,2.9575){\mbox{$\nu^{\eta}$}}
\rput(8,0){
\psframe[linewidth=0.4pt,linestyle=dashed](0,0)(6,6)
\pscircle(3.5,3){1.5}
\psline[linewidth=0.4pt](2.0905,3.513)(-.5,4)
\psline{->}(2.4393,1.9393)(1.8736,1.3736)
\rput[r](1.7736,1.2736){\mbox{$\nu$}}
\rput(1,5){\mbox{$E_{\textup{out}}$}}
\rput(2.9,3.5){\mbox{$E_{\textup{int}}$}}
}
}
\end{pspicture}
    \caption{The periodic cell $Y$. \textsl{Left}: before concentration;
    $\varGamma^{\eta}$ is the shaded
    region, and $\nPerout=\nPeroutuno\cup \nPeroutdue$ is the white region.
    \textsl{Right}: after concentration; $\varGamma^{\eta}$
    shrinks to $\varGamma$ as $\eta\to0$.}
    \label{fig:periods}
  \end{center}
\end{figure}

\medskip

We stress the fact that the appearance of the two small parameters $\eta$ and $\eps$
 calls for two different limit procedures: we will first perform
a concentration of the thin membranes, in order to simplify the geometrical setting
of the microstructure, and then we will perform a homogenization limit, in order
to obtain a macroscopic model.

\subsection{Definition and main properties of the unfolding operator}\label{ss:unfold}

In this subsection, we define and collect some properties of a space-time version
of the space unfolding operator introduced and developed in \cite{Cioranescu:Damlamian:Griso:2002,
 Cioranescu:Damlamian:Griso:2008, Cioranescu:Donato:Zaki:2006B, Cioranescu:Donato:Zaki:2006A}
 (see also \cite{Amar:Gianni:2016A}).

A space-time version of the unfolding operator in a more general framework,
in which also a time-microscale is actually present,  has been introduced
in \cite{Amar:Andreucci:Bellaveglia:2015B} and \cite{Amar:Andreucci:Bellaveglia:2015A},
to which we also refer for a survey on this topic.

However, in the unfolding technique used here, the
time variable does not play any special role and can be treated essentially as
a parameter, hence most of the properties of this operator can be proven
essentially as in the above quoted papers and are therefore only recalled here.
An analogous remark is valid
for the other operators which will be introduced in the following.

For the sake of simplicity, for any spatial domain $G$, we
will denote by $G_{T}=G\times(0,T)$ the corresponding space--time
cylindrical domain over the time interval $(0,T)$.

Let us set
\begin{equation*}
  \Xi_{\eps}
  =
  \left\{\xi\in \mathbb{Z}^N\,, \quad \eps(\xi+Y)\subset\Om\right\}
  \,,
  \quad
  \intOset
  =
  \text{interior}\left\{\bigcup_{\xi\in \Xi_{\eps}} \eps(\xi+\overline{\cell})   \right\}
  \,,
  \end{equation*}
  \begin{equation*}
  \Om_T =\Om\times (0,T)
  \,,
  \quad
    \intspacetime
  =
  \intOset\times(0,T)
  \,.
\end{equation*}

Denoting by $[r]$ the integer part of $r\in\R$, we define for $x\in\RN$
\begin{equation*}
  \macropart{x}{\eps}
  =
  \Big(
  \intepart{\frac{x_{1}}{\eps}}
  ,
  \dots
  ,
  \intepart{\frac{x_N}{\eps}}
  \Big)
  \,,\quad\text{so that}\qquad
  x
  =
  \eps\left(\macropart{x}{\eps}+\micropart{x}{\eps}\right)
  \,.
\end{equation*}
Then, we introduce the space cell containing $x$ as $  \celloc{(x)} =  \eps\Big(\displaystyle\macropart{x}{\eps}
  +  \cell  \Big)$.
Finally, set $y_M=y-\int_Y y\di y$.

\begin{defin}\label{d:oldunfop}
For a Lebesgue-measurable $\genfun$ on $\Om_T$, the (time-depending) periodic
unfolding operator $\unfop$ is defined as
\begin{equation*}
  \unfop(\genfun)(x,t,y)
  =
  \left\{
    \begin{alignedat}2
      &\genfun \left(\eps\macropart{x}{\eps}+\eps y, t \right),
      &\quad&
      (x,t,y)\in \intspacetime\times\cell;
      \\
      &0 \,,
      &\quad&
      \text{otherwise.}
    \end{alignedat}
  \right.
\end{equation*}
For a Lebesgue-measurable $\genfun$ on $\Memb_T$, the (time-depending) boundary unfolding operator $\btsunfop$
is defined as
\begin{equation*}
  \btsunfop(\genfun)(x,t,y)
  =
  \left\{
    \begin{alignedat}2
      &\genfun \left(\eps\macropart{x}{\eps}+\eps y, t \right),
      &\quad&
      (x,t,y)\in \intspacetime\times\Permemb;
      \\
      &0 \,,
      &\quad&
      \text{otherwise.}
    \end{alignedat}
  \right.
\end{equation*}
\end{defin}

Clearly, for $\genfun_{1}$, $\genfun_{2}$ as in Definition~\ref{d:oldunfop}
\begin{equation}
  \label{eq:unfop_product}
  \unfop(\genfun_{1}\genfun_{2})
  =
  \unfop(\genfun_{1})
  \unfop(\genfun_{2})
  \,,
\end{equation}
and the same property holds for the boundary unfolding operator.
Note that $\btsunfop(\genfun)$ is the trace of the unfolding operator
on $\intspacetime\times\Permemb$, when both operators are defined.
\medskip

\begin{defin}\label{d:average}
For a Lebesgue-measurable $\genfun$ on $\Om_T$, the (time-depending) local average operator
is defined as
 \begin{equation}
   \label{eq:local_s}
   \average(\genfun)(x,t)
   =
   \left\{
   \begin{alignedat}2
     &\frac{1}{\eps^N}
     \int_{\celloc{(x)}}
     \genfun(\zeta,{t})
     \di \zeta
     \,,
     &\quad&
     \text{if}
     \,
     (x,t)\in
     \intspacetime
     \,,
     \\
     &0
     \,,
     &\quad&
     \text{otherwise.}
   \end{alignedat}
   \right.
 \end{equation}
\end{defin}
By a change of variable, it is not difficult to see that
$$
 \average(\genfun)(x,t)= {\mathcal M}_Y\big(\unfop(\genfun)\big)\,,
$$
where we denote by $\saverage(\cdot)$ the integral average on $Y$.

We collect here some properties of the operators defined above.

\begin{prop}
  \label{p:norms}
  The operator $\unfop:L^{2}(\Om_T)\to L^{2}(\Om_T\times\cell)$ is
  linear and continuous.
  In addition, we have
   \begin{equation}
   \label{eq:norm_bound}
   \norma{\unfop(\genfun)}{L^2(\Om_T\times\cell)}
   \le
   \norma{\genfun}{L^2(\Om_T)}
   \,,
  \end{equation}
  and
  \begin{equation}
    \label{eq:norms_n}
    \Abs{
      \int_{\Om_T}
      \genfun
      \di x
      \di t
      -
      \iint_{\Om_T\times\cell}
      \unfop(\genfun)
  \di y
      \di x
      \di t
    }
    \le
    \int_{\Om_T\setminus\intspacetime}
    \abs{\genfun}
    \di x
    \di t
    \,.
  \end{equation}
\end{prop}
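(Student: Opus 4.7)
The plan is to exploit the fact that on each cell $\eps(\xi+\cell)$ with $\xi\in\Xi_{\eps}$ the function $\unfop(\genfun)(x,t,y)$ is constant in $x$, and vanishes on $\Om_T\setminus\intspacetime$ by definition. First I would observe that the cells $\{\eps(\xi+\cell)\}_{\xi\in\Xi_{\eps}}$ tile $\intOset$ up to a null set (this uses $|\Permemb\cap\partial\cell|_{N-1}=0$, though for these pure Lebesgue integrals only the fact that $\partial\cell$ is $N$-null is needed), and that for $x\in\eps(\xi+\cell)$ one has $\macropart{x}{\eps}=\xi$, hence $\unfop(\genfun)(x,t,y)=\genfun(\eps\xi+\eps y,t)$ is independent of the $x$-variable within that cell.

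Next I would compute the squared $L^{2}$-norm by splitting over the cells:
\begin{equation*}
\iint_{\Om_T\times\cell}\abs{\unfop(\genfun)}^{2}\di y\di x\di t
=\sum_{\xi\in\Xi_{\eps}}\int_{0}^{T}\!\int_{\eps(\xi+\cell)}\!\int_{\cell}\abs{\genfun(\eps\xi+\eps y,t)}^{2}\di y\di x\di t.
\end{equation*}
Since the integrand does not depend on $x$, the inner $x$-integration contributes a factor $\eps^{N}=|\eps(\xi+\cell)|$, and the change of variable $z=\eps\xi+\eps y$ (with Jacobian $\eps^{N}$) converts $\eps^{N}\int_{\cell}|\genfun(\eps\xi+\eps y,t)|^{2}\di y$ into $\int_{\eps(\xi+\cell)}|\genfun(z,t)|^{2}\di z$. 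Summing over $\xi\in\Xi_{\eps}$ gives exactly $\int_{\intspacetime}|\genfun|^{2}\di x\di t\leq\norma{\genfun}{L^{2}(\Om_T)}^{2}$, which proves \eqref{eq:norm_bound}; linearity is immediate from the pointwise definition.

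For \eqref{eq:norms_n} I would repeat the identical decomposition with $\genfun$ in place of $|\genfun|^{2}$, obtaining the identity
\begin{equation*}
\iint_{\Om_T\times\cell}\unfop(\genfun)\di y\di x\di t=\int_{\intspacetime}\genfun\di x\di t.
\end{equation*}
Subtracting this from $\int_{\Om_T}\genfun\di x\di t$ leaves the integral over $\Om_T\setminus\intspacetime$, and taking absolute values yields the stated bound.

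The main (rather mild) obstacle is the bookkeeping of the cell partition: one must verify that the translated cells $\eps(\xi+\cell)$, $\xi\in\Xi_{\eps}$, are pairwise disjoint up to a null set and exactly exhaust $\intOset$, so that the sum of the cell-wise integrals equals the integral over $\intspacetime$, and that the complement $\Om_T\setminus\intspacetime$ is the ``boundary layer'' of cells meeting $\partial\Om$ which the definition of $\unfop$ excludes. Everything else is a direct application of Fubini and the affine change of variables $z=\eps\xi+\eps y$ on each cell.
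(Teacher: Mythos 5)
Your argument is correct and is precisely the standard ``exact integration'' proof of these unfolding properties: the paper itself does not prove Proposition~\ref{p:norms} but recalls it from \cite{Cioranescu:Damlamian:Griso:2002,Cioranescu:Damlamian:Griso:2008}, where the proof proceeds exactly as you do, via the cell decomposition of $\intOset$, the constancy of $\unfop(\genfun)(\cdot,t,y)$ on each cell $\eps(\xi+\cell)$, and the change of variables $z=\eps\xi+\eps y$ yielding $\iint_{\Om_T\times\cell}\unfop(\genfun)\di y\di x\di t=\int_{\intspacetime}\genfun\di x\di t$. Your bookkeeping of the tiling and of the boundary layer $\Om_T\setminus\intspacetime$ is the right (and only) point requiring care, and you have handled it correctly.
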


\begin{prop}  \label{p:convergences}
  Let $\{\genfune\}$ be a sequence of functions in $L^2(\Om_T)$.
  \\
  If $\genfune\to\genfun$ strongly in $L^2(\Om_T)$ as $\eps\to 0$,
  then
  \begin{equation}
    \label{eq:strong_conv}
    \unfop(\genfune)\to
    \genfun
    \,,
    \quad
    \text{strongly in}
    \,
    L^2(\Om_T \times\cell)
    \,.
    \end{equation}
    If $\genfune\to\genfun$ strongly in $L^2(0,T;H^1(\Om))$ as $\eps\to 0$,
  then
  \begin{equation}
    \label{eq:a23}
     \frac{\unfop(\genfune)-\average(\genfune)}{\eps}\to
    y_M\cdot\nabla\genfun
    \,,
    \quad
    \text{strongly in}
    \,
    L^2(\Om_T)
    \,.
  \end{equation}
  If $\genfune$ is a bounded sequence of functions in $L^2(\Om_T)$,
  then, up to a subsequence
  \begin{equation}
    \label{eq:weak_conv}
    \unfop(\genfune)
    \wto
    \hatfun
    \,,
    \quad
    \text{weakly in}
    \,
    L^2\left(\Om_T\times\cell\right)
    \,,
  \end{equation}
  and
  \begin{equation}
  \label{eq:weak_conv_ii}
     \genfune
     \wto
     \saverage(\hatfun)
     \,,
   \quad
   \text{weakly in}
   \,
   L^2(\Om_T)
   \,.
  \end{equation}
\end{prop}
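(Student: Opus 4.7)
The plan is to prove the four claims in turn, leaning on the norm estimates of Proposition~\ref{p:norms} together with density arguments and a Taylor expansion for the corrector statement. Throughout, I will exploit the fundamental identity $x = \eps\macropart{x}{\eps}+\eps\micropart{x}{\eps}$, which implies in particular that for any $\genfun$ on $\Om_T$ one has $\average(\genfun)(x,t) = \saverage\bigl(\unfop(\genfun)(x,t,\cdot)\bigr)$.

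For the first assertion \eqref{eq:strong_conv}, the strategy is a standard density argument. By linearity and the continuity estimate \eqref{eq:norm_bound}, it is enough to check the claim for $\genfun \in \CC^\infty(\overline{\Om_T})$: indeed, given such a test function $\fun$, decompose
\begin{equation*}
\unfop(\genfune) - \genfun = \unfop(\genfune - \fun) + \bigl(\unfop(\fun)-\fun\bigr) + (\fun-\genfun),
\end{equation*}
and bound the first term in $L^2(\Om_T\times\cell)$ via \eqref{eq:norm_bound}, the last via the strong convergence $\genfune\to\genfun$, and choose $\fun$ close to $\genfun$. For the middle term, on $\intspacetime\times\cell$ one has $\unfop(\fun)(x,t,y)-\fun(x,t)=\fun\bigl(\eps\macropart{x}{\eps}+\eps y,t\bigr)-\fun(x,t)$, whose modulus is bounded uniformly by $\eps \sqrt{N}\,\norma{\nabla\fun}{\oo}$; outside $\intspacetime$ one simply uses \eqref{eq:norms_n} and the fact that $|\Om_T\setminus\intspacetime|\to0$.

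For the corrector formula \eqref{eq:a23}, the same density scheme reduces matters to smooth $\fun$. Writing $\eps\macropart{x}{\eps} = x - \eps\micropart{x}{\eps}$ and Taylor-expanding in the spatial variable,
\begin{equation*}
\fun\bigl(\eps\macropart{x}{\eps}+\eps y,t\bigr) = \fun(x,t) + \eps\nabla\fun(x,t)\cdot\bigl(y-\micropart{x}{\eps}\bigr) + O(\eps^2),
\end{equation*}
uniformly in $y\in\cell$. Averaging in $y\in\cell$ produces $\average(\fun)(x,t) = \fun(x,t) + \eps\nabla\fun(x,t)\cdot\bigl(\int_\cell y\di y - \micropart{x}{\eps}\bigr)+O(\eps^2)$, and subtracting yields $\bigl[\unfop(\fun)-\average(\fun)\bigr]/\eps \to \nabla\fun(x,t)\cdot y_M$ strongly. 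The passage from smooth to general $\genfun\in L^2(0,T;H^1(\Om))$ uses the obvious linearity of $\unfop$ and $\average$, together with \eqref{eq:norm_bound} applied to the gradient and a standard estimate $\norma{\average(\genfun)}{L^2(\Om_T)}\le\norma{\genfun}{L^2(\Om_T)}$. The main technical nuisance here is the boundary layer $\Om_T\setminus\intspacetime$, on which both $\unfop$ and $\average$ vanish by definition; its contribution is controlled by \eqref{eq:norms_n} together with the regularity of $\partial\Om$.

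For \eqref{eq:weak_conv}, the bound \eqref{eq:norm_bound} gives that $\{\unfop(\genfune)\}$ is bounded in the Hilbert space $L^2(\Om_T\times\cell)$, whence a weakly convergent subsequence $\unfop(\genfune)\wto\hatfun$ exists.

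Finally, \eqref{eq:weak_conv_ii} is obtained by testing against $\fun\in\CC_c^\infty(\Om_T)$. Applying \eqref{eq:unfop_product} and \eqref{eq:norms_n} to the product $\genfune\fun$,
\begin{equation*}
\Abs{\int_{\Om_T}\genfune\fun\di x\di t - \iint_{\Om_T\times\cell}\unfop(\genfune)\unfop(\fun)\di y\di x\di t} \le \int_{\Om_T\setminus\intspacetime}|\genfune\fun|\di x\di t,
\end{equation*}
and the right-hand side tends to zero because $\genfune$ is bounded in $L^2(\Om_T)$ while $|\Om_T\setminus\intspacetime|\to0$. On the left, the first term converges to $\int_{\Om_T}\saverage(\hatfun)\fun$ by combining the strong convergence $\unfop(\fun)\to\fun$ from part (1) with the weak convergence of $\unfop(\genfune)$ and Fubini, which identifies the weak limit of $\genfune$ in $L^2(\Om_T)$ as $\saverage(\hatfun)$. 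I expect the Taylor-expansion step in the corrector claim to be the most delicate piece, since one must track error terms uniformly in both $y$ and $\eps\micropart{x}{\eps}$ while handling the defect region where $\unfop$ is set to zero.
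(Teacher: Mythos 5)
The paper does not actually prove this proposition; it is recalled from the unfolding literature (Cioranescu--Damlamian--Griso), and your argument follows the same standard route: density plus the contraction estimate \eqref{eq:norm_bound} for \eqref{eq:strong_conv}, Taylor expansion for \eqref{eq:a23}, weak compactness for \eqref{eq:weak_conv}, and duality against smooth test functions for \eqref{eq:weak_conv_ii}. Parts one, three and four are correct as written.

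The gap is in the density step for \eqref{eq:a23}. A density argument only works if the operator you are approximating is bounded \emph{uniformly in $\eps$}, and the operator here is $\genfun\mapsto\eps^{-1}\bigl(\unfop(\genfun)-\average(\genfun)\bigr)$. The two estimates you invoke, namely $\norma{\unfop(v)}{L^2(\Om_T\times\cell)}\le\norma{v}{L^2(\Om_T)}$ and $\norma{\average(v)}{L^2(\Om_T)}\le\norma{v}{L^2(\Om_T)}$, applied separately to $v=\genfun-\fun$ only give the bound $2\eps^{-1}\norma{\genfun-\fun}{L^2}$, which blows up and so cannot close the three-term decomposition. The missing ingredient is a cell-wise Poincar\'e--Wirtinger inequality: on $\intspacetime$ the function $y\mapsto\unfop(v)(x,t,y)-\average(v)(x,t)$ has zero mean over $\cell$ and satisfies $\nabla_y\unfop(v)=\eps\,\unfop(\nabla v)$, hence
\begin{equation*}
\norma{\unfop(v)-\average(v)}{L^2(\cell)}\le C_N\,\norma{\nabla_y\unfop(v)}{L^2(\cell)}=C_N\,\eps\,\norma{\unfop(\nabla v)}{L^2(\cell)},
\end{equation*}
and integrating in $(x,t)$ and using \eqref{eq:norm_bound} for $\nabla v$ gives the uniform bound $\norma{\eps^{-1}(\unfop(v)-\average(v))}{L^2(\Om_T\times\cell)}\le C_N\norma{\nabla v}{L^2(\Om_T)}$. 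With this estimate in hand your Taylor-expansion computation for smooth $\fun$ (which is correct) does transfer to general $\genfun\in L^2(0,T;H^1(\Om))$. Two smaller remarks: the limit $y_M\cdot\nabla\genfun$ depends on $y$, so the convergence in \eqref{eq:a23} is really in $L^2(\Om_T\times\cell)$ (the statement's ``$L^2(\Om_T)$'' is a slip you inherit silently); and in the first part the control of $\unfop(\fun)-\fun$ on $\Om_T\setminus\intspacetime$ needs only $|\Om_T\setminus\intspacetime|\to0$ and the boundedness of $\fun$, not \eqref{eq:norms_n}.
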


\begin{remark}\label{r:r6}
In particular, if $\genfun\in L^2(\Om_T)$, we get that $\unfop(\genfun)\to\genfun$,
for $\eps\to 0$, strongly in $L^2(\Om_T\times Y)$.
\end{remark}

\begin{remark}
  \label{r:fcapac_str_conv}
  We note that the only cases in which \eqref{eq:strong_conv} it is known to hold
  without assuming the strong convergence of
the sequence $\{\genfune\}$ is when $\genfune(x,t)=\fun(x,t,\eps^{-1}x)$ where $\fun$ corresponds to one of the
  following cases (or sum of them):
  $\fun(x,t,y)=f_1(x,t)f_2(y)$, with $f_1f_2\in L^1(\Om_T\times\cell)$,
  $\fun\in L^1(\cell;\CC(\Om_T))$, $\fun\in
  L^1(\Om_T;\CC(\cell))$. In all such cases we have
  $\unfop(\genfune)\to\phi$ strongly in $L^2(\Om_T\times\cell)$ (see, for instance,  \cite{Allaire:1992,Cioranescu:Damlamian:Griso:2002,Cioranescu:Damlamian:Griso:2008}
  and \cite[Remark 2.9]{Amar:Andreucci:Bellaveglia:2015A}).
\end{remark}

\begin{prop}
  \label{p:ts_norms}
  The operator $\btsunfop:L^{2}(\Memb_T)\to L^{2}(\Om_T\times\Permemb)$ is
  linear and continuous.
  In addition, we have
   \begin{equation}
   \label{eq:ts_norm_bound}
   \norma{\btsunfop(\genfun)}{L^2(\Om_T\times\Permemb)}
   \le   {\sqrt\eps}\norma{\genfun}{L^2(\Memb_T)}
   \,,
  \end{equation}
  and
  \begin{equation}
    \label{eq:ts_norms_n}
      \int_{\Memb_T}
      \genfun
      \di \sigma
      \di t
      =
      \frac{1}{\eps}
      \int_{\Om_T\times\Permemb}
      \btsunfop(\genfun)
      \di \sigma
      \di x
      \di t \,.
  \end{equation}
\end{prop}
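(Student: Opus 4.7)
Linearity of $\btsunfop$ is immediate from its pointwise definition, so the content of the statement is the quantitative estimate \eqref{eq:ts_norm_bound} and the averaging identity \eqref{eq:ts_norms_n}. My plan is to establish \eqref{eq:ts_norms_n} first by direct computation, cell by cell, and then deduce \eqref{eq:ts_norm_bound} (hence continuity) by applying the same computation to $\abs{\genfun}^2$, picking up the factor $\eps$ coming from the mismatch between bulk and surface scaling.

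First I would decompose $\Om_T\times\Permemb$ according to the cells with index in $\Xi_\eps$: on $\eps(\xi+Y)\times(0,T)\times\Permemb$ the definition gives
\begin{equation*}
  \btsunfop(\genfun)(x,t,y)=\genfun(\eps\xi+\eps y,t),
\end{equation*}
which is independent of $x$. Integrating in $x\in\eps(\xi+Y)$ yields the factor $\eps^N$, so that
\begin{equation*}
  \int_{\Om_T\times\Permemb}\btsunfop(\genfun)\di\sigma\di x\di t
  =\eps^N\sum_{\xi\in\Xi_\eps}\int_0^T\!\!\int_\Permemb\genfun(\eps\xi+\eps y,t)\di\sigma(y)\di t.
\end{equation*}
Next I perform the change of variable $z=\eps\xi+\eps y$ on each piece $\eps(\xi+\Permemb)\subset\Memb$. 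Since the map acts tangentially on an $(N-1)$-dimensional surface by a pure dilation of factor $\eps$, the surface measures transform by $\di\sigma(y)=\eps^{-(N-1)}\di\sigma(z)$. The prefactor $\eps^N\cdot\eps^{-(N-1)}=\eps$ emerges, and summing over $\xi\in\Xi_\eps$ reassembles the pieces into the full interface portion $\Memb\cap\intOset$, giving
\begin{equation*}
  \int_{\Om_T\times\Permemb}\btsunfop(\genfun)\di\sigma\di x\di t
  =\eps\int_0^T\!\!\int_{\Memb\cap\intOset}\genfun\di\sigma\di t.
\end{equation*}
The geometric assumptions in Subsection~\ref{ss:geometric} (cells intersecting $\partial\Om$ contain no inclusion in the connected/disconnected case; the condition $\dist(\Memb,\partial\Om)\ge\gamma_0\eps$ in the connected/connected case) ensure that $\Memb\subset\intOset$, so the last integral coincides with $\int_{\Memb_T}\genfun\di\sigma\di t$ and \eqref{eq:ts_norms_n} follows.

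The norm estimate is then a one-line consequence: applying the very same cell-by-cell computation to $\abs{\genfun}^2$ (which is a nonnegative measurable function on $\Memb_T$) and using $\Memb\cap\intOset\subset\Memb$ in the step above gives
\begin{equation*}
  \norma{\btsunfop(\genfun)}{L^2(\Om_T\times\Permemb)}^2
  =\eps\int_0^T\!\!\int_{\Memb\cap\intOset}\abs{\genfun}^2\di\sigma\di t
  \le\eps\norma{\genfun}{L^2(\Memb_T)}^2,
\end{equation*}
which is exactly \eqref{eq:ts_norm_bound} and simultaneously yields the continuity of $\btsunfop$ from $L^2(\Memb_T)$ to $L^2(\Om_T\times\Permemb)$.

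The only real subtlety I expect is bookkeeping at the boundary of $\Om$: one must be sure that no part of $\Memb$ lies in cells that are cut by $\partial\Om$, since $\btsunfop$ is defined to vanish outside $\intspacetime$ and such parts would only contribute to the right-hand side of \eqref{eq:ts_norms_n}. This is precisely why the geometric setup in Subsection~\ref{ss:geometric} has been arranged as it is, so the two conventions match. Everything else is a mechanical cell decomposition plus the surface-scaling Jacobian $\eps^{N-1}$, and the factor $\sqrt{\eps}$ in \eqref{eq:ts_norm_bound} is the expected surface-concentration scaling.
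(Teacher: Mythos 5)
Your argument is correct and is exactly the standard cell-by-cell computation (bulk factor $\eps^{N}$ against the surface Jacobian $\eps^{N-1}$, giving the factor $\eps$ and hence $\sqrt{\eps}$ in the $L^2$ bound); the paper does not reprove this but recalls it from the unfolding literature, adding only the remark that the geometric setup guarantees $\Memb_T\setminus\intspacetime=\emptyset$, which is precisely the boundary subtlety you identify and resolve. No gaps.
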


Note that \eqref{eq:ts_norms_n} holds since we can choose $\const_0$
in Subsection \ref{ss:geometric} in such a way that
$\Memb_T\setminus\intspacetime=\emptyset$.

\begin{prop}\label{p:p2}\cite[Proposition 5]{Amar:Gianni:2016A}
Assume that $\genfune\wto\genfun$ weakly in $L^2(0,T;H^1_0(\Om))$. Then,
$$
\btsunfop(\genfune)\wto \genfun\,,\qquad\text{ weakly in $L^2(\Om_T\times\Permemb)$.}
$$
\end{prop}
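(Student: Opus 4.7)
The strategy is to promote the weak $H^1_0$-bound on $\genfune$ to a weak convergence of $\unfop(\genfune)$ in $L^2(\Om_T;H^1(Y))$ with a $y$-independent limit, and then transfer this convergence to $\Permemb$ by continuity of the trace.

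First I would establish the bulk convergence of the interior unfolding. Since $\genfune$ is bounded in $L^2(0,T;H^1_0(\Om))$, Proposition~\ref{p:norms} gives $\unfop(\genfune)$ bounded in $L^2(\Om_T\times Y)$. The chain rule for the unfolding operator yields
\begin{equation*}
\nabla_y \unfop(\genfune)(x,t,y)=\eps\, \unfop(\nabla\genfune)(x,t,y),
\end{equation*}
so that $\nabla_y \unfop(\genfune)\to 0$ strongly in $L^2(\Om_T\times Y)^N$. Hence $\unfop(\genfune)$ is bounded in $L^2(\Om_T;H^1(Y))$, and up to a subsequence $\unfop(\genfune)\wto \hatfun$ weakly in that space, with $\nabla_y\hatfun=0$, i.e.\ $\hatfun=\hatfun(x,t)$ is independent of $y$. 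By \eqref{eq:weak_conv_ii} of Proposition~\ref{p:convergences} and the weak convergence $\genfune\wto\genfun$ in $L^2(\Om_T)$, one identifies $\hatfun=\saverage(\hatfun)=\genfun$.

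Next, I would pass to the trace. The classical trace operator $\tau:H^1(Y)\to L^2(\Permemb)$ is linear and continuous, and its canonical space-time lifting $L^2(\Om_T;H^1(Y))\to L^2(\Om_T\times\Permemb)$ preserves weak convergence. By the very definition of the two unfolding operators in Definition~\ref{d:oldunfop}, we have $\tau\big(\unfop(\genfune)\big)=\btsunfop(\genfune)$ a.e.\ on $\intspacetime\times\Permemb$. Applying $\tau$ to the weak convergence $\unfop(\genfune)\wto \genfun$ therefore gives
\begin{equation*}
\btsunfop(\genfune)\wto \tau(\genfun)=\genfun \quad \text{weakly in } L^2(\Om_T\times\Permemb),
\end{equation*}
where in the right-hand side $\genfun$ is viewed as constant in $y$. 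Since the limit is independent of the extracted subsequence, the whole sequence converges.

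Finally, I would account for the thin border region $\Om_T\setminus\intspacetime$ where $\btsunfop(\genfune)\equiv 0$ by definition: for any test $\fun\in L^2(\Om_T\times\Permemb)$ the contribution of the integral of $\genfun\fun$ on that region vanishes as $\eps\to 0$ by absolute continuity of the Lebesgue integral together with $|\Om_T\setminus\intspacetime|\to 0$. The expected main obstacle is the correct identification of the trace of $\unfop(\genfune)$ with $\btsunfop(\genfune)$ on the full product space $\Om_T\times\Permemb$ (rather than merely on $\intspacetime\times\Permemb$), but this is resolved by the vanishing-measure argument above; once that is secured, the remaining steps are routine applications of Propositions~\ref{p:norms} and~\ref{p:convergences}.
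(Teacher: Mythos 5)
Your proof is correct, and it follows what is essentially the standard route to this fact (the paper itself gives no proof here, citing \cite[Proposition 5]{Amar:Gianni:2016A}): bound $\unfop(\genfune)$ in $L^2(\Om_T;H^1(Y))$ via the scaling identity $\nabla_y\unfop(\genfune)=\eps\,\unfop(\nabla\genfune)$, deduce that the weak limit is $y$-independent and equals $\genfun$ by \eqref{eq:weak_conv_ii}, and transfer the convergence to $\Permemb$ by weak continuity of the trace, using that $\btsunfop$ is the trace of $\unfop$. Your closing paragraph about the border region $\Om_T\setminus\intspacetime$ is actually superfluous: since $\unfop(\genfune)(x,t,\cdot)\equiv 0$ there, its trace is $0=\btsunfop(\genfune)(x,t,\cdot)$, so the identification $\tau(\unfop(\genfune))=\btsunfop(\genfune)$ holds on all of $\Om_T\times\Permemb$ and no separate vanishing-measure argument is needed.
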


Finally, we state some results which will be mainly used when we deal with testing functions.
\bigskip

\begin{prop}\label{c:c1}\cite[Corollary 1]{Amar:Gianni:2016A}
 Let $\genfun$ be a function belonging to
  $L^2\big(0,T;H^1(\Om)\big)$. Then, as $\eps\to 0$,
    \begin{equation}
     \label{eq:a82}
   \btsunfop(\genfun)\to
   \genfun
   \,,
   \quad
   \text{strongly in}
   \,
    L^2\left(\Om_T\times\Permemb\right)\,.
  \end{equation}
\end{prop}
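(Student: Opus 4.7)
The natural strategy is a two-step density argument: verify the convergence directly for regular $\varphi$, then bootstrap to $L^2(0,T;H^1(\Om))$ via the continuity estimate \eqref{eq:ts_norm_bound} of $\btsunfop$ combined with a uniform scaled-trace inequality on the periodic interface $\Memb$.

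For the smooth case, fix $\varphi\in\CC^1(\overline\Om\times[0,T])$ and write, for $(x,t,y)\in\intspacetime\times\Permemb$,
\[
  \btsunfop(\varphi)(x,t,y)-\varphi(x,t)=\varphi\!\left(\eps\macropart{x}{\eps}+\eps y,\,t\right)-\varphi(x,t).
\]
Since $x=\eps\bigl(\macropart{x}{\eps}+\micropart{x}{\eps}\bigr)$, one has $|\eps\macropart{x}{\eps}+\eps y-x|=\eps\bigl|y-\micropart{x}{\eps}\bigr|\le 2\sqrt{N}\,\eps$, so the pointwise difference is bounded by $2\sqrt{N}\,\eps\|\nabla\varphi\|_{\infty}$. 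On the complement $(\Om_T\setminus\intspacetime)\times\Permemb$, $\btsunfop(\varphi)$ vanishes while $\varphi$ is uniformly bounded; the Lebesgue measure of this complement is $O(\eps)$ since only the boundary cells are omitted from $\intOset$. Summing the two contributions yields $\|\btsunfop(\varphi)-\varphi\|_{L^2(\Om_T\times\Permemb)}\to 0$.

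For a general $\varphi\in L^2(0,T;H^1(\Om))$, choose a smooth approximation $\varphi_\delta\in\CC^1(\overline\Om\times[0,T])$ with $\|\varphi-\varphi_\delta\|_{L^2(0,T;H^1(\Om))}<\delta$ and split by the triangle inequality,
\[
 \|\btsunfop(\varphi)-\varphi\|_{L^2(\Om_T\times\Permemb)}\le \|\btsunfop(\varphi-\varphi_\delta)\|_{L^2(\Om_T\times\Permemb)}+\|\btsunfop(\varphi_\delta)-\varphi_\delta\|_{L^2(\Om_T\times\Permemb)}+|\Permemb|^{1/2}\|\varphi_\delta-\varphi\|_{L^2(\Om_T)}.
\]
The middle term vanishes with $\eps$ by the smooth case above, and the third is $\le C\delta$. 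By \eqref{eq:ts_norm_bound} the first term is controlled by $\sqrt{\eps}\,\|\varphi-\varphi_\delta\|_{L^2(\Memb_T)}$, and the real obstacle is making this $\delta$-small uniformly in $\eps$. For this I would invoke the classical $\eps$-scaled trace inequality
\[
 \eps\int_{\Memb}|\psi|^2\di\sigma\le C\bigl(\|\psi\|_{L^2(\Om)}^2+\eps^2\|\nabla\psi\|_{L^2(\Om)}^2\bigr)\le C\|\psi\|_{H^1(\Om)}^2,
\]
valid for every $\psi\in H^1(\Om)$ with $C$ independent of $\eps$; it is established by pulling back each cell to $Y$, applying the ordinary trace theorem on $\Permemb$, and summing over cells in $\intOset$. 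Applied to $\psi=\varphi(\cdot,t)-\varphi_\delta(\cdot,t)$ and integrated in $t$, it yields $\sqrt{\eps}\|\varphi-\varphi_\delta\|_{L^2(\Memb_T)}\le C\delta$. Letting first $\eps\to 0$ and then $\delta\to 0$ completes the argument.
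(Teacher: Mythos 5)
Your argument is correct and is essentially the standard proof of this statement (which the paper does not reproduce, citing \cite[Corollary 1]{Amar:Gianni:2016A} instead): pointwise control for smooth functions via $|\eps\macropart{x}{\eps}+\eps y-x|\le\sqrt{N}\eps$ together with the measure-$O(\eps)$ boundary layer $\Om_T\setminus\intspacetime$, and a density argument closed by combining the operator bound \eqref{eq:ts_norm_bound} with the $\eps$-scaled trace inequality $\eps\int_{\Memb}|\psi|^2\di\sigma\le C\norma{\psi}{H^1(\Om)}^2$. The only point worth flagging is that in the connected/connected case the cells in the layer $\gamma_0\eps\le\dist(x,\partial\Om)\le\gamma_1\eps$ are not exact rescaled copies of $(Y,\Permemb)$, so one must note that the paper's stipulation that the modified interface stays uniformly regular with $\dist(\Memb,\partial\Om)\ge\gamma_0\eps$ keeps the cell-by-cell trace constant uniform there.
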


\begin{prop}
  \label{p:per_odc_fun}
  Let $\fun:\cell\to \R$ be a function extended by $\cell$-periodicity to the
  whole of $\RN$ and define the sequence
  \begin{equation}\label{eq:a78}
    \fun^\eps(x)
    =
    \fun\left(\frac{x}{\eps}\right)
    \,,
    \qquad
    x\in \RN
    \,.
  \end{equation}
 If $\phi$ is measurable on $\cell$, then
  \begin{equation}
    \label{eq:per_osc_fun}
    \unfop(\fun^\eps)(x,y)
    =
    \left\{
      \begin{alignedat}2
        &\fun(y)
        \,,
        &\qquad&
        (x,y)\in \intOset\times\cell
        \,,
        \\
        &0 \,,
        &\qquad &
        \text{otherwise.}
      \end{alignedat}
    \right.
  \end{equation}
 Analogously,  if $\fun$ is measurable on $\Permemb$, then
  \begin{equation}
    \label{eq:a80}
    \btsunfop(\fun^\eps)(x,y)
    =
    \left\{
      \begin{alignedat}2
        &\fun(y)
        \,,
        &\qquad&
        (x,y)\in \intOset\times\Permemb
        \,,
        \\
        &0 \,,
        &\qquad &
        \text{otherwise.}
      \end{alignedat}
    \right.
  \end{equation}
   Moreover, if $\fun\in L^2(\cell)$, as $\eps\to 0$,
  \begin{equation}
    \label{eq:per_osc_fun_ii}
    \unfop(\fun^\eps)
    \to
    \fun
    \,,
    \qquad
    \text{strongly in}
    \,
    L^2(\Om\times\cell)
    \,;
  \end{equation}
  if $\fun\in L^2(\Permemb)$, as $\eps\to 0$,
  \begin{equation}
    \label{eq:a83}
    \btsunfop(\fun^\eps)
    \to
    \fun
    \,,
    \qquad
    \text{strongly in}
    \,
    L^2(\Om\times\Permemb)
    \,;
    \end{equation}
    if $\fun\in H^{1}(\cell)$, as $\eps\to 0$,
      \begin{gather}
    \label{eq:unf_y_grad_per}
    \nabla_{y}(\unfop(\fun^\eps))
    \to
    \nabla_{y}\fun
    \,,
    \quad
    \text{strongly in}
    \,
    L^2(\Om\times\cell)\,.
  \end{gather}
\end{prop}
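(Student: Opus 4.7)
My plan is to prove Proposition \ref{p:per_odc_fun} by first establishing the two pointwise identities \eqref{eq:per_osc_fun} and \eqref{eq:a80} and then deducing the three strong convergence statements as direct consequences.

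For \eqref{eq:per_osc_fun}, I start from the definition of $\unfop$. Given $(x,y)\in\intOset\times\cell$, we have
\begin{equation*}
\unfop(\fun^\eps)(x,y)=\fun^\eps\Big(\eps\macropart{x}{\eps}+\eps y\Big)
=\fun\Big(\macropart{x}{\eps}+y\Big).
\end{equation*}
Since $\macropart{x}{\eps}\in\ZZ^N$ and $\fun$ is extended by $\cell$-periodicity, the right-hand side equals $\fun(y)$. Off $\intOset\times\cell$ the unfolding operator is zero by construction. The derivation of \eqref{eq:a80} for $\btsunfop$ is identical, relying on the fact that $\Permemb$ is shared by neighbouring periodic copies; no extra care is needed because $y\in\Permemb\subset\overline{\cell}$.

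To obtain the strong convergences \eqref{eq:per_osc_fun_ii} and \eqref{eq:a83}, I use these identities to write $\unfop(\fun^\eps)(x,y)=\fun(y)\chi_{\intOset}(x)$ and $\btsunfop(\fun^\eps)(x,y)=\fun(y)\chi_{\intOset}(x)$ (for $y\in\Permemb$ in the latter case). Hence
\begin{equation*}
\norma{\unfop(\fun^\eps)-\fun}{L^2(\Om\times\cell)}^{2}
=\norma{\fun}{L^2(\cell)}^{2}\cdot\abs{\Om\setminus\intOset},
\end{equation*}
and since $|\Om\setminus\intOset|\to 0$ as $\eps\to 0$ (the union of cells needed to cover $\Om\setminus\intOset$ fits into an $O(\eps)$-tubular neighbourhood of $\partial\Om$), this yields \eqref{eq:per_osc_fun_ii}. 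The same argument, with $L^2(\Permemb)$ replacing $L^2(\cell)$, gives \eqref{eq:a83}.

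For the gradient statement \eqref{eq:unf_y_grad_per}, I differentiate in $y$ the identity $\unfop(\fun^\eps)(x,y)=\fun^\eps(\eps\macropart{x}{\eps}+\eps y)$ via the chain rule; the factor $\eps$ from the chain rule cancels the factor $\eps^{-1}$ coming from $\nabla\fun^\eps(\cdot)=\eps^{-1}(\nabla\fun)(\cdot/\eps)$, so that
\begin{equation*}
\nabla_{y}\unfop(\fun^\eps)(x,y)=(\nabla\fun)\Big(\macropart{x}{\eps}+y\Big)=(\nabla\fun)(y),
\end{equation*}
again by periodicity, on $\intOset\times\cell$ and zero elsewhere. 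Since $\fun\in H^{1}(\cell)$, this calculation is justified in the weak sense because the translation $y\mapsto\macropart{x}{\eps}+y$ stays within a single periodic cell so no jumps at cell interfaces are crossed. Then the same measure-of-the-complement argument as above gives the strong $L^2(\Om\times\cell)$ convergence.

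The only genuine subtlety is making sense of the gradient identity when $\fun$ is only in $H^1(\cell)$ (rather than $H^{1}_{\#}(\cell)$): I would treat it as the statement that $\nabla_{y}\unfop(\fun^\eps)=\chi_{\intOset}\cdot(\nabla\fun)$, which is what the verification above actually establishes cell by cell, and this is enough to pass to the $L^2$ limit. Everything else is a straightforward unpacking of Definitions \ref{d:oldunfop}.
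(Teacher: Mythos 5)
Your proof is correct and is precisely the standard argument; the paper itself states this proposition without proof, deferring to the unfolding literature (Cioranescu--Damlamian--Griso and \cite{Amar:Gianni:2016A}), and your computation $\unfop(\fun^\eps)(x,y)=\fun(\macropart{x}{\eps}+y)=\fun(y)$ plus the estimate $\norm{\unfop(\fun^\eps)-\fun}_{L^2(\Om\times\cell)}^2=\norm{\fun}_{L^2(\cell)}^2\,\abs{\Om\setminus\intOset}\to 0$ is exactly what those references do. The only remark worth making is that \eqref{eq:unf_y_grad_per} follows immediately from the already-established identity \eqref{eq:per_osc_fun} by differentiating $\fun(y)$ in $y$ on the open cell (so the chain-rule detour, and the worry about crossing cell interfaces, can be skipped entirely).
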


Now, let us state some properties concerning the behavior of the unfolding operator with respect to gradients.
To this aim, we denote by $H^1_\#(\cell)$ the space of those $\cell$-periodic
functions belonging to $H^1_{loc}(\RN)$.
\bigskip

\begin{thm}
  \label{t:smalleps_grad_weak_conv}
  Let $\{\genfune\}$ be a sequence converging weakly to $\genfun$ in
  $L^2\big(0,T;H^1_0(\Om)\big)$. Then, up to a subsequence, there
  exists $\tildefun=\tildefun(x,y,t)\in
  L^2\big(\Om_T;H^{1}_{\#}(\cell))$,
  $\saverage(\tildefun)=0$, such that, as $\eps\to 0$,
  \begin{alignat}{2}
     \label{eq:a17}
&    \unfop(\genfune)
    \wto
    \genfun
    \,,
 &   \quad
  &  \text{weakly in}
    \,
    L^2(\Om_T\times\cell)
    \,,
    \\
    \label{eq:smalleps_grad_weak_conv_i}
&    \unfop(\nabla\genfune)
    \wto
    \nabla_x\genfun
    +\nabla_y \tildefun
    \,,
 &   \quad
  &  \text{weakly in}
    \,
    L^2(\Om_T\times\cell)
    \,.
  \end{alignat}
  \end{thm}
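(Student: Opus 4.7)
The statement is the standard unfolding counterpart of the Nguetseng two--scale limit theorem for gradients; my plan is to follow the scheme of \cite{Cioranescu:Damlamian:Griso:2002,Cioranescu:Damlamian:Griso:2008}, exploiting the tools already recalled in Propositions \ref{p:norms}--\ref{p:convergences}. First I would use boundedness: since $\genfune\wto\genfun$ in $L^{2}(0,T;H^{1}_{0}(\Om))$, both $\{\genfune\}$ and $\{\nabla\genfune\}$ are bounded in $L^{2}(\Om_{T})$, so by Proposition \ref{p:norms} the sequences $\{\unfop(\genfune)\}$ and $\{\unfop(\nabla\genfune)\}$ are bounded in $L^{2}(\Om_{T}\times\cell)$. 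Extracting a subsequence, I obtain weak limits $\hatfun$ and $\xi$ respectively, and have to identify them.

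Second, I would identify the limit of $\unfop(\genfune)$. The key identity is $\nabla_{y}\unfop(\genfune)=\eps\,\unfop(\nabla\genfune)$, so that $\nabla_{y}\unfop(\genfune)\to 0$ strongly in $L^{2}(\Om_{T}\times\cell)$. Consequently $\nabla_{y}\hatfun=0$, i.e. $\hatfun$ is independent of $y$. Combining this with \eqref{eq:weak_conv_ii} in Proposition \ref{p:convergences}, namely $\genfune\wto\saverage(\hatfun)$ weakly in $L^{2}(\Om_{T})$, and with the hypothesis $\genfune\wto\genfun$, I conclude $\hatfun=\genfun$, which gives \eqref{eq:a17}. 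For the mean of the gradient limit, I apply the same proposition to $\nabla\genfune$: since $\saverage(\unfop(\nabla\genfune))=\average(\nabla\genfune)\wto\nabla\genfun$, I get $\saverage(\xi)=\nabla_{x}\genfun$.

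Third, I would construct the corrector $\tildefun$. Define
\begin{equation*}
v_{\eps}(x,t,y)\defeq\frac{\unfop(\genfune)(x,t,y)-\average(\genfune)(x,t)}{\eps}\,,
\end{equation*}
which satisfies $\saverage(v_{\eps})=0$ and $\nabla_{y}v_{\eps}=\unfop(\nabla\genfune)$, hence is bounded in $L^{2}(\Om_{T};H^{1}(\cell))$ by Poincar\'e--Wirtinger on $\cell$. Up to a subsequence $v_{\eps}\wto v$ weakly in $L^{2}(\Om_{T};H^{1}(\cell))$ with $\saverage(v)=0$ and $\nabla_{y}v=\xi$. Motivated by the strong--convergence formula \eqref{eq:a23}, I set
\begin{equation*}
\tildefun(x,t,y)\defeq v(x,t,y)-y_{M}\cdot\nabla_{x}\genfun(x,t)\,,
\end{equation*}
so that automatically $\saverage(\tildefun)=0$ and $\xi=\nabla_{y}v=\nabla_{x}\genfun+\nabla_{y}\tildefun$, which is precisely \eqref{eq:smalleps_grad_weak_conv_i} provided $\tildefun$ is $\cell$--periodic in $y$.

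The last step, which I expect to be the main technical obstacle, is the \emph{periodicity} of $\tildefun$ in $y$. The function $\unfop(\genfune)$ is not itself $\cell$--periodic in $y$, because on opposite faces of $\cell$ it involves the values of $\genfune$ on two adjacent $\eps$--cells; the key point is that the jump across the faces is controlled by quantities that vanish in the limit. I would follow the classical argument of \cite{Cioranescu:Damlamian:Griso:2002, Cioranescu:Damlamian:Griso:2008}: test \eqref{eq:smalleps_grad_weak_conv_i} against smooth $\cell$--periodic solenoidal fields $\Psi\in\CC^{\infty}_{\#}(\cell;\RN)$ with $\Div_{y}\Psi=0$ multiplied by $\phi\in\CC^{\infty}_{c}(\Om_{T})$, and integrate by parts in $y$ using $\unfop(\nabla\genfune)=\eps^{-1}\nabla_{y}\unfop(\genfune)$; periodicity of $\Psi$ cancels the interior boundary contributions up to a remainder which is controlled (via the bound $\|\unfop(\genfune)\|_{L^{2}}\le\|\genfune\|_{L^{2}}$ together with the vanishing measure of $\Om_{T}\setminus\intspacetime$) and tends to zero. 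Passing to the limit yields $\int\xi\cdot\Psi\,\phi=0$ for every such $\Psi$, which means that $\xi-\nabla_{x}\genfun$ is an $L^{2}$--field on $\cell$ orthogonal to all divergence--free $\cell$--periodic vector fields, hence a periodic $y$--gradient. This characterization identifies $\nabla_{y}\tildefun$ with the gradient of a function in $L^{2}(\Om_{T};H^{1}_{\#}(\cell))$ with zero $\cell$--average, finishing the proof.
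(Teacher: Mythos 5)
Your proposal is correct in its architecture, but it takes a genuinely different and more self-contained route than the paper. The paper proves only \eqref{eq:a17} and dismisses \eqref{eq:smalleps_grad_weak_conv_i} as well known, citing the literature; its proof of \eqref{eq:a17} runs through a time-averaging device: for $\theta\in L^2(0,T)$ the functions $\int_0^T\genfune\theta\di t$ converge strongly in $L^2(\Om)$ by compactness of $H^1_0(\Om)\hookrightarrow L^2(\Om)$, hence their unfoldings converge strongly, and the weak limit $\hatfun$ of $\unfop(\genfune)$ is identified with $\genfun$ by testing against separated functions $\phi(x)\psi(y)\theta(t)$. You instead identify the limit via $\nabla_y\unfop(\genfune)=\eps\,\unfop(\nabla\genfune)\to 0$, which forces $y$-independence of $\hatfun$, combined with \eqref{eq:weak_conv_ii}; this is equally valid, arguably cleaner, and avoids Rellich compactness. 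In addition, you actually reconstruct the Cioranescu--Damlamian--Griso proof of \eqref{eq:smalleps_grad_weak_conv_i} (Poincar\'e--Wirtinger bound on $v_\eps$, subtraction of $y_M\cdot\nabla_x\genfun$, periodicity via divergence-free test fields), which the paper does not reproduce at all. What the paper's route buys is brevity; what yours buys is a complete, reference-free argument in which the time variable is visibly just a parameter.

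One step of your periodicity argument is stated imprecisely and should be fixed. The claimed limit identity $\int\xi\cdot\Psi\,\phi=0$ cannot hold for \emph{all} divergence-free $\cell$-periodic $\Psi$: taking $\Psi$ equal to a constant vector $c$ gives, in the limit, $\int_{\Om_T}\nabla_x\genfun\cdot c\,\phi\di x\di t$, which is nonzero in general (consistently with $\saverage(\xi)=\nabla_x\genfun$). You must restrict to $\Psi$ with $\saverage(\Psi)=0$, in which case the identity is equivalent to the orthogonality of $\xi-\nabla_x\genfun$ to such fields, which is exactly what you then invoke. Relatedly, if you integrate by parts in $y$ on the unfolded cell, the boundary terms on $\partial\cell$ do \emph{not} cancel by periodicity of $\Psi$ alone, because $\unfop(\genfune)$ is not $y$-periodic; those terms are of order one and are precisely what generates the $\nabla_x\genfun$ contribution. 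The clean way to run your test-field argument is to integrate by parts in $x$ \emph{before} unfolding, using $\Div_x\big(\Psi(x/\eps)\big)=\eps^{-1}(\Div_y\Psi)(x/\eps)=0$ and the compact support of $\phi$, so that no boundary terms appear and the only remainder is the one over $\Om_T\setminus\intspacetime$, which you correctly control via \eqref{eq:norm_bound} and \eqref{eq:norms_n}. With these adjustments the argument is complete.
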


\begin{proof}
Whereas the convergence in \eqref{eq:smalleps_grad_weak_conv_i} 
is a well-known property (see for instance \cite{Amar:Gianni:2016A,Donato:Yang:2012}),
in order to prove \eqref{eq:a17}, we proceed as follows. Since $\genfune\wto\genfun$ weakly
in $L^2\big(0,T;H^1_0(\Om)\big)$, we have
that, for every test function $\theta\in L^2(0,T)$,
\begin{equation}\label{eq:a18}
\int_0^T\genfune\theta(t)\di t\to \int_0^T\genfun\theta(t)\di t\,,
\qquad \text{strongly in $L^2(\Om)$.}
\end{equation}
Therefore, by \cite[Theorem 3.5]{Cioranescu:Damlamian:Griso:2008},
\begin{equation}\label{eq:a19}
\unfop\left(\int_0^T \genfune\theta(t)\di t\right)\to \int_0^T\genfun\theta(t) \di t\,,
\qquad \text{strongly in $L^2(\Om\times\cell)$.}
\end{equation}
Moreover, by \eqref{eq:weak_conv}, there exists $\hatfun\in L^2(\Om_T\times Y)$, such that
$\unfop(\genfun)\wto\hatfun$ weakly in $L^2(\Om_T\times Y)$ and $\saverage(\hatfun)=\genfun$.
Hence, for every $\phi\in L^2(\Om)$ and every $\psi\in L^2_{\#}(\cell)$,
\begin{multline*}
\int_{\Om_T\times \cell}\hatfun\phi(x)\psi(y)\theta(t)\di x\di y\di t\leftarrow
\int_{\Om_T\times \cell}\unfop(\genfune)\phi(x)\psi(y)\theta(t)\di x\di y\di t
\\
=\int_{\Om\times \cell}\unfop\left(\int_0^T\genfune\theta(t)\di t\right)\phi(x)\psi(y)\di x\di y
\to \int_{\Om\times \cell}\left(\int_0^T\genfun\theta(t)\di t\right)\phi(x)\psi(y)\di x\di y\,,
\end{multline*}
which implies that $\hatfun=\genfun$ a.e. in $\Om_T \times \cell$. Therefore, $\unfop(\genfune)\wto \genfun$
weakly in $L^2(\Om_T\times\cell)$ and \eqref{eq:a17} is proven.
\end{proof}

\begin{prop}\label{p:p1}\cite[Proposition 8]{Amar:Gianni:2016A}
Let $\{\genfune\}$ be a sequence in $L^2\big(0,T;\XX_0(\Om)\big)$ (see \eqref{eq:space2} below, for the definition of
the space $\XX_0(\Om)$)
converging weakly to $\genfun$
  in $L^2\big(0,T;H^1_0(\Om)\big)$, as $\eps\to 0$, and such that
 \begin{equation}\label{eq:a74}
\eps \int_0^T\!\!\int_{\Memb} |\beltramigrad\genfune|^2\di\sigma\di t\leq \const\,,
\end{equation}
where $\const>0$ is a constant independent of $\eps$.
  Then, for the same function
  $\tildefun$  as in \eqref{eq:smalleps_grad_weak_conv_i},
  we have that $\beltramigrad_y\tildefun\in L^2(\Om_T\times\Permemb)$ does exist and
    \begin{equation}\label{eq:a76}
\btsunfop(\beltramigrad\genfune)\wto    \beltramigrad_x\genfun
    +\beltramigrad_y\tildefun
    \,,
    \quad
    \text{weakly in}
    \,
    L^2(\Om_T\times\Permemb)\,.
   \end{equation}
\end{prop}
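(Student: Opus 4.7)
\textbf{Proof plan for Proposition \ref{p:p1}.}
First I would extract a weak limit. By hypothesis \eqref{eq:a74} and the norm estimate \eqref{eq:ts_norm_bound} of Proposition \ref{p:ts_norms},
\[
\|\btsunfop(\beltramigrad\genfune)\|^2_{L^2(\Om_T\times\Permemb)} \le \eps\|\beltramigrad\genfune\|^2_{L^2(\Memb_T)}\le\const,
\]
so, up to a subsequence, $\btsunfop(\beltramigrad\genfune)\wto G$ weakly in $L^2(\Om_T\times\Permemb)$ for some $G$. The task is then to identify $G = \beltramigrad_x\genfun + \beltramigrad_y\tildefun$, with $\tildefun$ the same corrector furnished by Theorem \ref{t:smalleps_grad_weak_conv}. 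Once the identification is done, the existence of $\beltramigrad_y\tildefun\in L^2(\Om_T\times\Permemb)$ follows automatically, since $G$ is in that space and $\beltramigrad_x\genfun$ is too.

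Next I would exploit the tangential decomposition. Since $\nu_\eps(x)=\nu(x/\eps)$ and $\btsunfop(\nu_\eps)(x,t,y)=\nu(y)$ on $\intspacetime\times\Permemb$ by Proposition \ref{p:per_odc_fun}, the relation $\beltramigrad\genfune=\nabla\genfune-(\nu_\eps\cdot\nabla\genfune)\nu_\eps$ on $\Memb$ together with \eqref{eq:unfop_product} yields
\[
\btsunfop(\beltramigrad\genfune)=\bigl(I-\nu(y)\otimes\nu(y)\bigr)\btsunfop(\nabla\genfune).
\]
In particular, testing against smooth functions $\Psi(x,t,y)$ that are tangent to $\Permemb$ in the $y$ variable reduces matters to computing the limit of $\int_{\Om_T\times\Permemb}\btsunfop(\nabla\genfune)\cdot\Psi$.

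The core of the argument is the identification of this limit via duality and integration by parts on $\Permemb$. Take $\Psi(x,t,y)=\phi(x,t)\tau(y)$ with $\phi\in\CC^\infty_c(\Om_T)$ and $\tau\in\CC^\infty_\#(\cell;\R^N)$ tangent to $\Permemb$. Using \eqref{eq:ts_norms_n} and the strong convergence $\btsunfop(\phi)\to\phi$ of Proposition \ref{c:c1},
\[
\int_{\Om_T\times\Permemb}\btsunfop(\nabla\genfune)\cdot\Psi\di\sigma\di x\di t=\eps\!\int_{\Memb_T}\!\!\nabla\genfune\cdot\tau^\eps\,\phi\di\sigma\di t+o(1)=\eps\!\int_{\Memb_T}\!\!\beltramigrad\genfune\cdot\tau^\eps\,\phi\di\sigma\di t+o(1),
\]
where the last equality uses tangentiality of $\tau$. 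Integrating by parts on the closed surface $\Memb$ via \eqref{eq:a66} transfers the tangential derivative from $\genfune$ onto $\phi$ and $\tau^\eps$, producing a leading $O(1)$ term
\[
-\int_{\Om_T\times\Permemb}\btsunfop(\genfune)\,\phi(x,t)\,\beltramidiv_y\tau(y)\di\sigma\di y\,\di x\di t,
\]
up to a remainder of order $\eps$. Here I would bring in the convergence $\unfop(\genfune)\wto\genfun+\tildefun$ (more precisely, its refinement $\tildefun$ plays the role of the ``cell oscillation'' of $\genfune$, inherited from Theorem \ref{t:smalleps_grad_weak_conv}), and in this way rewrite the limit as
\[
-\int_{\Om_T\times\Permemb}\bigl(\genfun+\tildefun\bigr)\,\phi\,\beltramidiv_y\tau\di\sigma\di y\,\di x\di t,
\]
from which another integration by parts in $y$ identifies the limit precisely as $\int_{\Om_T\times\Permemb}(\beltramigrad_x\genfun+\beltramigrad_y\tildefun)\cdot\Psi$. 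Since tangential test functions $\Psi$ are dense among admissible test fields and $G$ is itself tangential, this forces $G=\beltramigrad_x\genfun+\beltramigrad_y\tildefun$, as required.

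The main obstacle is precisely the integration-by-parts step on $\Memb$: the test field $\tau^\eps$ oscillates at scale $\eps$ and the factor $\eps$ in front is the exact magnification required to extract a finite limit through $\beltramidiv_y\tau$, while the correction term involving $\beltramigrad\phi\cdot\tau^\eps$ produces an $O(\eps)$ error and the $\tildefun$-contribution must be captured consistently with the interior unfolding limit of Theorem \ref{t:smalleps_grad_weak_conv}. Handling these two coupled scales carefully (in particular using that $\unfop(\genfune)$ has $H^1$-regularity in $y$ with $\nabla_y\unfop(\genfune)=\eps\unfop(\nabla\genfune)$ bounded, so that its trace on $\Permemb$ is well defined and converges) is the genuinely technical point of the proof.
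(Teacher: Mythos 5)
First, a caveat: the paper does not prove Proposition \ref{p:p1} at all --- it is quoted verbatim from \cite[Proposition 8]{Amar:Gianni:2016A} --- so there is no in-paper proof to compare against. Your overall strategy (extract a weak limit $G$ from the uniform bound \eqref{eq:ts_norm_bound}+\eqref{eq:a74}, observe that $G$ is tangential, and identify it by duality against fields $\phi(x,t)\tau(y)$ with $\tau$ tangent to $\Permemb$, integrating by parts on the closed surface $\Memb$ via \eqref{eq:a66}) is the right one and is essentially how such results are proved. However, the central computation as you have written it contains concrete errors that would not produce the stated limit.

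The main problem is the bookkeeping of the factor $1/\eps$ carried by the unfolding of surface integrals, see \eqref{eq:ts_norms_n}: $\int_{\Memb_T}h\,\di\sigma\di t=\frac1\eps\int_{\Om_T\times\Permemb}\btsunfop(h)\,\di\sigma\di x\di t$. After integrating by parts and using $\beltramidiv(\tau^\eps)=\eps^{-1}(\beltramidiv_y\tau)^\eps$, the leading term is
\begin{equation*}
-\int_{\Memb_T}\genfune\,\phi\,(\beltramidiv_y\tau)^\eps\di\sigma\di t
=-\frac1\eps\int_{\Om_T\times\Permemb}\btsunfop(\genfune)\,\btsunfop(\phi)\,\beltramidiv_y\tau\,\di\sigma\di x\di t\,,
\end{equation*}
i.e.\ it carries a prefactor $1/\eps$ that your displayed expression has lost. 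This is not cosmetic: the naive limit you write, $-\int\btsunfop(\genfune)\phi\,\beltramidiv_y\tau\to-\int\genfun\,\phi\,\beltramidiv_y\tau$, is identically zero because $\genfun$ is independent of $y$ and $\int_{\Permemb}\beltramidiv_y\tau\,\di\sigma=0$ by \eqref{eq:a66}. The limit must instead be extracted from the order-$\eps$ oscillation of $\btsunfop(\genfune)$ around its cell average, namely from $\eps^{-1}\bigl(\btsunfop(\genfune)-\average(\genfune)\bigr)\wto y_M\cdot\nabla\genfun+\tildefun$ (the trace version of the classical unfolding expansion). Relatedly, your statement ``$\unfop(\genfune)\wto\genfun+\tildefun$'' is false --- the weak limit of $\unfop(\genfune)$ (and of $\btsunfop(\genfune)$, by Proposition \ref{p:p2}) is $\genfun$, and $\tildefun$ appears only at order $\eps$ --- so the mechanism by which $\tildefun$ enters your limit is misidentified. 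Finally, the term $-\eps\int_{\Memb_T}\genfune\,\tau^\eps\cdot\beltramigrad\phi\,\di\sigma\di t$ is not an $O(\eps)$ remainder: since $|\Memb|=O(\eps^{-1})$ and $\|\genfune\|^2_{L^2(\Memb)}\le\const\,\eps^{-1}$, it is $O(1)$ and converges to $-\int_{\Om_T\times\Permemb}\genfun\,\tau\cdot\nabla_x\phi$; it is precisely what cancels the spurious contribution $\int\average(\genfune)\,\btsunfop(\beltramigrad\phi)\cdot\tau$ produced by the cell-average part of the $1/\eps$ term, leaving exactly $\int(\beltramigrad_x\genfun+\beltramigrad_y\tildefun)\cdot\Psi$. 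With these three corrections the argument closes, and your final duality identity then does legitimately yield both the existence of $\beltramigrad_y\tildefun\in L^2(\Om_T\times\Permemb)$ and \eqref{eq:a76}; but as written the key steps would give the wrong (indeed vanishing) limit.
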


\bigskip

\subsection{Setting of the problem}\label{ss:position}
In this subsection, we will present both the physical problem involving thick membranes and
the concentrated version involving only $(N-1)$-dimensional interfaces. It will be
the purpose of the next section to show that the concentration limit ($\eta\to 0$) of the physical
model actually gives rise to the mathematical microscopic scheme.
\medskip

Let $\lfint,\lfout, \alpha$ be strictly positive constants.
We give here a complete formulation of the problems stated in the Introduction
(the operators $\Div$ and $\nabla$, as well as $\Div^B$ and $\beltramigrad$, act
only with respect to the space variable $x$).
\medskip

We first state the physical problem with thick membranes. To this purpose,
we set $\ndfbothe (x) = \lfint$ in $\nOmoutuno$, $\ndfbothe (x) = \lfout$ in
$\nOmoutdue$,  $\ndfbothe (x) = 0$ in $\nOmint$,
$\nlfbothe (x) = 0$ in $\nOmoutuno\cup\nOmoutdue$,
$\nlfbothe (x) = {\alpha}/{\eta} $ in $\nOmint$.
Moreover, let $\overline u_0\in H^{1}_0(\Om)$ be a given function.
For every $\eps,\eta>0$, we consider the problem for
$\soluz\in L^2\big(0,T;H^1_0(\Om)\big)$ given by
\begin{alignat}2
  \label{eq1}
  -\Div(\ndfbothe \nabla\soluz+\nlfbothe \nabla \soluzt)&=0\,,&\qquad &\text{in $\Om_T$;}
  \\
  \label{eq5}
 \nabla\soluz(x,0)&=\nabla\overline u_0(x)\,,&\qquad&\text{in $\nOmint$,}
\end{alignat}
which has the following weak formulation
\begin{equation}\label{eq:a14}
\int_{0}^{T}\!\!\int_{\Om} \ndfbothe \nabla\soluz\cdot\nabla{\Phi}\di x\di t-
  \int_{0}^{T}\!\!\int_{\Om} \nlfbothe \nabla \soluz\cdot\nabla\Phi_t \di x\di t=
  \int_{\Om} \nlfbothe \nabla\overline u_0\cdot\nabla{\Phi}(0)\di x\,,
\end{equation}
for every test function $\Phi\in \CC^\infty(\overline\Om_T)$ such that $\Phi$ has compact support in $\Om$ for every
$t\in[0,T)$ and $\Phi(\cdot,T)=0$ in $\Om$.
By \cite[Theorem 2.3]{Amar:Andreucci:Gianni:Timofte:2017B}, for any given $\eta>0$,
problem \eqref{eq1}--\eqref{eq5} (or \eqref{eq:a14}) has a unique solution
$\soluz\in L^2\big(0,T;H^1_0(\Om)\big)$.
\medskip

Now, let us state the concentrated problem. To this purpose,
we define
$\lfbothe:\Om\to\R$ as
\begin{equation*}
  \lfbothe=\lfint\,\quad \text{in $\Omint$,}\qquad \lfbothe=\lfout\,\quad
  \text{in $\Omout$.}
\end{equation*}
For every $\eps>0$, we consider the problem
for $u_{\eps}(x,t)$ given by
\begin{alignat}2
  \label{eq:PDEin}
  -\Div(\lfbothe \nabla u_{\eps})&=0\,,&\qquad &\text{in $(\Omint\cup\Omout)\times(0,T)$;}
  \\
  \label{eq:FluxCont}
  [u_{\eps}] &=0 \,,&\qquad &\text{on
  $\Memb_T$;}
  \\
  \label{eq:Circuit}
  -\eps\alpha\beltrami u_{\eps t}
  &=[\lfbothe \nabla u_\eps  \cdot \nu_\eps]\,,&\qquad
  &\text{on $\Memb_T$;}
  \\
  \label{eq:BoundData}
  u_{\eps}(x,t)&=0\,,&\qquad&\text{on $\partial\Om\times(0,T)$;}
  \\
  \label{eq:InitData}
 \beltramigrad u_{\eps}(x,0)&=\beltramigrad\overline u_0(x)\,,&\qquad&\text{on $\Memb$,}
\end{alignat}
where we denote
\begin{equation}
  \label{eq:jump}
  [u_{\eps}] = u_{\eps}^{\text{out}}  -  u_{\eps}^{\text{int}} \,,
\end{equation}
and the same notation is employed also for other quantities.

Since problem \eqref{eq:PDEin}--\eqref{eq:InitData} is not standard, in order to define a proper notion of weak
solution, we need to introduce some suitable function spaces.
To this purpose and
for later use, we denote by $H^1(\Memb)$ the space of Lebesgue measurable functions
$u:\Memb\to\R$ such that $u\in L^2(\Memb)$, $\beltramigrad u \in L^2(\Memb)$.
Let us also set
\begin{equation}
 \label{eq:space2}
 \XX_0(\Om) :=\{u\in H^1_0(\Om)\ :\ u_{\mid\Memb}\in H^1(\Memb)\}\,.
\end{equation}

\begin{defin}\label{d:weak_sol}
Assume that $\overline u_0\in \XX_0(\Om)$.
We say that $u_\eps\in \spaziosoleps$ is a weak solution of problem \eqref{eq:PDEin}--\eqref{eq:InitData} if
\begin{multline}\label{eq:weak_sol}
  \int_{0}^{T}\!\!\int_{\Om} \lfbothe \nabla u_{\eps}\cdot\nabla\Phi \di x\di t
  -{\eps\alpha}\int_0^T\!\!\int_{\Memb} \beltramigrad u_{\eps}\cdot \beltramigrad \Phi_t \di\sigma\di t
  ={\eps\alpha}\int_{\Memb} \beltramigrad\overline u_0\cdot\beltramigrad\Phi(x,0)\di\sigma \,,
\end{multline}
for every test function $\Phi\in \CC^\infty(\overline\Om_T)$ such that $\Phi$ has compact support in $\Om$ for every
$t\in[0,T)$ and $\Phi(\cdot,T)=0$ in $\Om$.
\end{defin}

For every $\eps>0$, by \cite[Theorem 2.8]{Amar:Andreucci:Gianni:Timofte:2017B} the
problem \eqref{eq:PDEin}--\eqref{eq:InitData} admits a unique solution $u_\eps\in L^2\big(0,T;\XX_0(\Om)\big)$.
\medskip

If $u_\eps$ is smooth, by \eqref{eq:beltrami_bis} it follows that equation \eqref{eq:Circuit} can be written in the form
\begin{equation}
\label{eq:Circuitnew}
-\eps\alpha\left( \Delta u_{\eps t}- \nu_\eps^t\nabla^2 u_{\eps t}\nu_\eps -
(\nu_\eps\cdot\nabla u_{\eps t})\Div \nu_\eps\right)
 =[\lfboth \nabla u_\eps  \cdot \nu_\eps]\,,\qquad  \text{on $\Memb$,}
\end{equation}
where $\nabla^2u_{\eps t}$ stands for the Hessian matrix of $u_{\eps t}$.
\medskip

Finally, it will be useful in the sequel to define also
$\lfboth:Y\to\R$ as
\begin{equation*}
  \lfboth=\lfint\,\quad \text{in $\Perint$,}\qquad \lfboth=\lfout\,\quad
  \text{in $\Perout$.}
\end{equation*}

\begin{remark}\label{r:r5}
We have assumed, for the sake of simplicity,
that equations \eqref{eq1} and \eqref{eq:PDEin} are homogeneous,
but essentially in the same way we can treat also the case where a source
$f\in L^2(\Om_T)$ appears in \eqref{eq1}, so that it occurs,
after the concentration, also in \eqref{eq:PDEin}. However, in this case, if $f$
does not satisfy some stronger regularity condition, we cannot
expect that the solutions $u^\eta_\eps$ and $u_\eps$ admit a time-derivative
belonging to $L^2$, so that the second term in \eqref{eq1}, as well as the left-hand side
in \eqref{eq:Circuit}, should be considered in a weak sense as above (see
\cite[Remark 2.9]{Amar:Andreucci:Gianni:Timofte:2017B}). In the homogeneous case, actually,
one could show that a stronger formulation is possible.
\end{remark}

The main result of the paper is the following.

\begin{thm}\label{strong-form}
Assume that $\overline u_0\in H^1_0(\Om)\cap H^2(\Om)$.
The unique solution   $u_\eps$ of the variational problem \eqref{eq:weak_sol} converges,
in the sense of Lemma \ref{conv}, to $(u,w)\in {\mathcal{V}}$ (${\mathcal{V}}$
is the function space defined in Theorem \ref{th-conv-unfold}), where
$u$ is the unique solution of the  homogenized problem
\begin{equation} \label{homog-pb}
 \begin{aligned}
-&\Div \Big(C^0 \nabla u_t +(\lfav I+ A^0) \nabla u
\!+\!\!\! \int_0^t \!\!\!B^0 (t-\tau) \nabla u(x, \tau) \di \tau\Big)
 = F, &\!\!\!\!\!\!\!\!\!\!\!\!\!\!\!\text{in $\Om_T$;}
 \\
& u =0\,, &\!\!\!\!\!\!\!\!\!\!\!\!\!\!\!\!\!\!\!\!\!\!\!\!\!\text{on $\partial \Om \times (0,T)$;}
\\
 -&\Div\left(C^0\nabla u(x,0)\right) =-\Div\left(C^0\nabla\overline{u}_0(x)\right)
\,, &\!\!\!\!\!\!\!\!\!\!\!\!\!\!\!\!\!\!\!\!\!\!\!\!\!\text{in $\Om$.}
 \end{aligned}
\end{equation}
The corrector $w$ in \eqref{unfold} can be factorized as
\begin{equation} \label{corrector}
w(x,y,t)=\chi_0^i (y) \, \partial_i u(x,t)+\displaystyle \int_0^t \chi_1^i (y, t-\tau) \, \partial_i u(x, \tau) \di \tau +W(x,y,t).
\end{equation}
The matrices $A^0$, $B^0$ and $C^0$ are given by \eqref{matrix-A}- \eqref{matrix-C}, the functions $\chi_0^i$  and $\chi_1^i$  are defined in \eqref{cell-chi0_1}--\eqref{cell-chi0_3} and, respectively, in \eqref{cell-chi1_1}--\eqref{cell-chi1_init},
while the function $W$ is given by \eqref{cell-W_1}--\eqref{cell-W_4}. The source term $F$ is defined by \eqref{F} and
$$
\lambda_0= \lfint \vert \Perint \vert +\lfout \vert \Perout \vert.
$$
\end{thm}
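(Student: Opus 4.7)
The plan is to combine a priori estimates with the two-scale unfolding machinery developed in Subsection \ref{ss:unfold}, and then identify the limit by a careful choice of oscillating test functions and by factorizing the corrector along the lines suggested by \eqref{corrector}.

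First I would derive energy estimates uniform in $\eps$ by testing \eqref{eq:weak_sol} with (a mollification of) $\Phi(x,t)=\int_{t}^{T}u_\eps(x,s)\di s$; after integrating by parts in time, using \eqref{eq:InitData} and the ellipticity of $\lfbothe$, this produces a bound of the form
\begin{equation*}
  \int_{0}^{T}\!\!\int_{\Om}\lfbothe|\nabla u_\eps|^{2}\di x\di t
  +\eps\alpha\!\int_{\Memb}|\beltramigrad u_\eps(\cdot,T)|^{2}\di\sigma\le \const,
\end{equation*}
so that, in particular, \eqref{eq:a74} holds. By Theorem \ref{t:smalleps_grad_weak_conv} and Proposition \ref{p:p1}, up to a subsequence $u_\eps\wto u$ in $L^{2}(0,T;H^{1}_{0}(\Om))$, $\unfop(\nabla u_\eps)\wto \nabla_x u+\nabla_y\tildefun$ in $L^{2}(\Om_T\times\cell)$, and $\btsunfop(\beltramigrad u_\eps)\wto \beltramigrad_x u+\beltramigrad_y\tildefun$ in $L^{2}(\Om_T\times\Permemb)$, for some $\tildefun\in L^{2}(\Om_T;H^{1}_{\#}(\cell))$ with $\saverage(\tildefun)=0$.

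Next I would pass to the two-scale limit in \eqref{eq:weak_sol} using oscillating test functions of the form $\Phi^\eps(x,t)=\Phi(x,t)+\eps\psi(x,x/\eps,t)$, with $\Phi\in \CC^\infty(\overline\Om_T)$ (compactly supported in $\Om$ for every $t$ and vanishing at $t=T$) and $\psi$ smooth and $\cell$-periodic in $y$. Applying $\unfop$ to the bulk integral, and $\btsunfop$ combined with \eqref{eq:ts_norms_n} to the surface integral (where the prefactor $\eps$ exactly compensates the $1/\eps$ produced by $\btsunfop$), and exploiting Proposition \ref{c:c1} and \eqref{eq:unf_y_grad_per} to pass to the limit on the test side, one obtains the two-scale identity
\begin{multline*}
  \int_{\Om_T\times\cell}\lfboth(y)\bigl(\nabla_x u+\nabla_y\tildefun\bigr)\cdot\bigl(\nabla_x\Phi+\nabla_y\psi\bigr)\di y\di x\di t\\
  -\alpha\!\!\int_{\Om_T\times\Permemb}\bigl(\beltramigrad_x u+\beltramigrad_y\tildefun\bigr)\cdot\beltramigrad_y\psi_t\di\sigma\di x\di t
  =\alpha\!\!\int_{\Om\times\Permemb}\!\bigl(\beltramigrad\overline u_0+\beltramigrad_y\tildefun(\cdot,0)\bigr)\cdot\beltramigrad_y\psi(\cdot,0)\di\sigma\di x.
\end{multline*}
Taking $\Phi\equiv 0$ gives the micro (cell) problem for $\tildefun$, which inherits the pseudo-parabolic character of \eqref{eq:PDEin}--\eqref{eq:InitData} because of the time derivative on $\Permemb$; taking $\psi\equiv 0$ gives the macro equation driving $u$, written as a divergence of a flux of the form $\saverage\bigl(\lfboth(\nabla_x u+\nabla_y\tildefun)\bigr)$.

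Now I would exploit linearity to implement the factorization \eqref{corrector}: $\chi_0^i$ encodes the instantaneous response to $\partial_i u(x,t)$, $\chi_1^i(y,t-\tau)$ encodes the memory response to past values of $\partial_i u(x,\tau)$ produced by the pseudo-parabolic cell dynamics, and $W$ encodes the response to the initial datum $\overline u_0$. This yields the decoupled cell problems \eqref{cell-chi0_1}--\eqref{cell-W_4}, whose well-posedness follows from the same theory used for \eqref{eq:PDEin}--\eqref{eq:InitData} (cf.\ \cite[Theorem 2.8]{Amar:Andreucci:Gianni:Timofte:2017B}); in the connected/connected case, \eqref{eq:a66} forces compatibility conditions on the right-hand sides, because integrating the Laplace-Beltrami term on the closed surface $\Permemb\subset Y$ gives zero. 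Substituting \eqref{corrector} into the macro equation, $y$-averaging over $\cell$ and identifying the coefficients yields precisely the matrices $A^0$, $B^0$, $C^0$ given in \eqref{matrix-A}--\eqref{matrix-C} and the source $F$ in \eqref{F}, hence \eqref{homog-pb}. The nonstandard elliptic initial condition comes from setting $t=0$ in the homogenized equation, where the convolution term vanishes while the $C^0\nabla u_t$ coefficient survives. The main obstacle is the careful two-scale limit of the surface integral $\eps\int_0^T\!\!\int_\Memb \beltramigrad u_\eps\cdot\beltramigrad\Phi^\eps_t$ together with the correct handling of the time derivative at $t=0$; once this is done, uniqueness for \eqref{homog-pb} (which must be established separately for the pseudo-parabolic case) promotes subsequential convergence to convergence of the whole family.
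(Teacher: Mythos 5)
Your overall strategy is the same as the paper's (a priori bounds, unfolding compactness, two-scale limit identity with oscillating test functions, factorization of the corrector into $\chi_0$, $\chi_1$, $W$, identification of $A^0$, $B^0$, $C^0$, $F$), but the two-scale identity you display is wrong in a way that would derail the rest of the argument. In the surface integral you test only against $\beltramigrad_y\psi_t$, and on the right-hand side only against $\beltramigrad_y\psi(\cdot,0)$: the macroscopic tangential test terms $\beltramigrad\Phi_t$ and $\beltramigrad\Phi(\cdot,0)$ are missing. These terms do survive the limit (by Proposition \ref{c:c1}, $\btsunfop(\beltramigrad\Phi_t)\to\beltramigrad\varphi_t+\beltramigrad_y\Psi_t$, not just $\beltramigrad_y\Psi_t$), and they are precisely what produces the macroscopic surface flux $-\Div\big(\alpha\int_\Permemb(\beltramigrad u_t+\beltramigrad_y w_t)\di\sigma\big)$ in \eqref{homog-1_1} and the initial condition \eqref{homog-1_6}. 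With your identity, taking $\psi\equiv0$ yields only $-\Div\big(\saverage(\lfboth(\nabla u+\nabla_y w))\big)=0$, i.e.\ a standard elliptic equation with no $C^0\nabla u_t$ term, no surface contribution to the memory kernel, and no initial condition --- the opposite of what the theorem asserts. In addition, the term $\beltramigrad_y\tildefun(\cdot,0)$ you place on the right-hand side is spurious: the right-hand side of \eqref{eq:weak_sol} involves only the fixed datum $\overline u_0$, whose boundary unfolding converges strongly to $\beltramigrad\overline u_0$, so no corrector appears there (compare \eqref{unfold}).

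A second, related error is your explanation of the elliptic initial condition: it does not come from ``setting $t=0$ in the homogenized equation''. It comes from the $t=0$ terms of the two-scale weak formulation (the test functions do not vanish at $t=0$), which give the pair \eqref{homog-1_6}--\eqref{homog-1_5}; one then solves \eqref{homog-1_5} explicitly as $w(x,y,0)=-\chi_0(y)\cdot[\nabla\overline u_0(x)-\nabla u(x,0)]$ and substitutes into \eqref{homog-1_6} to obtain $-\Div(C^0\nabla u(x,0))=-\Div(C^0\nabla\overline u_0)$, which is nontrivial exactly when $C^0\neq0$ (connected/connected case) and degenerates to an identity when $C^0=0$. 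Finally, note that the paper obtains convergence of the whole sequence from uniqueness of $(u,w)$ for the two-scale system \eqref{unfold} (proved by an energy argument in Theorem \ref{th-conv-unfold}), rather than from well-posedness of the decoupled problem \eqref{homog-pb}, which is deferred to a separate paper; your plan to ``establish uniqueness separately for the pseudo-parabolic case'' is therefore not available within the present framework and should be replaced by the two-scale uniqueness argument. The remaining ingredients of your proposal (energy estimate, factorization, compatibility conditions via \eqref{eq:a66} and \eqref{cell-chi0_5}) are consistent with the paper.
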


Notice that the problem \eqref{homog-pb} must be intended in the following weak sense
\begin{multline}\label{eq:pb_weak}
-\int_0^T\!\!\int_\Om C^0\nabla u\cdot \nabla \Phi_t\di x\di t
+\int_0^T\!\!\int_\Om(\lfav I+A^0)\nabla u\cdot \nabla\Phi\di x\di t
\\
+\int_0^T\!\!\int_\Om\left(\int_0^t B^0(t-\tau)\nabla u(x,\tau)\di\tau\right)\cdot\nabla \Phi\di x\di t
\\
=\int_0^T\!\!\int_\Om F\Phi\di x\di t+\int_\Om C^0\nabla\overline u_0\cdot\nabla\Phi(0)\di x\,,
\end{multline}
for every test function $\Phi\in \CC^\infty(\overline\Om_T)$ such that $\Phi$ has compact support in $\Om$ for every
$t\in[0,T)$ and $\Phi(\cdot,T)=0$ in $\Om$.
In particular, the initial condition in \eqref{homog-pb} comes from the proof of the Theorem \ref{strong-form} (see \eqref{homog-1_10});
moreover, we remark that the order of derivation in \eqref{homog-pb} (as well as in \eqref{eq:Circuit} above or \eqref{eq:Circuitk} below)
is not trivial
and that $-\Div (C^0 \nabla u_t)$ should be rewritten in the form
$-\Div \left((C^0 \nabla u)_t\right)$ or $-\partial_t\left(\Div (C^0 \nabla u)\right)$
(see \cite[Remark 2.9]{Amar:Andreucci:Gianni:Timofte:2017B}). However, the weak formulation \eqref{eq:pb_weak} is rigorous.
\medskip

When different scalings with respect to $\eps$
are present, we consider, for $k\in\R$, the problem
\begin{alignat}2
  \label{eq:PDEink}
  -\Div(\lfbothe \nabla u_{\eps})&=f\,,&\qquad &\text{in $(\Omint\cup\Omout)\times(0,T)$;}
  \\
  \label{eq:FluxContk}
  [u_{\eps}] &=0 \,,&\qquad &\text{on
  $\Memb_T$;}
  \\
  \label{eq:Circuitk}
  -\eps^k\alpha\beltrami u_{\eps t}
  &=[\lfbothe \nabla u_\eps  \cdot \nu_\eps]\,,&\qquad
  &\text{on $\Memb_T$;}
  \\
  \label{eq:BoundDatak}
  u_{\eps}(x,t)&=0\,,&\qquad&\text{on $\partial\Om\times(0,T)$;}
  \\
  \label{eq:InitDatak}
 \beltramigrad u_{\eps}(x,0)&=\beltramigrad\overline u_{0\eps}(x)\,,&\qquad&\text{on $\Memb$,}
\end{alignat}
where we are interested in keeping a nonzero source $f\in L^2(\Om_T)$,
in order to show that the following results are non trivial.

\begin{thm}\label{t:casoKpiccolo}
Assume that $f\in L^2(\Om)$ and $ \overline u_{0\eps} = \eps^{(1-k)/2}\overline u_0$, with $\overline u_0\in
H^1_0(\Om)\cap H^2(\Om)$.  Then, if $k<1$ and we are in the connected/connected case, the unique solution $u_\eps$ of the problem \eqref{eq:PDEink}--\eqref{eq:InitDatak} converges to $0$, weakly in $L^2\big(0,T;\X^\eps_0(\Om)\big)$,
for $\eps\to 0$.
\end{thm}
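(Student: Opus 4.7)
The plan is to begin from the weak formulation (the natural analogue of \eqref{eq:weak_sol} with source $f$ and the factor $\eps^k$ in place of $\eps$) and test with a suitable time-cut-off approximation of $u_\eps$ itself. Using $\beltramigrad u_\eps\cdot\beltramigrad u_{\eps t}=\tfrac12\partial_t|\beltramigrad u_\eps|^2$ and the initial condition \eqref{eq:InitDatak}, this produces the identity
\begin{equation*}
\int_0^T\!\!\int_\Om \lfbothe |\nabla u_\eps|^2 \di x \di t + \frac{\eps^k\alpha}{2}\int_\Memb |\beltramigrad u_\eps(\cdot,T)|^2 \di\sigma = \frac{\eps^k\alpha}{2}\int_\Memb |\beltramigrad \overline u_{0\eps}|^2 \di\sigma + \int_0^T\!\!\int_\Om f u_\eps \di x \di t.
\end{equation*}
The scaling $\overline u_{0\eps}=\eps^{(1-k)/2}\overline u_0$ is chosen precisely so that the initial-data contribution reduces to $\tfrac{\alpha}{2}\,\eps\int_\Memb|\beltramigrad\overline u_0|^2\di\sigma$, which is $O(1)$ by \eqref{eq:ts_norms_n} applied to $\overline u_0\in H^2(\Om)$. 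Combining with Poincar\'e and Young inequalities I would then absorb the $fu_\eps$ term and obtain the uniform bound $\|u_\eps\|_{L^2(0,T;H^1_0(\Om))}+\eps^{k/2}\|\beltramigrad u_\eps\|_{L^\infty(0,T;L^2(\Memb))}\le C$.

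\textbf{Two-scale constraint forced by $k<1$.} The decisive observation is that for $k<1$ the tangential gradient on the membrane satisfies
\begin{equation*}
\|\btsunfop(\beltramigrad u_\eps)\|_{L^2(\Om_T\times\Permemb)}^2 = \eps\int_0^T\!\!\int_\Memb |\beltramigrad u_\eps|^2 \di\sigma \di t \le C\,\eps^{\,1-k} \longrightarrow 0,
\end{equation*}
so that $\btsunfop(\beltramigrad u_\eps)\to 0$ \emph{strongly} in $L^2(\Om_T\times\Permemb)$. On the other hand, extracting a subsequence with $u_\eps\wto u$ in $L^2(0,T;H^1_0(\Om))$, Proposition~\ref{p:p1} applies (its hypothesis \eqref{eq:a74} is satisfied, indeed with the left-hand side vanishing) and gives $\btsunfop(\beltramigrad u_\eps)\wto \beltramigrad_x u+\beltramigrad_y\tilde u$ in $L^2(\Om_T\times\Permemb)$ for some $\tilde u\in L^2(\Om_T;H^1_\#(Y))$ with $\saverage(\tilde u)=0$. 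Uniqueness of weak limits then yields
\begin{equation*}
\beltramigrad_x u(x,t) + \beltramigrad_y \tilde u(x,t,y) = 0 \qquad \text{a.e.\ in } \Om_T\times\Permemb.
\end{equation*}

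\textbf{Rigidity in the connected/connected case.} Define the quasi-periodic function $\phi(x,t,y):=\tilde u(x,t,y)+y\cdot\nabla_x u(x,t)$. Its tangential gradient in $y$ on $\Permemb$ is $\beltramigrad_y\tilde u+(I-\nu\otimes\nu)\nabla_x u=0$, so $\phi(x,t,\cdot)$ is locally constant on $\Permemb$. Extending $\tilde u$ by $Y$-periodicity to $\RN$, we have the crucial relation
\begin{equation*}
\phi(x,t,y+z)-\phi(x,t,y)=z\cdot\nabla_x u(x,t),\qquad z\in\ZZ^N.
\end{equation*}
In the connected/connected geometry (see Fig.~\ref{fig:3d}) the periodic extension of $\Permemb$ contains, for each coordinate direction $e_i$, a continuous curve joining some $y_0$ to $y_0+e_i$ (for example, a path winding along the $i$-th cylindrical component). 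Constancy of $\phi$ along such a curve combined with the above increment identity forces $\partial_{x_i}u\equiv 0$ for every $i$, hence $\nabla_x u\equiv 0$; together with $u\in H^1_0(\Om)$ this gives $u\equiv 0$. Since every subsequence produces the same trivial limit, the full sequence $u_\eps$ converges weakly to $0$ in $L^2(0,T;\X^\eps_0(\Om))$, as claimed. The main obstacle is precisely this last rigidity step: it is where the global topology of the connected/connected setup enters, and it is exactly the property which must fail in the connected/disconnected case (where the inclusions are isolated and no such non-contractible curves on $\Permemb$ exist).
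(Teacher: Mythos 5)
Your proof is correct, but it takes a genuinely more direct route than the paper's. The paper derives the full two-scale limit system by inserting oscillating test functions $\eps\varphi\psi(\eps^{-1}x)$ and $\eps^{2-k}\varphi\psi(\eps^{-1}x)$ into \eqref{eq:weak_solk}, factorizes the corrector as $w=\chi_0^j\partial_j u$ through the cell problem \eqref{cell-chi0_1}--\eqref{cell-chi0_3}, then tests with $\eps^{1-k}\varphi(x,t)$ to obtain the macroscopic equation $-\Div(C^0\nabla u)=0$ and concludes from the positive definiteness of $C^0$ in the connected/connected case (Lemma \ref{l:l2}). You bypass the cell-problem machinery entirely: the scaling $k<1$ forces $\btsunfop(\beltramigrad u_\eps)\to 0$ strongly in $L^2(\Om_T\times\Permemb)$, while Proposition \ref{p:p1} identifies its weak limit as $\beltramigrad u+\beltramigrad_y\tilde u$, so the pointwise constraint $\beltramigrad u+\beltramigrad_y\tilde u=0$ on $\Permemb$ holds, and connectedness of the periodized interface then kills $\nabla u$ by your increment identity $\phi(\cdot,y+z)-\phi(\cdot,y)=z\cdot\nabla u$. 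Notably, the paper records your key observation as an afterthought in Remark \ref{r:r10} (equation \eqref{eq:a68}) rather than using it as the proof, and your rigidity step is essentially the same "periodic versus linear" incompatibility that the paper deploys inside the proof of Lemma \ref{l:l2}; so your argument is a legitimate short-circuit. What the paper's longer route buys is the explicit limit structure ($C^0$, the factorization of $w$), which is reused for Theorem \ref{t:casoKpiccolo_bis} and Remark \ref{r:r11}; what yours buys is economy and a transparent explanation of why the connected/disconnected case must behave differently. Two small inaccuracies to fix: the bound $\eps\int_{\Memb}|\beltramigrad\overline u_0|^2\di\sigma\le\gamma$ follows from \eqref{eq:a3} in Remark \ref{r:r9} (not from \eqref{eq:ts_norms_n}), and the energy identity evaluated at $t=T$ should be replaced by the $\sup_t$ form \eqref{eq:energyk}, which is what the cited well-posedness result actually provides.
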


\begin{thm}\label{t:casoKpiccolo_bis}
Assume that $f\in L^2(\Om)$ and $ \overline u_{0\eps} = \eps^{(1-k)/2}\overline u_0$, with $\overline u_0\in
H^1_0(\Om)\cap H^2(\Om)$.  Then, if $k<1$ and we are in the connected/disconnected case, the unique solution $u_\eps$ of the problem \eqref{eq:PDEink}--\eqref{eq:InitDatak} weakly converges to $u\in L^2\big(0,T;H^1_0(\Om)\big)$,
for $\eps\to 0$, where $u$ is the unique solution of the problem
$$
\begin{aligned}
& -\!\Div\left(
{
\Big(\int_{\Perout}\!\!\lfout (I+\nabla_y\chi_0)\di y+\int_{\Permemb}\!\!\lfout \big((\nu+\nu\nabla_y\chi_0^{\rm{out}})\otimes y_M\big)\di\sigma
\Big)
}
\nabla u\right)\!\!=f,\ \ \text{in $\Om_T$}\,;&\\
& u=0,\qquad\qquad\qquad\qquad\qquad\qquad\qquad\qquad\qquad\qquad\qquad\qquad\qquad\ \text{on $\partial\Om\times(0,T)$},&
\end{aligned}
$$
where $\chi^j_0\in H^1_{\#}(Y)$, $j=1,\dots,N$, are $Y$-periodic functions with null mean average satisfying
\eqref{cell-chi0_1}--\eqref{cell-chi0_3} and $y_M=y-\int_Y y\di y$.
The homogenized matrix $A^{\rm{hom}}:=\int_{\Perout}\lfout (I+\nabla_y\chi_0)\di y+\int_{\Permemb}\lfout \big((\nu+\nu\nabla_y\chi_0^{\rm{out}})
\otimes y_M\big)\di\sigma$
is symmetric and positive definite.
\end{thm}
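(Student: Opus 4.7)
The plan is to apply the standard unfolding homogenization scheme (uniform a priori estimates, two-scale compactness, identification of the limit through cell problems), observing that for $k<1$ the pseudo-parabolic contribution concentrated on $\Memb$ becomes asymptotically negligible.

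First I would derive uniform bounds by testing \eqref{eq:PDEink}--\eqref{eq:Circuitk} with $u_\eps$ itself. Integrating by parts in $\Omint\cup\Omout$ and using \eqref{eq:Circuitk} to rewrite the interface jump as the time derivative of $\frac{\eps^k\alpha}{2}\|\beltramigrad u_\eps\|_{L^2(\Memb)}^2$, I obtain, after integrating in time on $(0,T)$ and inserting the scaling $\overline u_{0\eps}=\eps^{(1-k)/2}\overline u_0$,
\begin{equation*}
\int_0^T\!\!\int_\Om\lfbothe|\nabla u_\eps|^2\di x\di t+\frac{\eps^k\alpha}{2}\|\beltramigrad u_\eps(T)\|_{L^2(\Memb)}^2=\int_0^T\!\!\int_\Om f u_\eps\di x\di t+\frac{\eps\alpha}{2}\|\beltramigrad\overline u_0\|_{L^2(\Memb)}^2.
\end{equation*}
The initial contribution is uniformly bounded, since $\eps\|\nabla\overline u_0\|_{L^2(\Memb)}^2\sim\|\btsunfop(\nabla\overline u_0)\|_{L^2(\Om\times\Permemb)}^2$ and the latter is controlled by Proposition \ref{c:c1} using $\overline u_0\in H^2(\Om)$. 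By Poincaré's and Young's inequalities I then conclude that $u_\eps$ is bounded in $L^2(0,T;H^1_0(\Om))$ and that $\eps^{k/2}\beltramigrad u_\eps$ is bounded in $L^\infty(0,T;L^2(\Memb))$.

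Then, by weak compactness and Theorem \ref{t:smalleps_grad_weak_conv}, up to subsequences $u_\eps\wto u$ weakly in $L^2(0,T;H^1_0(\Om))$ and there exists $\tilde u\in L^2(\Om_T;H^1_\#(\cell)/\R)$ such that $\unfop(\nabla u_\eps)\wto\nabla_x u+\nabla_y\tilde u$ weakly in $L^2(\Om_T\times\cell)$. Writing the weak formulation associated with \eqref{eq:PDEink}--\eqref{eq:InitDatak} and letting $\eps\to 0$, the crucial point is that the pseudo-parabolic surface term disappears: by Cauchy--Schwarz, the uniform bound $\|\beltramigrad u_\eps\|_{L^\infty(0,T;L^2(\Memb))}\le C\eps^{-k/2}$ and $|\Memb|_{N-1}\sim\eps^{-1}$ yield
\begin{equation*}
\Big|\eps^k\alpha\int_0^T\!\!\int_{\Memb}\beltramigrad u_\eps\cdot\beltramigrad\Phi_t\,\di\sigma\di t\Big|\le C\eps^{k/2}\|\beltramigrad\Phi_t\|_{L^\infty(\Om_T)}|\Memb|^{1/2}=O(\eps^{(k-1)/2})\to 0,
\end{equation*}
and an identical estimate disposes of the initial surface term. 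Applying the unfolding to the volume term, the weak formulation passes to the two-scale limit as $\int_{\Om_T\times\cell}\lfboth(y)(\nabla_x u+\nabla_y\tilde u)\cdot\nabla\Phi\di y\di x\di t=\int_{\Om_T}f\Phi\di x\di t$ for every admissible macroscopic test function $\Phi$.

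Finally, inserting two-scale test functions of the form $\Phi^\eps=\eps\phi(x,t)\psi(x/\eps)$ with $\psi\in H^1_\#(\cell)$, the surface term again drops out as $O(\eps^{(k-1)/2})$ (the extra factor $\eps^{-1}$ from the microscopic tangential gradient of $\psi$ is compensated by the $\eps$ prefactor in $\Phi^\eps$), so the standard derivation yields the cell equations \eqref{cell-chi0_1}--\eqref{cell-chi0_3} and the factorization $\tilde u(x,y,t)=\chi_0^j(y)\partial_j u(x,t)$. The disconnected geometry of $\Perint$ plays here an essential role: because each isolated inclusion is decoupled from the exterior at the microscopic level, the volume contribution of $\Perint$ to the effective tensor can be rewritten, via an integration by parts using $\nabla_y y_M=I$ on each inclusion, as the surface integral on $\Permemb$ that appears in the stated expression for $A^{\rm hom}$. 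The main obstacle I foresee is precisely the careful bookkeeping of this integration by parts across the disconnected component, together with verifying that the resulting $A^{\rm hom}$ is both symmetric and positive definite; once this is settled, uniqueness of the limit problem promotes the convergence to the entire sequence and identifies the homogenized equation.
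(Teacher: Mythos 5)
Your opening steps (the energy identity, the uniform bound $\int_{\Om_T}\abs{\nabla u_\eps}^2+\eps^k\sup_t\int_{\Memb}\abs{\beltramigrad u_\eps}^2\le\gamma$, and the two-scale compactness) match the paper and are correct. The proof breaks at the passage to the limit. You claim that the interface term satisfies
\begin{equation*}
\Big|\eps^k\alpha\int_0^T\!\!\int_{\Memb}\beltramigrad u_\eps\cdot\beltramigrad\Phi_t\di\sigma\di t\Big|=O\big(\eps^{(k-1)/2}\big)\to 0\,,
\end{equation*}
but for $k<1$ the exponent $(k-1)/2$ is negative, so your own bound diverges. The surface term is not negligible in this regime; it is the \emph{dominant} one (this is exactly why, in the connected/connected case, Theorem \ref{t:casoKpiccolo} forces $u\equiv0$ despite nonzero data). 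A sharper estimate does not help: unfolding gives $\eps^{k-1}\alpha\int_{\Om_T\times\Permemb}\btsunfop(\beltramigrad u_\eps)\cdot\btsunfop(\beltramigrad\Phi_t)$, and although $\btsunfop(\beltramigrad u_\eps)\to0$ in $L^2(\Om_T\times\Permemb)$ with rate $\eps^{(1-k)/2}$, the product is still only $O(\eps^{(k-1)/2})$, an indeterminate $\infty\cdot 0$; this is precisely the content of Remark \ref{r:r11}, which says that in the connected/disconnected case the plain macroscopic test function gives no information on $u$. The same bookkeeping error affects your cell-problem step: with $\Phi^\eps=\eps\phi\psi(x/\eps)$ the surface contribution is again of order $\eps^{k-1}$ times a vanishing quantity, and if the surface term really dropped out you would land on the perfect-transmission cell problem \eqref{eq:a33}--\eqref{eq:a34} (the $k>1$ case, Theorem \ref{t:casoKgrande}), not on \eqref{cell-chi0_1}--\eqref{cell-chi0_3}, which you invoke inconsistently.

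The missing idea is the choice of test functions. The paper gets the bulk cell equation with $\Phi=\eps\varphi\psi$, $\psi$ supported away from $\Permemb$ (surface term identically zero); the interface cell equation with the scaling $\Phi=\eps^{2-k}\varphi\psi$ (surface term of order one, volume term $o(1)$); and, crucially, the macroscopic equation with the Cioranescu--Damlamian--Li test function $\Phi=\varphi\,(1-\psi(\eps^{-1}x))+\average(\varphi)\,\psi(\eps^{-1}x)$, where $\psi\in\CC^\infty_c(Y)$ equals $1$ on $\Perint\cup\Permemb$ (possible only in the connected/disconnected geometry). Since this $\Phi$ is constant on each cell in a neighborhood of $\Memb$, $\beltramigrad\Phi$ vanishes identically there and the problematic term disappears exactly rather than asymptotically; the price is the extra gradient $\tfrac1\eps(\average(\varphi)-\varphi)\nabla_y\psi\to-(y_M\cdot\nabla\varphi)\nabla_y\psi$, which, integrated by parts against the flux jump $[\lfboth(\nabla u+\nabla_y w)\cdot\nu]$ (nonzero, because \eqref{cell-chi0_1}--\eqref{cell-chi0_3} does not impose flux continuity across $\Permemb$), produces the surface integral $\int_{\Permemb}\lfout\big((\nu+(\nabla_y\chi_0)^{\rm out}\nu)\otimes y_M\big)\di\sigma$ in $A^{\rm hom}$. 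Your proposed route to that term, rewriting the $\Perint$ volume contribution by parts via $\nabla_y y_M=I$, is also not what happens: in this geometry $\nabla_y\chi_0=-I$ on $\Perint$, so the inclusion volume contribution is simply zero, and the surface integral is a genuinely additional term generated by the special test function.
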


\begin{thm}\label{t:casoKgrande}
Assume that $f\in L^2(\Om)$ and $ \overline u_{0\eps} = \eps^{(1-k)/2}\overline u_0$, with $\overline u_0\in
H^1_0(\Om)\cap H^2(\Om)$.  Then, if $k>1$, the unique solution $u_\eps$ of the problem \eqref{eq:PDEink}--\eqref{eq:InitDatak} weakly converges to $u\in L^2\big(0,T;H^1_0(\Om)\big)$,
for $\eps\to 0$, where $u$ is the unique solution of the problem
$$
\begin{aligned}
& -\Div\left(\Big(\int_{Y}\lfboth(I+\nabla_y\widetilde\chi_0)\di y\Big)\nabla u\right)=f\,,\qquad
&\text{in $\Om_T$}\,;\\
& u=0\,,\qquad &\text{in $\partial\Om\times(0,T)$}\,,
\end{aligned}
$$
where $\widetilde\chi^j_0\in H^1_{\#}(Y)$, $j=1,\dots,N$, are $Y$-periodic functions with
null mean average satisfying \eqref{eq:a33}--\eqref{eq:a34}.
The homogenized matrix $A^{\rm{hom}}:=\int_{Y}\lfboth(I+\nabla_y\widetilde\chi_0)\di y$ is symmetric and positive definite.
\end{thm}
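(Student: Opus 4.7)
\medskip

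\emph{Proof proposal.} The plan is to repeat the energy--compactness--unfolding scheme used for the critical scaling $k=1$, but to exploit the supercritical factor $\eps^k$ ($k>1$) in front of the Laplace--Beltrami term to show that the surface contribution disappears entirely in the limit, leaving a purely bulk homogenized equation.

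First, I would choose $u_\eps$ itself (formally) as a test function in the weak formulation of \eqref{eq:PDEink}--\eqref{eq:InitDatak} to derive the basic energy estimate. Because $\lfbothe$ is uniformly elliptic and the forcing $f\in L^2(\Om_T)$ does not scale with $\eps$, one obtains the bound $\|u_\eps\|_{L^2(0,T;H^1_0(\Om))}\le C$ together with $\eps^k\int_0^T\!\!\int_\Memb|\beltramigrad u_\eps|^2\di\sigma\di t\le C$, where the absorbed initial surface energy $\eps^k\|\beltramigrad\overline u_{0\eps}\|_{L^2(\Memb)}^2=\eps\|\beltramigrad\overline u_0\|_{L^2(\Memb)}^2$ is controlled by $\overline u_0\in H^2(\Om)$ via the usual $O(\eps^{-1})$ scaling of surface integrals on $\Memb$. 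Up to subsequences, by Theorem~\ref{t:smalleps_grad_weak_conv} there exist $u\in L^2(0,T;H^1_0(\Om))$ and $\widetilde u\in L^2(\Om_T;H^1_\#(Y))$ with $\saverage(\widetilde u)=0$ such that $u_\eps\wto u$ weakly in $L^2(0,T;H^1_0(\Om))$, $\unfop(u_\eps)\wto u$, and $\unfop(\nabla u_\eps)\wto \nabla_x u+\nabla_y\widetilde u$ weakly in $L^2(\Om_T\times Y)$.

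Next I would pass to the limit in the weak formulation. For bulk test functions of the form $\Phi(x,t)+\eps\,\fun(x,t)\theta(x/\eps)$ with $\fun\in \CC_c^\infty(\Om_T)$, $\theta\in \CC_\#^\infty(Y)$, unfolding yields, in the limit $\eps\to 0$, the two--scale problem
\begin{equation*}
\int_0^T\!\!\int_{\Om\times Y}\lfboth(y)\big(\nabla_x u+\nabla_y\widetilde u\big)\cdot\big(\nabla_x\Phi+\nabla_y(\fun\theta)\big)\di y\di x\di t=\int_0^T\!\!\int_\Om f\Phi\di x\di t.
\end{equation*}
The key point is that the Laplace--Beltrami contribution $\eps^k\int_0^T\!\!\int_\Memb\beltramigrad u_\eps\cdot\beltramigrad\Phi_t\di\sigma\di t$ and the initial surface term $\eps^k\int_\Memb\beltramigrad\overline u_{0\eps}\cdot\beltramigrad\Phi(0)\di\sigma$ both vanish: by Cauchy--Schwarz and the uniform bound $\int_\Memb|\beltramigrad\Phi_t|^2\di\sigma=O(\eps^{-1})$ for smooth $\Phi$, they are bounded respectively by $\const\,\eps^{(k-1)/2}$ and $\const\,\eps^{(k-1)/2}$, and $k>1$ does the job. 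Hence no memory term survives and no jump condition on $\Permemb$ is produced.

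Taking $\Phi\equiv 0$ and varying $\fun,\theta$ gives the cell equation $-\Div_y(\lfboth(y)(\nabla_x u+\nabla_y\widetilde u))=0$ in $Y$ with $Y$-periodic boundary conditions, which by linearity yields $\widetilde u(x,y,t)=\widetilde\chi_0^j(y)\,\partial_{x_j}u(x,t)$ with $\widetilde\chi_0^j$ the solutions of \eqref{eq:a33}--\eqref{eq:a34}. Taking $\fun\theta\equiv 0$ and integrating out $y$ then gives the macroscopic equation with homogenized matrix $A^{\rm hom}=\int_Y\lfboth(I+\nabla_y\widetilde\chi_0)\di y$. Symmetry and positive definiteness of $A^{\rm hom}$ follow from the standard variational characterization of the $\widetilde\chi_0^j$ together with the uniform positivity of $\lfboth$; uniqueness of $u$ solving the standard elliptic Dirichlet problem is classical, so the whole sequence converges. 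The main obstacle I anticipate is purely bookkeeping: making sure the initial surface term really is $o(1)$ under the chosen rescaling $\overline u_{0\eps}=\eps^{(1-k)/2}\overline u_0$, and verifying that the $O(\eps^{-1})$ scaling of $\int_\Memb|\beltramigrad\Phi|^2\di\sigma$ is sharp enough to make both the time--surface and initial--surface contributions vanish, as sketched above.
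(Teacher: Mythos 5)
Your proposal is correct and follows essentially the same route as the paper: energy estimate, unfolding compactness, two-scale test functions, vanishing of both surface terms at rate $O(\eps^{(k-1)/2})$, factorization via $\widetilde\chi_0^j$, and the standard symmetry/positivity argument yielding uniqueness and convergence of the whole sequence. The only cosmetic difference is that the paper packages the decay of the surface contribution through the boundedness of $\btsunfop(\beltramigrad v_\eps)$ with $v_\eps=\eps^{(k-1)/2}u_\eps$, whereas you obtain the same $\eps^{(k-1)/2}$ rate by a direct Cauchy--Schwarz estimate using the $O(\eps^{-1})$ scaling of $|\Memb|$.
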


\section{Derivation of the concentrated problem}
\label{s:concentrazione}
In this section, using a local parametrization of the regular surface $\Memb$
as in \cite[Section 3]{Amar:Gianni:2016C} (see also \cite[Section 3]{Amar:Andreucci:Bisegna:Gianni:2006a})
and following the outline of \cite[Theorem 3.1]{Amar:Gianni:2016C}, we prove the following result.

\begin{thm}\label{t:concentration}
Assume that $\overline u_0\in W^{1,\infty}_0(\Om)\cap H^2(\Om)$.
Let $u_\eps^\eta$ be the solution of \eqref{eq:a14}. Then,
\begin{equation}\label{eq:a20}
\soluz \rightharpoonup u_{\eps}\,, \quad
\nabla\soluz \rightharpoonup\nabla u_{\eps}\,,
\qquad \text{weakly in
  $L^{2}(\Om_T)$,}
\end{equation}
where $u_\eps$ is the solution of \eqref{eq:weak_sol}.
\end{thm}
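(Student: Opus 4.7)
The plan is to follow the blueprint of \cite[Theorem 3.1]{Amar:Gianni:2016C}: derive $\eta$-uniform a priori estimates for $\soluz$ from the weak formulation \eqref{eq:a14}, extract weak limits along subsequences, identify the limit with the unique solution $u_\eps$ of \eqref{eq:weak_sol} (whose uniqueness is given by \cite[Theorem 2.8]{Amar:Andreucci:Gianni:Timofte:2017B}), and conclude that the full sequence converges. Testing \eqref{eq:a14} against $\Phi(x,t)=-\int_t^T \soluz(x,s)\di s$ (mollified in time if needed to fall in the admissible test class) and using $\overline u_0\in W^{1,\infty}_0(\Om)$ together with $|\nOmint|\sim \eps\eta|\Memb|_{N-1}$ on the right-hand side, I expect to obtain the two uniform bounds
\begin{equation*}
\int_0^T\!\!\int_{\nOmoutuno\cup\nOmoutdue}|\nabla\soluz|^2\di x\di t\le C,
\qquad
\frac{\alpha}{\eta}\int_0^T\!\!\int_{\nOmint}|\nabla\soluz|^2\di x\di t\le C,
\end{equation*}
yielding $\soluz\wto u$ in $L^2(0,T;H^1_0(\Om))$ along a subsequence.

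\textbf{Passing to the limit.} The volume part $\intoTom \ndfbothe\,\nabla\soluz\cdot\nabla\Phi\di x\di t$ converges to $\intoTom \lfbothe\,\nabla u\cdot\nabla\Phi\di x\di t$ by the weak convergence of $\nabla\soluz$ and the pointwise convergence $\ndfbothe\to\lfbothe$ outside a set of measure $O(\eta)$ (the thin layer contribution being bilinear and estimated by the uniform bound). The delicate step is the concentration term
\begin{equation*}
\frac{\alpha}{\eta}\int_0^T\!\!\int_{\nOmint}\nabla\soluz\cdot\nabla\Phi_t\di x\di t.
\end{equation*}
Following \cite[Section 3]{Amar:Gianni:2016C} and \cite[Section 3]{Amar:Andreucci:Bisegna:Gianni:2006a}, I would parametrise $\nOmint$ locally by normal coordinates $x=\sigma+s\nu(\sigma)$, $\sigma\in\Memb$, $s\in(-\eps\eta/2,\eps\eta/2)$, whose Jacobian is $1+O(s)$, and split $\nabla=\nabla^\tau+\nu\partial_s$. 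The prefactor $\alpha/\eta$ then cancels the transverse thickness $\eps\eta$ in the tangential-tangential pairing, leaving $\eps\alpha$ in front; a Taylor expansion of the smooth $\Phi$ around $\Memb$ and the (weak) trace convergence of slices of $\soluz$ to $u|_\Memb$ should identify the limit as $\eps\alpha\intoTme \beltramigrad u\cdot\beltramigrad \Phi_t\di\sigma\di t$. The mixed normal-tangential and normal-normal terms must be shown to vanish: the latter reduces, after integration in $s$, to a pairing against $[\soluz]$ across the thin layer, which tends to $[u]=0$ by the continuity of the weak limit $u\in H^1_0(\Om)$ across $\Memb$; the former is handled similarly by the smoothness of $\Phi$ and the uniform control of $(\alpha/\eta)^{1/2}\nabla\soluz$. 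The initial-data term on the right-hand side of \eqref{eq:a14} is rewritten by the same device as $\eps\alpha\int_{\Memb}\beltramigrad\overline u_0\cdot\beltramigrad\Phi(0)\di\sigma$, using the $H^2$-regularity of $\overline u_0$ to pass inside the thin layer.

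\textbf{Conclusion and main obstacle.} Combining the three limits, $u$ satisfies \eqref{eq:weak_sol}; by uniqueness $u=u_\eps$, and since every subsequence has the same limit the full net $\soluz$ converges weakly to $u_\eps$ in $L^2(0,T;H^1_0(\Om))$, proving \eqref{eq:a20}. The hard part will clearly be the concentration passage for the $\nlfbothe$-term: one must orchestrate the normal parametrisation of the curved tubular layer $\nOmint$, absorb the curvature remainders in the Jacobian, and kill the normal-direction contributions so that only the tangential-tangential, Laplace-Beltrami structure survives in the limit. Everything else is a fairly standard weak-convergence bookkeeping.
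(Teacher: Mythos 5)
There is a genuine gap in the central step. You propose to pass to the limit in \eqref{eq:a14} with an \emph{arbitrary} admissible test function $\Phi$ and to show that the normal contributions of the concentration term vanish. They do not. In normal coordinates the term
$\frac{\alpha}{\eta}\int_0^T\!\!\int_{\nOmint}\nabla\soluz\cdot\nabla\Phi_t\di x\di t$
splits orthogonally into a tangential--tangential part (which indeed converges to $\eps\alpha\int_0^T\!\!\int_{\Memb}\beltramigrad u_\eps\cdot\beltramigrad\Phi_t\di\sigma\di t$) and a normal--normal part
$\frac{\alpha}{\eta}\int_0^T\!\!\int_{\nOmint}(\nu_\eps\cdot\nabla\soluz)(\nu_\eps\cdot\nabla\Phi_t)\di x\di t$.
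Integrating the latter in the transverse variable reduces it, up to $o(1)$, to $\frac{\alpha}{\eta}\int_0^T\!\!\int_{\Memb}[\soluz]_{\mathrm{layer}}\,(\nu_\eps\cdot\nabla\Phi_t)\di\sigma\di t$, and the energy estimate only yields $\|[\soluz]_{\mathrm{layer}}\|_{L^2(\Memb_T)}=O(\eta)$ (since $\int_{\nOmint}|\nabla\soluz|^2\le\gamma\eta$). The prefactor $1/\eta$ exactly cancels this decay: the term is $O(1)$, not $o(1)$, and its limit is $\eps\alpha\int_0^T\!\!\int_{\Memb}V_\nu\,(\nu_\eps\cdot\nabla\Phi_t)\di\sigma\di t$ for the (generically nonzero) weak limit $V_\nu$ of $[\soluz]_{\mathrm{layer}}/(\eps\eta)$ --- a term that has no counterpart in \eqref{eq:weak_sol}. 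The same defect appears on the right-hand side: with a generic $\Phi$ the datum term converges to $\eps\alpha\int_{\Memb}\nabla\overline u_0\cdot\nabla\Phi(0)\di\sigma$ with \emph{full} gradients, not the tangential ones. Your appeal to ``$[u]=0$ by continuity of the weak limit'' cannot repair this, because weak convergence gives no rate and the obstruction is precisely the $1/\eta$ amplification.

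The paper circumvents this by not using a generic test function at all: starting from a test function $\Phi$ of the limit problem, it builds $\testeta$ equal to $\Phi(\pi_0(y),t)$ inside $\nOmint$ --- hence \emph{constant along the normal direction} there, so that $\nu_\eps\cdot\nabla\testeta\equiv 0$ and the normal components of $\nabla\soluz$ and $\nabla\overline u_0$ are never paired with anything --- and glues $\testeta$ linearly to $\Phi$ across the slightly larger tube $\wnOmint$, where the coefficient is $O(1)$ rather than $O(1/\eta)$, so the gluing error is negligible. Only then does the tangential average $\frac{1}{\eps\eta}\int\widetilde J\nabla_\xi\soluz\di r$ appear alone, and its weak limit is identified with $\beltramigrad u_\eps$ by a duality argument against $\beltramidiv\Psi$ (your ``weak trace convergence of slices'' is the right idea here, but it must be made precise exactly this way). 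Your a priori estimates, the treatment of the bulk term, and the uniqueness/whole-sequence conclusion are all fine; the proof stands or falls on the construction of this adapted test function, which is missing from your argument.
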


\begin{proof}
We first note that, as proven in \cite[Proposition 2.2]{Amar:Andreucci:Gianni:Timofte:2017B},
one can derive from \eqref{eq:a14} the energy inequality
\begin{equation}
  \label{eq:energy_a}
\begin{aligned}
& \int_{0}^{T}\int_{\nOmoutdue\cup\nOmoutuno} \abs{\nabla \soluz}^{2}
  \di x\di t
  +  \frac{1}{\eta}\sup_{t\in(0,T)}\int_{\nOmint} \abs{\nabla \soluz}^{2}(t) \di x
 \\
\le &\frac{\alpha}{\eta \min(\lfint,\lfout,\alpha)}\int_{\nOmint} \abs{\nabla \overline u_0}^{2}\di x
\le\gamma \frac{\alpha\,\eta|\Memb|}{\eta \min(\lfint,\lfout,\alpha)}
\Vert{\nabla \overline u_0}\Vert^{2}_{L^\infty(\Om)}\le \gamma\,,
\end{aligned}
\end{equation}
where $\gamma$ depends on $\eps,\lfint,\lfout,\alpha,
\Vert \nabla \overline u_0\Vert_{L^{\infty}(\Om)}$,
but not on $\eta$.
As a consequence of \eqref{eq:energy_a}, as $\eta\to 0$, we may assume, extracting a subsequence if needed, that
\begin{equation*}
\soluz \rightharpoonup u_{\eps}\,, \quad
\nabla\soluz \rightharpoonup\nabla u_{\eps}\,,
\qquad \text{weakly in
  $L^{2}(\Om_T)$,}
\end{equation*}
where, for every $\eps>0$, $u_\eps\in L^2\big(0,T;H^1_0(\Om)\big)$.

In order to proceed with the concentration of problem \eqref{eq1}--\eqref{eq5}, we need to choose a suitable testing function
in the weak formulation \eqref{eq:a14}, before passing to the limit for $\eta\to0$.
To this purpose, we recall that there exists an $\eta_0>0$, such that
for $\eta<\eta_0$, the application
$$
\psi:\Memb\times[-\eps\eta,\eps\eta]\to \wnOmint\,,\qquad
\psi(\yg,r)=\yg+r\nu_\eps(\yg)=y\in\wnOmint
$$
is a diffeomorfism onto its image, where we denote by
$\wnOmint$ the tubular neighborhood of $\Memb$ with thickness $2\eps\eta$.
Clearly, $\wnOmint$ can be considered as the union of surfaces denoted by $\Memb_r$
parallel
to $\Memb$ and at distance $|r|$ from it, when $r$ varies in $[-\eps\eta,\eps\eta]$.
Hence, for $y\in\wnOmint$, there exists a unique $(\yg,r)\in\Memb\times[-\eps\eta,\eps\eta]$
such that $y=\yg+r\nu_\eps(\yg)$ and, then, $y\in \Memb_r$ and $\nu_\eps(\yg)$ coincides with
the normal to the surface $\Memb_r$ at $y$.
Moreover, we can locally parametrize $\Memb$ in such a way that there exist $\wMemb\subset\R^{N-1}$
and $ \yg:\wMemb \to \Memb$ such that
$\Memb\ni \yg=\yg(\xi)$, where $\xi=(\xi_1,\dots,\xi_{N-1}) \in\wMemb$ and,
if we set $\di\sigma=\sqrt{g(\xi)}\di\xi$, we may assume that $\gamma_1\leq \sqrt{g(\xi)}
\leq\gamma_2$, for every $\xi\in \wMemb$, where $\gamma_1,\gamma_2$ are suitable strictly positive
constants.
As a consequence, we have obtained a change of coordinates in $\R^N$,
whose Jacobian matrix will be denoted by $J(\xi,r)$, defined by
$$
\wnOmint\ni y =(y_1,\dots,y_N)\longleftrightarrow (\xi,r)=(\xi_1,\dots,\xi_{N-1},r)
\in \wMemb\times[-\eps\eta,\eps\eta]\,.
$$
By the assumed regularity of $\Memb$, it follows
that $J(\xi,r)=J(\xi,0)+M_\eta$, where $M_\eta$ denotes a suitable matrix such that
$|M_\eta|\leq\gamma\eta$, so that $|det J(\xi,r)| =|det J(\xi,0)|+R_\eta$, where
$|R_\eta|\leq\gamma\eta$; moreover, by the choice of the coordinates $(\xi,r)$, we have that $|det J(\xi,0)|=\sqrt{g(\xi)}$ (recall that the volume element
$\di y=|det J(\xi,r)|\di\xi\di r$, for $r=0$, i.e. on $\Memb$, becomes
$\di y=|det J(\xi,0)|\di\xi\di r=\di\sigma\di r=\sqrt{g(\xi)}\di\xi\di r$).

Finally, we define $\pi_0(y)=\yg$ as the orthogonal projection
of $y\in\wnOmint$ on $\Memb$ and
$\rho(y)=r$ as the signed distance of $y\in\wnOmint$ from $\Memb$. Note that
$|\nabla\rho(y)|$ is bounded.

In the sequel, we assume without
loss of generality that the support of our testing functions is sufficiently
small to allow for the representation introduced above.
The general case can then be recovered by means of a standard partition of unity
argument.
Moreover, for the sake of brevity, we will use the same symbol for the same function
even if written with respect to different variables.

Let now $\Phi\in \CC^{\infty}(\overline\Om_T)$
be any given testing function for the concentrated problem \eqref{eq:PDEin}--\eqref{eq:InitData}
as in Definition \ref{d:weak_sol}, with the additional assumption on its support stated above.
Starting from $\Phi$, we construct a suitable test function $\testeta$ for problem \eqref{eq:a14}
in such a way that it does not depend on the transversal coordinate
inside $\nOmint$ (thus it is equal to its value on $\Memb$) and it is linearly connected
with $\Phi$ in $\nOmoutuno$ and $\nOmoutdue$ along the $r$-direction.
It is crucial in order to develop the concentration procedure to make this
gluing
where the diffusivity in equation \eqref{eq1} is stable with respect to $\eta$, i.e.
inside the set $\wnOmint\setminus\nOmint\subset\nOmoutuno\cup\nOmoutdue$.
To this purpose, define
\begin{equation}\label{eq:a15}
\testeta(y,t) = \left\{
\begin{aligned}
& \Phi(y,t) \qquad & \text{if $(y,t)\in (\nOmoutdue\setminus\wnOmint)\times(0,T)$;}
\\
& \testeta_{\text{out}}(y,t)
\qquad & \text{if $(y,t)\in(\nOmoutdue\cap\wnOmint)\times(0,T)$;}
\\
& \Phi\big(\pi_{0}(y),t\big)
\qquad & \text{if $(y,t)\in\nOmint\times(0,T)$;}
\\
& \testeta_{\text{int}}(y,t)
\qquad & \text{if $(y,t)\in(\nOmoutuno\cap\wnOmint)\times(0,T)$;}
\\
& \Phi(y,t) \qquad & \text{if $(y,t)\in (\nOmoutuno\setminus\wnOmint)\times(0,T)$,}
\end{aligned}
\right.
\end{equation}
where
\begin{multline*}
\testeta_{\text{out}}(y,t)=\bigg[ \Phi\big(\pi_{0}(y)+\eps\eta\nu_\eps(\pi_0(y)),t\big) -
  \Phi\big( \pi_{0}(y),t\big) \bigg]
  \frac{2\rho(y)-\eps\eta}{\eps\eta}
  +\Phi\big( \pi_{0}(y),t\big)
\end{multline*}
and
\begin{multline*}
\testeta_{\text{int}}(y,t)
= \bigg[  \Phi\big( \pi_{0}(y),t\big) -
\Phi\big( \pi_{0}(y)-\eps\eta \nu_\eps(\pi_0(y)),t\big)\bigg]
  \frac{2\rho(y)+\eps\eta}{\eps\eta}
   +\Phi\big( \pi_{0}(y),t\big).
\end{multline*}
By a density argument, we can use the Lipschitz continuous function $\testeta$ as a testing function in
\eqref{eq:a14}; then, it follows that
\begin{multline}\label{eq:a16}
 \int_{0}^{T}\!\!\int_{(\nOmoutuno\cup\nOmoutdue)\setminus\wnOmint} \lfbothe \nabla \soluz\cdot\nabla\Phi \di y\di t
 -\frac{\alpha}{\eta}\int_{0}^{T}\!\!\int_{\nOmint} \nabla \soluz\cdot\nabla
 \Big(\Phi_t\big( \pi_{0}(y),t\big)\Big)\di y\di t
  \\
 +\int_{0}^{T}\!\!\int_{\nOmoutuno\cap\wnOmint} \lfbothe \nabla \soluz\cdot\nabla\testeta_{\text{int}}\di y\di t
 +\int_{0}^{T}\!\!\int_{\nOmoutdue\cap\wnOmint} \lfbothe \nabla \soluz\cdot\nabla\testeta_{\text{out}}\di y\di t
 \\
 =
\frac{\alpha}{\eta}\int_{\nOmint} \nabla\overline u_0\cdot
\nabla\Big(\Phi\big( \pi_{0}(y),0\big)\Big)\di y\,.
 \end{multline}
We take into account that
\begin{equation*}
\nabla \testeta_{\text{out}}(y,t)=\Im(\eta)
+\bigg[\Phi\big( \pi_{0}(y)+\eps\eta\nu_\eps(\pi_0(y)),t\big) -
  \Phi\big( \pi_{0}(y),t\big) \bigg]
  \frac{2\nabla\rho(y)}{\eps\eta}\,,
\end{equation*}
where with $\Im(\eta)$ we denote a bounded quantity with respect to $\eta$, and that
\begin{equation}
\Big\vert\Phi\big( \pi_{0}(y)+\eps\eta\nu_\eps(\pi_0(y)),t\big) -
  \Phi\big( \pi_{0}(y),t\big)\Big\vert\le \gamma\eps\eta\,,
  \end{equation}
with $\const$ independent of $\eta$. Clearly, similar estimates hold for $\testeta_{\text{int}}$.
Owing to \eqref{eq:energy_a}, it is easy to see that, when $\eta\to 0$,
the second line in the equality \eqref{eq:a16} tends to $0$. In addition obviously
\begin{equation*}
\int_{0}^{T}\!\!\int_{(\nOmoutuno\cup\nOmoutdue)\setminus\wnOmint} \lfbothe \nabla \soluz\cdot\nabla\Phi \di y\di t
\to  \int_{0}^{T}\!\!\int_{\Omint\cup\Omout} \lfbothe \nabla u_\eps\cdot\nabla\Phi \di y\di t\,.
\end{equation*}
Hence, the crucial limits are the second and the fifth ones in \eqref{eq:a16}. Let us deal with
the second limit; the fifth can be treated in a similar and even simpler way. In order to do this,
we pass to the new coordinates $(\xi,r)$ defined above,
recalling that $J(\xi,r)$ denotes the Jacobian matrix of such a change of coordinates.
Moreover, denoting by $\tangrad{r}$ the tangential gradient with respect to the surface $\Memb_r$ and
recalling that the normal vector at $y\in\Memb_r$ coincides with the normal at $\pi_0(y)\in\Memb$,
we have $\tangrad{r} \soluz=\nabla\soluz-(\nu_\eps(\pi_0(y))\cdot \nabla\soluz) \nu_\eps(\pi_0(y))$,
with $r=\rho(y)$.
Also, since the test function does not depend on the normal coordinate $r$ in $\nOmint$, we have
that $\nabla\big(\Phi(\pi_0(y),t)\big)=\beltramigrad\big(\Phi(\pi_0(y),t)\big)$ and hence
$\nabla\soluz\cdot\nabla\Phi=\tangrad{\rho(y)}\soluz\cdot\beltramigrad\Phi$.

Next we denote by $\widetilde J(\xi,r)$ the $N\times(N-1)$ rectangular matrix such that,
for every function $v(y)$,
$\widetilde J(\xi,r)\nabla_\xi v(\xi,r)=\tangrad{\rho(y)} v(y)$,
and we let, for the sake of simplicity, $\widetilde J(\xi):=\widetilde J(\xi,0)$.
Then, we can rewrite
\begin{multline*}
\frac{\alpha}{\eta} \int_0^T\int_{\nOmint}\nabla\soluz\cdot
\nabla\bigg(\Phi_t\big(\pi_0(y),t\big)\bigg)
\di y\di t
=\frac{\alpha}{\eta} \int_0^T\int_{\nOmint}\tangrad{\rho(y)}\Phi_t\big(\pi_0(y),t\big)
\cdot \tangrad{\rho(y)}\soluz
\di y\di t
\\
=\frac{\alpha}{\eta} \int_0^T\int_{\wMemb}\int_{-\eps\eta/2}^{\eps\eta/2}
(\widetilde J(\xi,r)\nabla_{\xi}\Phi_t(\xi,0,t))^T
\widetilde J(\xi,r)\nabla_\xi\soluz|det J(\xi,r)| \di \xi\di r\di t
\\
= {\alpha\eps} \int_0^T\int_{\wMemb}
\Big(\widetilde J(\xi)\nabla_\xi\Phi_t\big(\xi,0,t\big)\Big)^T
\left(\frac{1}{\eps\eta} \int_{-\eps\eta/2}^{\eps\eta/2}
\widetilde J(\xi)\nabla_\xi\soluz\big(\xi,r,t\big)
\di r\right)\sqrt{g(\xi)}\di \xi\di t+I_2(\eta)
\\
=: I_1+I_2(\eta)\,,
\end{multline*}
where the superscript $^T$ denotes the transposed vector.
Obviously, due to the regularity of $\Memb$, also the matrix $\widetilde J$ is regular, so that
$\widetilde J(\xi,r)=\widetilde J(\xi,0)+O(\eta)$.

Clearly, using the energy estimate \eqref{eq:energy_a}, we obtain
\begin{equation*}
|I_2(\eta)|
\le  \gamma \frac{\eta}{\sqrt\eta}\left( \frac{1}{\eta}\int_0^T\int_{\nOmint}|\nabla\soluz|^2\di y
\di t\right)^{1/2}\sqrt\eta\le \gamma\eta\to 0\qquad\hbox{as $\eta\to 0$.}
\end{equation*}
On the other hand, again
by the energy estimate \eqref{eq:energy_a}, it follows that there exists a vector function
${\bf V}\in L^2(0,T;L^2(\wMemb))$ such that, up to a subsequence,
$$
\frac{1}{\eps\eta} \int_{-\eps\eta/2}^{\eps\eta/2}\widetilde J(\xi)\nabla_\xi\soluz(\xi,r,t)
\di r\rightharpoonup {\bf V}\,,
\qquad \hbox{weakly in $L^2(0,T;L^2(\wMemb))$,}
$$
so that
\begin{equation*}
I_1
\to {\alpha\eps} \int_0^T\int_{\wMemb}\Big(\widetilde J(\xi)\nabla_\xi\Phi_t(\xi,0,t)\Big)^T
{\bf V}\sqrt{g(\xi)}\di \xi\di t
={\alpha\eps} \int_0^T\int_{\Memb}\beltramigrad\Phi_t
\cdot{\bf V}\di \sigma\di t
\,.
\end{equation*}
It remains to identify ${\bf V}$ as the tangential gradient of the limit $u_\eps$; i.e.,
${\bf V} = \beltramigrad u_\eps$ on $\Memb$. To this aim, we consider a vector test function $\Psi\in\CC^1_c(\Om_T)$; we obtain
\begin{multline*}
\int_0^T\int_{\Memb}\beltramidiv\Psi\, u_\eps \di\sigma\di t
\longleftarrow
\int_0^T\int_{\Memb} \beltramidiv\Psi\left( \frac{1}{\eps\eta}\int_{-\eps\eta/2}^{\eps\eta/2}
\soluz(\yg+r\nu_\eps(\yg),t)\di r\right)\di \sigma\di t
\\
=-\int_0^T\int_{\Memb} \Psi \cdot\beltramigrad \left(\frac{1}{\eps\eta}\int_{-\eps\eta/2}^{\eps\eta/2}
\soluz(\yg+r\nu_\eps(\yg),t)\di r\right)\di \sigma\di t
\\
=-\int_0^T\int_{\wMemb} \Psi \cdot\left(\frac{1}{\eps\eta}\int_{-\eps\eta/2}^{\eps\eta/2}
\widetilde J(\xi)\nabla_\xi\soluz(\xi,r,t)\di r\right)\sqrt{g(\xi)}\di \xi\di t
\longrightarrow
\\
-\int_0^T\int_{\wMemb} \Psi \cdot{\bf V}\sqrt{g(\xi)}\di\xi\di t=
-\int_0^T\int_{\Memb} \Psi \cdot{\bf V}\di\sigma\di t\,,
 \end{multline*}
which implies that ${\bf V}=\beltramigrad u_\eps$.
Similarly, we can prove that
\begin{equation*}
\frac{\alpha}{\eta} \int_{\nOmint}\nabla\overline u_0\cdot
\nabla\bigg(\Phi\big(\pi_0(y),0\big)\bigg)
\di y\to
\alpha\eps \int_{\Memb}\beltramigrad\overline u_0
\cdot \beltramigrad\Phi(y,0) \di \sigma\,.
\end{equation*}
This proves that the limit for $\eta\to 0$ of equality \eqref{eq:a16}
yields \eqref{eq:weak_sol}; i.e., the concentration limit of $\soluz$ is the weak solution of system
\eqref{eq:PDEin}--\eqref{eq:InitData}.

By the uniqueness of solutions to the limit problem \eqref{eq:weak_sol}, the whole sequence $\{\soluz\}$ converges.
\end{proof}

\section{Homogenization of the microscopic problem}
\label{s:homog}

Our goal in this section is to describe the asymptotic behavior, as $\eps\to 0$, of the solution $u_\eps\in \spaziosoleps$ of problem \eqref{eq:PDEin}--\eqref{eq:InitData}.
\smallskip

From \cite[Theorem 2.8]{Amar:Andreucci:Gianni:Timofte:2017B}, we obtain the following result.

\begin{thm}\label{aprioriest}
If $\overline u_0\in H^1_0(\Om)\cap H^2(\Om)$,
then, for any $\eps\in(0,1)$, the variational problem $\eqref{eq:weak_sol}$ has a unique solution $u_\eps\in \spaziosoleps$. Moreover, there exists a constant $\gamma>0$,
  independent of $\varepsilon$, such that
\begin{equation}
\label{eq:energy}
\int_{0}^{T}\!\!\int_{\Om} \abs{\nabla u_\eps}^{2}
  \di x\di\tau
  +  \eps\sup_{t\in(0,T)}\int_{\Gamma^\eps} \abs{\nabla^B u_\eps}^{2} \di \sigma \le \gamma.
\end{equation}
\end{thm}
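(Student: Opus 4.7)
Existence and uniqueness are quoted directly from \cite[Theorem 2.8]{Amar:Andreucci:Gianni:Timofte:2017B}, so the real content is the $\eps$-uniform a priori estimate \eqref{eq:energy}. The plan is to derive it by a formal energy identity obtained by testing the equation with $u_\eps$ itself, and then justify that identity either by running the Galerkin scheme used in the cited existence proof or by a density/regularization argument in time. The crucial use of the hypothesis $\overline u_0\in H^2(\Om)$ will appear only at the very end, when estimating the boundary data on the thin interface $\Memb$.

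First, I would multiply \eqref{eq:PDEin} by $u_\eps$ and integrate over $\Omint\cup\Omout$. Since $[u_\eps]=0$ on $\Memb$ by \eqref{eq:FluxCont}, the boundary terms produced by integration by parts collapse to a single integral of the jump of the flux against $u_\eps$ on $\Memb$, and \eqref{eq:Circuit} rewrites this as $-\eps\alpha\int_\Memb \beltrami u_{\eps t}\,u_\eps\di\sigma$. Using \eqref{eq:a66} (recall that $\Memb$ is closed in the connected/disconnected case, and in the connected/connected case the geometric assumption in Subsection \ref{ss:geometric} ensures $\dist(\Memb,\partial\Om)\ge\gamma_0\eps$, so there is no boundary contribution), a further integration by parts on the surface produces $\eps\alpha\int_\Memb\beltramigrad u_{\eps t}\cdot\beltramigrad u_\eps\di\sigma=\tfrac{\eps\alpha}{2}\tfrac{\di}{\di t}\int_\Memb|\beltramigrad u_\eps|^2\di\sigma$. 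Integrating in $t\in(0,T)$ and using \eqref{eq:InitData}, one obtains the energy identity
\begin{equation*}
\int_0^t\!\!\int_\Om\lfbothe|\nabla u_\eps|^2\di x\di\tau
+\tfrac{\eps\alpha}{2}\int_\Memb|\beltramigrad u_\eps(\cdot,t)|^2\di\sigma
=\tfrac{\eps\alpha}{2}\int_\Memb|\beltramigrad\overline u_0|^2\di\sigma\,.
\end{equation*}

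The technical gap is that the admissible test functions in \eqref{eq:weak_sol} are smooth with $\Phi(\cdot,T)=0$, so $u_\eps$ cannot literally be inserted. I would bridge this in the standard pseudo-parabolic way: take a Galerkin approximation $u_\eps^m$ built on a basis of $\XX_0(\Om)$ (already used in \cite{Amar:Andreucci:Gianni:Timofte:2017B} to prove existence), in which $u_\eps^m$ is smooth in $t$, test directly with $u_\eps^m$, obtain the identity above at the Galerkin level, and pass to the limit $m\to\infty$ using weak lower semicontinuity of the squared $L^2$-norms on $\Om$ and on $\Memb$. Equivalently, one may regularize in time by Steklov averages of $u_\eps$, which are admissible in the weak form once combined with a cutoff $\psi(t)$ vanishing at $t=T$, and remove the averaging by standard arguments.

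The main obstacle is the uniform control of the right-hand side $\eps\int_\Memb|\beltramigrad\overline u_0|^2\di\sigma$. Here I would exploit the $H^2$-regularity of $\overline u_0$ via the scaled periodic trace inequality
\begin{equation*}
\eps\int_\Memb|f|^2\di\sigma\le\gamma\Bigl(\|f\|_{L^2(\Om)}^2+\eps^2\|\nabla f\|_{L^2(\Om)}^2\Bigr),
\end{equation*}
which follows (in either the connected/disconnected or the connected/connected geometry) by unfolding $\Memb$ onto $\intOset\times\Permemb$ and applying the trace inequality on the fixed reference cell. Applying it with $f=\nabla\overline u_0$ and using $|\beltramigrad\overline u_0|\le|\nabla\overline u_0|$ yields $\eps\int_\Memb|\beltramigrad\overline u_0|^2\di\sigma\le\gamma\|\overline u_0\|_{H^2(\Om)}^2$, uniformly in $\eps$. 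Combining with the energy identity, taking the supremum in $t\in(0,T)$ on the surface term and keeping the full time integral on the bulk term, and using $\lfbothe\ge\min(\lfint,\lfout)>0$, one obtains \eqref{eq:energy} with a constant $\gamma$ independent of $\eps$.
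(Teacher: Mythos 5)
Your proposal is correct and follows exactly the route the paper takes: existence/uniqueness is quoted from \cite[Theorem 2.8]{Amar:Andreucci:Gianni:Timofte:2017B}, the energy identity is the one obtained by (rigorously justified) testing with $u_\eps$ as in \cite[Proposition 2.2]{Amar:Andreucci:Gianni:Timofte:2017B}, and the $\eps$-uniform control of $\eps\int_{\Memb}|\beltramigrad\overline u_0|^2\di\sigma$ via the scaled cell-by-cell trace inequality is precisely the ``crucial tool'' \eqref{eq:a3} that the paper attributes to \cite[Section 3]{Amar:Gianni:2016A} in Remark \ref{r:r9}. No gaps worth flagging.
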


\begin{remark}\label{r:r9}
Notice that, as proven in \cite[Section 3]{Amar:Gianni:2016A}, the assumption
$\overline u_0\in H^1_0(\Om)\cap H^2(\Om)$
implies that, for every $\eps>0$, $\overline u_0\in H^1(\Memb)$ and
\begin{equation}\label{eq:a3}
\eps\int_{\Memb}|\beltramigrad \overline u_0|^2\di \sigma\leq \gamma\,,
\end{equation}
with $\gamma$ independent of $\eps$. This is the crucial tool in order to obtain \eqref{eq:energy}
above.
\end{remark}

The convergence results
stated in the next lemma are a consequence of the {\it a priori} estimates \eqref{eq:energy} and of the general compactness results obtained in Subsection \ref{ss:unfold} (see Theorem \ref{t:smalleps_grad_weak_conv} and Proposition \ref{p:p1}).

\begin{lemma} \label{conv}
 Let $u_\eps \in \spaziosoleps$ be the unique solution of problem $\eqref{eq:weak_sol}$. Then, up to a subsequence,
  still denoted by   $\eps$, there exist  $u\in L^2(0, T; H^1_0(\Om))$ and $w\in L^2(\Om_T; \perzero)$ with
   ${\mathcal{M}}_Y(w)=0$ and $\beltramigrad_y w \in L^2(\Om_T \times \Gamma)$ such that
 \begin{equation*}
  \begin{aligned}
    & u_\eps \rightharpoonup u \quad &\text{ weakly in $ L^2(0,T;H^1_0(\Om))$,}
    \\
      & \unfop (u_\eps)  \rightharpoonup u \quad &\text{ weakly in  $L^2\left(\Omega_T \times Y\right)$,}
      \\
   &  \unfop (\nabla u_\eps) \rightharpoonup \nabla u+\nabla_y w \quad &\text{ weakly in
   $L^2(\Omega_T \times Y)$,}
   \\
   &  \btsunfop (\beltramigrad u_\eps) \rightharpoonup \beltramigrad u+\beltramigrad_y w  \quad &\text{ weakly in $L^2(\Omega_T \times \Permemb)$.}
       \end{aligned}
       \end{equation*}
\end{lemma}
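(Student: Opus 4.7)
The plan is to derive the four convergences from the a priori estimate \eqref{eq:energy} by plugging it into the machinery already developed in Subsection \ref{ss:unfold}; essentially, the lemma is a direct application of Theorem \ref{t:smalleps_grad_weak_conv} together with Proposition \ref{p:p1}, once the appropriate uniform bounds have been extracted.

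First I would read off from \eqref{eq:energy} that $\{u_\eps\}$ is uniformly bounded in $L^2(0,T;H^1_0(\Om))$, since $\Om$ is bounded and Poincar\'e's inequality gives control of $u_\eps$ in $L^2$ by its gradient. Hence, up to a subsequence (not relabeled), there exists $u\in L^2(0,T;H^1_0(\Om))$ such that $u_\eps\wto u$ weakly in $L^2(0,T;H^1_0(\Om))$. This gives the first stated convergence.

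Next I would apply Theorem \ref{t:smalleps_grad_weak_conv}, whose hypotheses are precisely the weak convergence just established. It yields the existence of $w\in L^2(\Om_T; H^1_{\#}(Y))$ with $\saverage(w)=0$, such that along a further subsequence
\begin{equation*}
\unfop(u_\eps)\wto u\,,\qquad \unfop(\nabla u_\eps)\wto \nabla u+\nabla_y w\,,
\end{equation*}
both weakly in $L^2(\Om_T\times Y)$. These are exactly the second and third convergences in the lemma.

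Finally, to handle the tangential gradient on $\Memb$, I would invoke Proposition \ref{p:p1}. Its hypothesis \eqref{eq:a74}, namely $\eps\int_0^T\!\!\int_{\Memb}|\beltramigrad u_\eps|^2\di\sigma\di t\leq \const$, follows from the second part of \eqref{eq:energy} by integrating the supremum in $t$ over $(0,T)$. Proposition \ref{p:p1} then guarantees that the same $w$ produced by Theorem \ref{t:smalleps_grad_weak_conv} satisfies $\beltramigrad_y w\in L^2(\Om_T\times\Permemb)$ and
\begin{equation*}
\btsunfop(\beltramigrad u_\eps)\wto \beltramigrad u+\beltramigrad_y w\,,\qquad \text{weakly in } L^2(\Om_T\times\Permemb)\,,
\end{equation*}
which is the fourth convergence. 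The main (and only) subtlety is to make sure one uses \emph{the same} corrector $w$ in the bulk and on the interface: this identification is already built into the statement of Proposition \ref{p:p1}, which explicitly refers back to the function $\tildefun$ appearing in \eqref{eq:smalleps_grad_weak_conv_i}, so no further argument is needed. Extracting successive subsequences at each step and finally passing to a diagonal subsequence completes the proof.
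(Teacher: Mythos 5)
Your argument is correct and coincides with the paper's own justification: the paper proves Lemma \ref{conv} exactly by combining the energy estimate \eqref{eq:energy} (plus Poincar\'e) with Theorem \ref{t:smalleps_grad_weak_conv} for the bulk convergences and Proposition \ref{p:p1} for the tangential gradient on $\Memb$, the hypothesis \eqref{eq:a74} being supplied by the supremum term in \eqref{eq:energy}. Your remark that Proposition \ref{p:p1} already identifies the interface corrector with the bulk corrector $\tildefun$ is exactly the point that makes the statement consistent, so nothing is missing.
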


\begin{thm}\label{th-conv-unfold}
Let us set
\begin{equation*}
\begin{aligned}
{\mathcal W}_{\#}(Y) & =\{v\in H^1_{\#}(Y) \, \vert \,  {\mathcal M}_Y (v) = 0, \, \nabla_y^B v \in L^2(\Om_T \times \Permemb)\}
\\
{\mathcal V}&=L^2(0,T; H^1_0(\Om)) \times L^2(\Om_T; {\mathcal W}_{\#}(Y)).
\end{aligned}
\end{equation*}
The unique solution $u_\eps\in \spaziosoleps$ of the variational problem
$\eqref{eq:weak_sol}$ converges,
in the sense of Lemma \ref{conv}, to the unique solution $(u,w)\in {\mathcal{V}}$
of the following unfolded limit problem
\begin{equation} \label{unfold}
\begin{aligned}
& \displaystyle \int_0^ T\!\!\int_{\Omega\times Y}{} \lambda \, (\nabla u +\nabla_y w) \cdot
     (\nabla \varphi + \nabla_y \Psi)  \di x \di y \di t
 \\
   - \alpha & \displaystyle \int_0^T \!\!\int_{\Omega\times \Gamma}{} (\beltramigrad u+\beltramigrad_y w) \cdot (\beltramigrad \varphi_t +\beltramigrad_y \Psi_t )\di x \di \sigma \di t
     \\
     = \alpha &\displaystyle \int_{\Om \times \Gamma} {} \beltramigrad \overline{u}_0 \cdot (\beltramigrad \varphi (x,0) + \beltramigrad_y \Psi (x,y,0))\di x \di \sigma,
                  \end{aligned}
         \end{equation}
 for all $\varphi\in H^1 \big(0,T;H^1_0(\Om)\big)$ and $\Psi \in L^2(\Om_T; H^1_{\#}(Y))\cap
 H^1\left( 0,T; L^2(\Om, H^1(\Permemb))\right)$ with $\varphi(\cdot,T) = 0$ in $\Om$, $\Psi (\cdot, \cdot,T) = 0$ in $\Om \times Y$.
\end{thm}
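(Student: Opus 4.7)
My plan is to establish \eqref{unfold} by inserting two-scale oscillating test functions into the weak formulation \eqref{eq:weak_sol}, passing to the limit $\eps\to 0$ via the unfolding convergences collected in Subsection~\ref{ss:unfold} and Lemma~\ref{conv}, and then extending to the full class of admissible test pairs by density. Uniqueness of $(u,w) \in \mathcal V$ will follow from an energy argument applied to \eqref{unfold}, which in particular forces the entire sequence $\{u_\eps\}$ (and not merely a subsequence) to converge.

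Concretely, I would fix $\varphi\in\CC^\infty(\overline{\Om}_T)$ compactly supported in $\Om$ for every $t\in[0,T)$ with $\varphi(\cdot,T)=0$ and $\psi\in\CC^\infty_c(\Om_T;\CC^\infty_\#(Y))$, and use
$$\Phi^\eps(x,t) = \varphi(x,t) + \eps\,\psi\!\left(x,\tfrac{x}{\eps},t\right)$$
as an admissible test function in \eqref{eq:weak_sol}. Writing $\nabla\Phi^\eps = \nabla_x\varphi + \nabla_y\psi(x,x/\eps,t) + \eps\nabla_x\psi(x,x/\eps,t)$, Propositions~\ref{c:c1}--\ref{p:per_odc_fun} and Remark~\ref{r:fcapac_str_conv} yield $\unfop(\nabla\Phi^\eps)\to\nabla\varphi+\nabla_y\psi$ strongly in $L^2(\Om_T\times Y)$, and, using the periodic coincidence $\nu_\eps(x)=\nu(x/\eps)$ on $\Memb$ to express $\beltramigrad\Phi^\eps_t$ entirely through the microscale variable, $\btsunfop(\beltramigrad\Phi^\eps_t)\to\beltramigrad\varphi_t+\beltramigrad_y\psi_t$ strongly in $L^2(\Om_T\times\Permemb)$. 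The bulk term is rewritten by \eqref{eq:norms_n}, up to a vanishing boundary-layer remainder, as $\iiint_{\Om_T\times Y}\unfop(\lambda^\eps)\,\unfop(\nabla u_\eps)\cdot\unfop(\nabla\Phi^\eps)\di x\di y\di t$, and weak $\times$ strong convergence (combining $\unfop(\lambda^\eps)\to\lambda$, the weak limit $\unfop(\nabla u_\eps)\rightharpoonup \nabla u+\nabla_y w$ from Lemma~\ref{conv}, and the strong convergence just noted) produces the first line of \eqref{unfold}. For the Laplace--Beltrami term, the scaling $\eps$ in \eqref{eq:Circuit} exactly absorbs the factor $\eps^{-1}$ in \eqref{eq:ts_norms_n}, so that
$$\eps\alpha\!\int_0^T\!\!\int_\Memb\!\beltramigrad u_\eps\cdot\beltramigrad\Phi^\eps_t\di\sigma\di t = \alpha\!\iiint_{\Om_T\times\Permemb}\!\btsunfop(\beltramigrad u_\eps)\cdot\btsunfop(\beltramigrad\Phi^\eps_t)\di\sigma\di x\di t;$$
the weak convergence $\btsunfop(\beltramigrad u_\eps)\rightharpoonup \beltramigrad u+\beltramigrad_y w$ from Lemma~\ref{conv} then yields the second line of \eqref{unfold}, and the initial-datum term on the right-hand side is handled in the same manner, since $\overline{u}_0$ does not oscillate.

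A standard density argument extends \eqref{unfold} from smooth pairs $(\varphi,\psi)$ to arbitrary $(\varphi,\Psi)$ in the regularity class stated in the theorem. Uniqueness is obtained by testing the equation satisfied by the difference of two solutions with itself, after a Steklov time-mollification to make $\varphi_t,\Psi_t$ admissible: the coercive contribution $\int_0^T\iint_{\Om\times Y}\lambda\,|\nabla u+\nabla_y w|^2\di x\di y\di t$ combined with a Gronwall-type argument on the Laplace--Beltrami part drives the difference to zero, whence the whole sequence $\{u_\eps\}$ converges to the unique limit. The main technical obstacle I foresee is the strong convergence of $\btsunfop(\beltramigrad\Phi^\eps_t)$: one must verify that the boundary unfolding operator commutes, in the limit, with the tangential gradient on $\Memb$. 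This rests on the periodic matching $\nu_\eps(\cdot)=\nu(\cdot/\eps)$ on $\Memb$, which transports $\beltramigrad$ to the cell surface $\Permemb$, together with the scaling balance $\eps\cdot\eps^{-1}=1$ between \eqref{eq:Circuit} and \eqref{eq:ts_norms_n}---precisely the balance that keeps the Laplace--Beltrami contribution alive in the homogenized problem at scale $k=1$.
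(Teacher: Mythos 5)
Your proposal is correct and follows essentially the same route as the paper: the same oscillating test functions $\varphi+\eps\,(\text{corrector})$, the same unfolding and weak--strong limit passage balanced by the $\eps\cdot\eps^{-1}$ cancellation between \eqref{eq:Circuit} and \eqref{eq:ts_norms_n}, a density extension, and uniqueness via an energy identity for the difference of two solutions. The only detail worth spelling out in the uniqueness step is that, once the energy identity gives $\int_0^T\!\!\int_{\Om\times Y}\lambda\,|\nabla U+\nabla_y\WW|^2\di x\di y\di t\le 0$, one must use the $Y$-periodicity of $\WW$ (so that $\int_Y\nabla_y\WW\di y=0$ and the cross term integrates to zero) to conclude $\nabla U=0$ and $\nabla_y\WW=0$ separately; no Gronwall argument is actually needed, since the Laplace--Beltrami contribution enters with a favourable sign and can simply be dropped.
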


\begin{proof}
Preliminarily,
we note that equation \eqref{unfold} admits at most one solution. Indeed, setting $U=u_2-u_1$ and
$\WW=w_2-w_1$, where $(u_i,w_i)$, $i=1,2$, are two solutions of \eqref{unfold},
we obtain in a standard way from the equation written for $(U,\WW)$
\begin{equation*}
 \displaystyle \int_0^ T\!\!\int_{\Omega\times Y}{} \lambda |\nabla U +\nabla_y \WW|^2\di x \di y \di t
     +\frac{\alpha}{2} \displaystyle \int_{\Omega\times \Gamma}{}
     |\beltramigrad U+\beltramigrad_y \WW|^2(T)\di x \di \sigma =0\,.
         \end{equation*}
Hence, dropping the last integral (which is nonnegative) and using the $Y$-periodicity of $\WW$, we obtain
\begin{multline*}
 \int_0^ T\!\!\int_{\Omega\times Y} (|\nabla U|^2 +|\nabla_y \WW|^2)\di x \di y \di t
\\
= \int_0^ T\!\!\int_{\Omega\times Y} (|\nabla U|^2 +|\nabla_y \WW|^2)\di x \di y \di t
+ 2\int_0^ T\!\!\int_{\Omega} \left(\nabla U\cdot\int_{Y}\nabla_y \WW\di y\right) \di x \di t
\\
=\int_0^ T\!\!\int_{\Omega\times Y}|\nabla U +\nabla_y \WW|^2\di x \di y \di t
\leq0\,.
 \end{multline*}
 Therefore, taking into account that $U$ vanishes on $\partial\Om$ and $\WW$ has null mean average in $Y$,
 we get $U=\WW=0$; i.e., the asserted uniqueness.

In order to obtain the limit problem (\ref{unfold}), we choose in the variational formulation
 (\ref{eq:weak_sol}) the  admissible test function
 \begin{equation}\label{testfct}
 \Phi (x,t) =\varphi(x,t)+\varepsilon \phi (x,t) \psi \left( \displaystyle \frac{x}{\varepsilon}\right),
 \end{equation}
with $\varphi, \phi \in \CC^\infty([0,T]; \CC^\infty_c(\Om))$, $\varphi(\cdot,T) = \phi(\cdot,T) = 0$ in $\Om$ and $\psi\in \CC^\infty_{\#}(Y)$.

Then, by unfolding each term with the corresponding operator, we get
\[
 \displaystyle \int_{0}^{T}\!\!\int_{\Om\times Y}\unfop(\lfbothe) \unfop( \nabla u_{\eps})\cdot \unfop(\nabla\Phi) \di y \di x\di t
  -\alpha \int_0^T \!\!\int_{\Om\times\Permemb}\btsunfop (\beltramigrad u_{\eps}) \cdot \btsunfop (\beltramigrad \Phi_t) \di\sigma \di x \di t
  \]
  \begin{equation}\label{unfold:weak-sol}
 ={\alpha}\!\!\int_{\Om\times\Permemb} \btsunfop(\beltramigrad\overline u_0) \cdot \btsunfop(\beltramigrad\Phi(x,0))\di\sigma \di x  +R_\eps,
\end{equation}
where $R_\eps=o(1)$ as $\eps \rightarrow 0$. Our goal now is to pass to the limit with $\eps\to 0$ in \eqref{unfold:weak-sol}. By Remark \ref{r:r6}, Proposition \ref{c:c1} and
Proposition \ref{p:per_odc_fun}, we obviously  have
\begin{alignat}2
\label{conv-grad-Phi}
&\unfop (\nabla \Phi) \rightarrow \nabla \varphi + \nabla_y \Psi  \quad &\text{strongly in
$L^2(\Om_T \times Y)$;}
\\
\label{conv-beltramigrad-Phi}
&\btsunfop (\beltramigrad \Phi_t) \rightarrow \beltramigrad \varphi_t + \beltramigrad_y \Psi_t
\quad
&\text{strongly in $L^2(\Om_T \times \Permemb)$;}
\\
\label{conv-beltramigrad-Phi0}
&\btsunfop (\beltramigrad \Phi(x,0)) \rightarrow \beltramigrad \varphi(x,0)
+ \beltramigrad_y \Psi(x,0)\quad
&\text{strongly in $L^2(\Om_T \times \Permemb)$,}
\end{alignat}
where $\Phi$ is given in \eqref{testfct} and $\Psi (x,y,t)=\phi(x,t) \psi(y)$.
Therefore, by using the convergence results stated in Lemma \ref{conv} and recalling again \eqref{eq:per_osc_fun_ii} and
Proposition \ref{c:c1}, we obtain
\begin{multline*}
\int_0^ T \!\!\int_{\Omega\times Y}{} \lambda \, (\nabla u +\nabla_y w) \cdot
(\nabla \varphi + \nabla_y \Psi) \di x \di y \di t
\\
-\alpha  \int_0^T \!\!\int_{\Omega\times \Gamma}{} (\beltramigrad u+\beltramigrad_y w) \cdot (\beltramigrad \varphi_t +\beltramigrad_y \Psi_t )\di x \di \sigma \di t=
\\
   \alpha\int_{\Om \times \Gamma} {} \beltramigrad \overline{u}_0 \cdot (\beltramigrad \varphi (x,0) + \beltramigrad_y \Psi (x,y,0))\di x \di \sigma.
\end{multline*}
Standard density arguments lead us to (\ref{unfold}).
Moreover,
due to the uniqueness of $(u,w)\in {\mathcal{V}}$, all the above convergences hold true for
the whole sequence.
\end{proof}

\begin{remark}\label{r:r1}
Our goal now is to obtain the factorized formulation of the unfolded problem (\ref{unfold}). Let us point out that this formulation involves the time derivatives of the solution and of its corrector (in the distributional sense). Due to the presence of these terms generated by the dynamical boundary condition on $\Gamma^\eps$ in the microscopic problem, we have to introduce a non-standard type of cell functions containing memory terms. Our limit model can be compared with \cite{Amar:Andreucci:Bisegna:Gianni:2003a,Amar:Andreucci:Bisegna:Gianni:2004a,
Amar:Andreucci:Bisegna:Gianni:2010,Amar:Gianni:2016A}.
 \end{remark}

{\it Proof of Theorem \ref{strong-form}}
By taking first $\varphi=0$ and suitable test functions $\Psi$, in  the unfolded limit problem (\ref{unfold}), and then $\Psi=0$ with suitable choices of $\varphi$, we obtain
formally
\begin{alignat} 2
\label{homog-1_1}
&-\Div\!\Big (\!\lfav\!\nabla u\! +\!\! \displaystyle \int_Y\! \!\lfboth \nabla_y w \di y \!
\Big )\!\!-\!
\Div\!\Big  (\alpha\Big (\!\int_\Permemb \! \beltramigrad  u_t \di \sigma \!+\!\!
\int_\Permemb \!\!\beltramigrad_y w_t \di \sigma \Big)\! \Big )\!\! = \!0,
\!\!\! \!\!\!\!\!\!\!\!\!\!\!\!\!\!\!\!\!\!\!\!\!\!\!\!\!\!\!\!\!\!\!\!\!\!\!\!\!\!
&\text{in $\Omega_T$;}
\\
\label{homog-1_2}&-\Div_y \left (\lfboth (\nabla u + \nabla_y w ) \right )= 0 \,,
&\text{in $\Om_T \times (\Perint\cup \Perout)$;}
\\
\label{homog-1_3}&- \beltramidiv _y  \left (\alpha (\beltramigrad  u_t + \beltramigrad_y w_t) \right )=
\left [\lfboth ( \nabla u + \nabla_y w)\cdot \nu\right ] \,,
&\text{on $\Om_T \times \Permemb$;}
\\
\label{homog-1_4}& u=0\,,  &\text{on $\partial \Omega \times (0,T)$;}
\end{alignat}
complemented with the initial conditions
\begin{alignat} 2
\label{homog-1_6}& -\Div\left(\int_{\Permemb}(\beltramigrad u(x,0)+\beltramigrad_y w(x,y,0))
\di \sigma\right)=
-\Div\left(\int_{\Permemb}\beltramigrad \overline{u}_0\di \sigma\right)\!,
\!\!\!\!\!\!\!\!\!\!\!&\text{in $\Omega$.}
\\
\label{homog-1_5}& -\beltramidiv_y\left(\beltramigrad u(x,0)+\beltramigrad_y w(x,y,0)\right)=
-\beltramidiv_y\left(\beltramigrad \overline{u}_0\right)\!,
\!\!\!\!\!\!\!\!\!\!\!\!\!\!\!\!\!\!\!\!\!\!\!\! &\text{on $\Omega \times\Permemb$,}
\end{alignat}
Note that \eqref{homog-1_6} is the initial condition associated to \eqref{homog-1_1}, while
\eqref{homog-1_5} is the initial condition associated to \eqref{homog-1_3}.

Equation \eqref{homog-1_5}
for $w$ is uniquely solvable on each connected component $\Permemb_i$, $i=1,\dots,m$, of $\Permemb$
in terms of $u$ and $\overline u_0$, up to an additive $y$-constant function $C_i(x)$, depending on the
$i^{th}$ connected component. The functions $C_i(x)$ can be chosen identically equal to $0$,
since they do not play any role in \eqref{homog-1_5}. Moreover, it is easy to prove that the function
$w(x,y,0)=-\chi_0(y)\cdot[\nabla\overline u_0(x)-\nabla u(x,0)]$, with $\chi_0$ defined
by \eqref{cell-chi0_1}--\eqref{cell-chi0_5} below, is the required solution.
In particular, if $\Omout$ is connected and $\Omint$ is disconnected, then $\chi_0(y)=-y+c_i$, on each
connected component $\Permemb_i$, so that
\begin{equation}\label{homog-1_9}
\beltramigrad_y w(x,y,0)= \beltramigrad_y\left(y\cdot[\nabla\overline u_0(x)-\nabla u(x,0)]\right)
=\beltramigrad\overline u_0(x)-\beltramigrad u(x,0)
\,,
\end{equation}
and equation \eqref{homog-1_6} becomes simply an identity.
On the contrary, if $\Omout$ and $\Omint$ are both connected, then \eqref{homog-1_6} becomes
\begin{equation}\label{homog-1_10}
-\Div\big(C^0\nabla u(x,0)\big)= -\Div\big(C^0\nabla\overline u_0\big)\,,
\end{equation}
where $C^0$ is the matrix defined in \eqref{matrix-C} below.
%
%

The presence of the time derivatives in \eqref{homog-1_1} and  \eqref{homog-1_3}
suggests us looking for the corrector $w$ in the non-standard form given in \eqref{corrector}.
The factorization in terms of the cell function $\chi_0: Y \rightarrow \Bbb R^N$
is rather standard, though the problem which defines $\chi_0$ is not
(see \eqref{cell-chi0_1}--\eqref{cell-chi0_3}).
However, due to the dynamical boundary condition on $\Gamma^\eps$, apart from this function, we need to introduce two new cell functions, $\chi_1: Y \times (0,T) \rightarrow \R^N$ and $W: \Om \times Y \times (0,T) \rightarrow \Bbb R$ (see \eqref{cell-chi1_1}--\eqref{cell-chi1_init} and \eqref{cell-W_1}--\eqref{cell-W_4}).

More precisely, introducing \eqref{corrector} in \eqref{homog-1_2}--\eqref{homog-1_5},
we are led to the following local problems for $\chi_0$ and, respectively, $\chi_1$:
\begin{alignat}2
\label{cell-chi0_1}
-\Div_y \, \left (\lambda \, \nabla_y (y_j+\chi_0^j ) \right )& = 0\,,  \qquad &\text{in $\Perint \cup \Perout$;}
\\
\label{cell-chi0_4}
[\chi_0^j]& =0\,,  \qquad &\text{on $\Permemb$;}
\\
\label{cell-chi0_2}
-\beltramidiv _y \, \left (\alpha  \, \beltramigrad_y (y_j +\chi_0^j) \right )& = 0\,, \qquad &\text{on $\Permemb$;}
\\
\label{cell-chi0_5}
\int_{\Permemb_i} \left(\nabla_y\chi^j_0\right)^{\text{(out)}}\cdot\nu \di \sigma& =0\,,\ i=1,\dots,m\quad &
\\
\label{cell-chi0_3}
\int_Y \chi^j_0 \di y& =0\,,\quad &
\end{alignat}
and
\begin{alignat}2
\label{cell-chi1_1}
-\Div_y \, \left (\lambda \, \nabla_y \chi_1^j \right )& = 0\,, \qquad &\text{in $ (\Perint\cup \Perout)\times (0,T)$;}
\\
\label{cell-chi1_5}
[\chi_1^j]& =0\,,  \qquad &\text{on $\Permemb\times(0,T)$;}
 \\
 \label{cell-chi1_2}
-\alpha \beltramidiv_y \, \left (\beltramigrad_y \chi^j_{1t} \right )&=\left [\lambda\nabla_y \chi_1^j \cdot \nu \right ]\,, \qquad
&\text{on $\Permemb\times (0,T)$;}
\\
 \label{cell-chi1_3}
\beltramigrad _y \chi^j_1 (y,0)&=\beltramigrad_y v_j(y)\,, \quad &\text{on $\Permemb$,}
\\
\label{cell-chi1_4}
\int_Y \chi^j_1 \di y& =0\,,\quad &
\end{alignat}
where the initial data $\beltramigrad_y v_j=\beltramigrad_y v_j(y)$ is the solution of the problem
\begin{equation}\label{cell-chi1_init}
- \alpha \beltramidiv _y \, \left (\beltramigrad_y v_j(y) \right )=\left [\lambda \nabla_y (y_j+\chi_0^j ) \cdot \nu \right ],\qquad \text{on $\Permemb=\bigcup_{i=1}^{m}\Permemb_i$.}
\end{equation}
The scalar function $W=W(x,y,t)$ is defined as the solution of the problem
\begin{alignat}2
\label{cell-W_1}
-\Div_y \, \left (\lambda \, \nabla_y W \right )&= 0\,, \qquad &\text{in $(\Perint\cup \Perout)\times (0,T) $;}
\\
\label{cell-W_5}
[W]& =0\,,  \qquad &\text{on $\Permemb\times(0,T)$;}
 \\
\label{cell-W_2}
-\alpha \beltramidiv _y \, \left (\beltramigrad _y W_t \right )&=\left [\lambda (\nabla_y W \cdot \nu) \right ]\,, \qquad
&\text{on  $\Permemb\times (0,T) $;}
\\
\label{cell-W_3}
\beltramigrad _y W(x,y, 0)&=-\beltramigrad_y \chi_0 (y)\nabla \overline{u}_0 (x)
\,,\qquad &
\text{on $\Permemb$;}
\\
\label{cell-W_4}
\int_Y W \di y&=0\,.&
\end{alignat}
The  cell problem \eqref{cell-chi0_1}--\eqref{cell-chi0_3} admits a unique
solution $\chi_0^j\in H^1_\#(Y)\cap H^1(\Permemb)$, for
$j=1,\dots,N$. Indeed, it can be solved starting from
\eqref{cell-chi0_2}, which gives the boundary conditions on each connected component $\Permemb_i$
of $\Permemb$,
up to an additive constant $c_i$ on $\Permemb_i$, $i=1,\dots,m$.
Then, we solve the problem \eqref{cell-chi0_1} in $\Perout$, and we use \cite[Proposition 2.6]{Amar:Andreucci:Gianni:Timofte:2017B} in order to satisfy also \eqref{cell-chi0_5},
properly choosing the constants $c_i$ appearing in the first step.
In the next step, we solve \eqref{cell-chi0_1}--\eqref{cell-chi0_4} in each connected
component of $\Perint$.
In the last step, we can add a global suitable constant to the solution thus obtained,
in order to satisfy also \eqref{cell-chi0_3}.
Notice that the conditions \eqref{cell-chi0_5} are crucial, since they are
required in order that the right-hand side of \eqref{cell-chi1_init}
satisfies the compatibility conditions needed for the existence of a unique solution
(up to an additive constant, which plays no role in \eqref{cell-chi1_3}, so that it can be chosen
equal to zero).
Finally, the cell problems \eqref{cell-chi1_1}--\eqref{cell-chi1_4} and
\eqref{cell-W_1}--\eqref{cell-W_4} have unique solutions belonging to
$L^2\big(0,T;H^1_\#(Y)\cap H^1(\Permemb)\big)$
by \cite[Theorem 2.8 and Remark 4.7]{Amar:Andreucci:Gianni:Timofte:2017B}.
Actually, by standard bootstrap arguments, it follows that $\chi_0\in \CC^\infty_\#(Y)$ and
$\chi_1\in\CC^\infty\big(0,T;\CC^\infty_\#(Y)\big)$.
By introducing the particular form of the corrector \eqref{corrector} in the equation \eqref{homog-1_1},
we get, rearranging the terms,
\begin{multline}\label{homog-2}
-\Div\Big  (\alpha\int_\Permemb  \beltramigrad  u_t \di \sigma +
\alpha\int_\Permemb \beltramigrad_y  \chi_0 \nabla u_t \di \sigma \Big)
\\
-\Div\Big ( \lfav \nabla u + \int_Y \lfboth \nabla_y \chi_0 \nabla u \di y
+\alpha\int_\Permemb \beltramigrad_y \chi_1(y,0) \nabla u(x, t) \di \sigma\Big)
\\
-\Div\Big (
\int_0^t\!\!\int_Y  \lfboth \nabla_y \chi_1 (y, t-\tau) \nabla u(x, \tau) \di \tau \di y
+\alpha \displaystyle \int_0^t\!\!\int_\Permemb \beltramigrad_y \chi_{1t} (y, t-\tau)\nabla u (x, \tau) \di \tau \di \sigma\Big )
\\
 =\Div \Big(
 \int_Y \lfboth \nabla_y W \di y +  \alpha\int_\Gamma \beltramigrad_y W_t \di \sigma \Big).
\end{multline}
The principal part of the equation in the first line of \eqref{homog-2} can be written as follows
\begin{multline}\label{eq:a2}
-\Div \Big  (\alpha\int_\Permemb \beltramigrad  u_t \di \sigma
 + \alpha\int_\Permemb \beltramigrad_y  \chi_0 \nabla u_t \di \sigma \Big)=
\\
-\alpha \Div  \left ( \Big( \int_\Permemb (I -\nu \otimes   \nu)\Big) \nabla u_t+
 \Big ( \int_\Permemb  \beltramigrad_y\chi_0 \di \sigma \Big ) \nabla u_t \right ) =
 \\
-\alpha \Div \left ( \Big (\int_\Permemb (I -(\nu \otimes \nu ) + \beltramigrad_y\chi_0 ) \di \sigma \Big )
\nabla u_t  \right ).
\end{multline}
We define
\begin{equation} \label{matrix-A}
A^0 =\int_Y \!\!\!\lfboth \nabla_y \chi_0 \di y+
\alpha\!\! \int_\Permemb \!\!\beltramigrad_y \chi_1(y,0)\di \sigma
=\int_\Permemb\left\{\alpha\beltramigrad_y \chi_1(y,0)-[\lfboth]\chi_0\otimes\nu\right\}\di\sigma
\,,
\end{equation}
\begin{equation} \label{matrix-B}
 B^0 (t)=\alpha\! \!\int_\Permemb \! \! \beltramigrad_y \chi_{1t} (y, t)  \di \sigma \! +\!\!
 \int_Y \! \! \lfboth \nabla_y \chi_1 (y, t)  \di y
=\!\!\int_\Permemb\!\!\!\left\{\alpha\beltramigrad_y \chi_{1t} (y, t)-[\lfboth] \chi_1 (y, t)\otimes\nu \right\}
\!\!\di \sigma
\end{equation}
and
\begin{equation} \label{matrix-C}
C^0 = \alpha \, \displaystyle \int_\Permemb \left (I -\nu \otimes \nu  +
\beltramigrad_y\chi_0 \right ) \di \sigma=\alpha\int_{\Permemb}\beltramigrad_y(\chi_0+y)\di\sigma.
\end{equation}
Obviously, the matrix $B^0$ is of class $\CC^\infty(0,T)$.
Also, we set
\begin{equation} \label{F}
F \!=\!\Div \!\!\left (\int_Y\lfboth\nabla_y W \di y + \alpha\int_{\Permemb} \beltramigrad_y W_t \di \sigma\right  )\!\!=
\Div\!\!\int_{\Permemb} \!\!\!\left (\alpha\beltramigrad_y W_t -[\lfboth] W\nu\right)\di\sigma.
\end{equation}
The matrices $A^0,B^0$ and $C^0$ are well-defined.
By using the definition of the matrices $A^0,B^0$ and $C^0$,  we obtain that
the two-scale system \eqref{homog-1_1}--\eqref{homog-1_5} can be decoupled and we
get immediately the homogenized equation appearing in problem  \eqref{homog-pb}.
Concerning the boundary and the initial condition, we note that
the first one is simply a direct consequence of the fact that the pair $(u,w)\in {\mathcal V}$,
while the second one depends on the geometry.
Indeed, in the connected/disconnected case, taken into account that $C^0=0$, as follows from
Lemma \ref{r1} below, the homogenized problem does not require any initial condition,
according to the fact that the last equation in \eqref{homog-pb} disappears.
As observed at the beginning of the proof, this corresponds to the fact that
\eqref{homog-1_6} becomes, in this case, an identity.
On the contrary, in the connected-connected case $C^0$ is positive definite, as a
consequence of Lemma \ref{l:l2} below, so that an initial condition in \eqref{homog-pb} is in fact required.

\hfill$\Box$

\begin{Remark}\label{r:r8}
%
%
Assume that $\Omout$ and $\Omint$ are both connected and that the solution $u$
of \eqref{unfold} is sufficiently regular
so that, up to time $t=0$, we have that $u=0$ on $\partial\Om$.
Then, any solution $\big(u(\cdot, 0),w(\cdot,\cdot,0)\big)$ of the system \eqref{homog-1_6}--\eqref{homog-1_5},
complemented with the condition
$u(x,0)=0$ on $\partial\Om$, satisfies
\begin{equation}\label{eq:a30}
u(x,0)=\overline u_0(x)\,,\qquad \beltramigrad_yw(x,y,0)=0\,.
\end{equation}
Indeed, setting $U(x)=u_2(x,0)-u_1(x,0)$ and
$\WW(x,y)=w_2(x,y,0)-w_1(x,y,0)$, where $(u_i,w_i)$, $i=1,2$, are two solutions of the
system \eqref{homog-1_6}--\eqref{homog-1_5} satisfying ${u_1}_{\mid\partial\Om}=0=
{u_2}_{\mid\partial\Om}$, using $U$ and $\WW$ as
testing functions in \eqref{homog-1_6} and \eqref{homog-1_5} (written for $(u_i,w_i)$), respectively,
integrating by parts and subtracting the two equations, it follows
\begin{equation}\label{eq:a31}
\int_{\Omega\times \Gamma}
     |\beltramigrad_y \WW|^2\di x \di \sigma =-\int_{\Omega\times \Gamma}
     \beltramigrad_y \WW\cdot\beltramigrad U\di x \di \sigma
         \end{equation}
and
\begin{multline}\label{eq:a32}
\int_{\Omega\times \Gamma}
     |\beltramigrad U|^2\di x \di \sigma =
   \int_\Om\left(\int_{\Permemb} \beltramigrad U\di \sigma\right)\cdot\nabla U \di x  =
     \\
     -\int_{\Omega}
    \left( \int_{\Permemb}\beltramigrad_y \WW \di \sigma\right)\cdot\nabla U\di x
=-\int_{\Omega\times \Gamma}
     \beltramigrad_y \WW\cdot\beltramigrad U\di x \di \sigma\,.
              \end{multline}
Summing \eqref{eq:a31} and \eqref{eq:a32}, we obtain
\begin{equation*}
\int_{\Omega\times \Gamma}\left|\beltramigrad U+\beltramigrad_y \WW\right|^2\di x \di \sigma=0\,.
\end{equation*}
This implies $\beltramigrad_y \WW=-(I-\nu\otimes\nu)\nabla U=- \beltramigrad_y(y\cdot\nabla U)$, with $U$ depending only on $x$ and where we used the fact that $\beltramigrad_y y=I-\nu\otimes\nu$. Hence, there
exists a $y$-constant function $C(x)$ such that $\WW(x,y)+y\cdot\nabla U(x)=C(x)$ on $\Permemb$; but
exploiting the $Y$-periodicity of $ \WW$ and taking into account the geometrical setting,
we get $\nabla U=0$ in $\Om$ and $W(x,y)=C(x)$ on $\Permemb$.
Therefore, since $U$ vanishes on $\partial\Om$, it follows that $U=0$ in $\Om$. Moreover,
$\beltramigrad_y \WW=0$ on $\Permemb$. Hence, taking into account that ${\overline u_0}_{\mid{\partial\Om}}=0$
and that a pair satisfying \eqref{eq:a30} (that is a pair $(\overline u_0, w)$,
with $\beltramigrad_y w=0$) is a solution of \eqref{homog-1_6}--\eqref{homog-1_5}, the assertion
follows.
\end{Remark}

\begin{lemma}\label{l:l2}
The matrix $C^0$ is symmetric. Moreover, if we are
in the connected/connected case, then the matrix $C^0$ is also positive definite.
\end{lemma}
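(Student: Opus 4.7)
The plan is to first derive the manifestly symmetric and positive representation
\[
C^0_{ij}=\alpha\int_\Permemb\beltramigrad_y(y_i+\chi_0^i)\cdot\beltramigrad_y(y_j+\chi_0^j)\di\sigma,
\]
from which symmetry of $C^0$ and the identity $C^0\xi\cdot\xi=\alpha\int_\Permemb|\beltramigrad_y\phi_\xi|^2\di\sigma$ (with $\phi_\xi(y):=\xi\cdot y+\chi_0(y)\cdot\xi$) follow immediately, yielding positive semi-definiteness in both geometrical configurations. Strict positive definiteness in the connected/connected case will then be obtained by exploiting the topology of the periodic interface.

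To prove the identity, I would start from $C^0_{ij}=\alpha\int_\Permemb[\beltramigrad_y(y_j+\chi_0^j)]_i\di\sigma$. Using the tangentiality of $\beltramigrad_y(y_j+\chi_0^j)$ together with $\beltramigrad_y y_i=(I-\nu\otimes\nu)e_i$, the integrand may be rewritten as $\beltramigrad_y(y_j+\chi_0^j)\cdot\beltramigrad_y y_i$. I would then add the vanishing quantity $\int_\Permemb\beltramigrad_y(y_j+\chi_0^j)\cdot\beltramigrad_y\chi_0^i\di\sigma$, which is zero by testing the surface equation \eqref{cell-chi0_2} satisfied by $y_j+\chi_0^j$ against the $Y$-periodic function $\chi_0^i$ and integrating by parts via \eqref{eq:a66}, with any edge contributions on $\Permemb\cap\partial Y$ cancelling thanks to $Y$-periodicity of both $\chi_0^i$ and of the tangent flux $\beltramigrad_y(y_j+\chi_0^j)$. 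This gives the symmetric formula, and setting $\phi_\xi$ as above yields $C^0\xi\cdot\xi=\alpha\int_\Permemb|\beltramigrad_y\phi_\xi|^2\di\sigma\geq 0$ by bilinearity.

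To upgrade positive semi-definiteness to strict positive definiteness in the connected/connected case, suppose $C^0\xi\cdot\xi=0$: then $\beltramigrad_y\phi_\xi\equiv 0$ on $\Permemb$, so $\phi_\xi$ is constant on each connected component of $\Permemb$. Extending $\chi_0$ by $Y$-periodicity, $\phi_\xi$ is continuous on $\R^N$, locally constant on the periodic interface $\partial\Per$, and obeys the quasi-periodicity $\phi_\xi(y+e_k)=\phi_\xi(y)+\xi_k$. In the connected/connected setting, both $\Per$ and $\R^N\setminus\overline{\Per}$ are connected open sets sharing $\partial\Per$ as their common boundary; this forces $\partial\Per$ itself to be connected, for otherwise writing $\partial\Per=A\sqcup B$ as a disjoint union of nonempty closed pieces would give both $\Per\cup A$ and $\Per^c\cup A$ connected (since each lies between a connected open set and its closure), so that $\R^N\setminus B$ would be connected, and symmetrically $\R^N\setminus A$, which is incompatible with the two-component decomposition $\R^N\setminus\partial\Per=\Per\sqcup\Per^c$ once the separation properties of the Lipschitz interface are taken into account. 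Once $\partial\Per$ is known to be connected, $\phi_\xi$ is a single global constant on $\partial\Per$, and applying the shift at any $y\in\partial\Per$ (noting $y+e_k\in\partial\Per$ by $\ZZ^N$-invariance) yields $\xi_k=\phi_\xi(y+e_k)-\phi_\xi(y)=0$ for every $k$, hence $\xi=0$. The most delicate part of the argument is this final topological step, whose rigorous formulation relies on the Lipschitz regularity of $\partial\Per$ and on the structure of its $\ZZ^N$-orbit decomposition.
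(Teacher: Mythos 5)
Your proposal is correct and follows essentially the same route as the paper: both derive the symmetric representation $C^0_{hj}=\alpha\int_\Permemb\beltramigrad_y(y_h+\chi_0^h)\cdot\beltramigrad_y(y_j+\chi_0^j)\di\sigma$ from the surface cell equation \eqref{cell-chi0_2} and the tangentiality of $\beltramigrad_y$, and both conclude positive definiteness by showing that $\xi\cdot y+\xi\cdot\chi_0(y)$ cannot be locally constant on $\Permemb$ for $\xi\neq 0$. The only difference is that you spell out the final topological step (connectedness of the periodic interface plus the quasi-periodicity $\phi_\xi(y+\ee{k})=\phi_\xi(y)+\xi_k$), which the paper disposes of by simply invoking the periodicity of $\chi_0$ versus the non-periodicity of $y\mapsto\xi\cdot y$ ``because of our geometrical assumptions.''
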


\begin{proof}
Taking into account \eqref{cell-chi0_2}, let us compute
\begin{multline}\label{eq:a51}
\alpha \int_\Gamma \beltramigrad_y (\chi_0^h+y_h) \cdot \beltramigrad_y (\chi_0^j+y_j) \di \sigma
\\
=\alpha \int_\Gamma \beltramigrad_y (\chi_0^h+y_h) \cdot \beltramigrad_y \chi_0^j \di \sigma+
\alpha \int_\Gamma \beltramigrad_y (\chi_0^h+y_h) \cdot \beltramigrad_y y_j \di \sigma
\\
=\alpha \int_\Gamma \beltramigrad_y \chi_0^h \cdot ({\mathbf{e}}_j-\nu_j\nu)\di \sigma
+\alpha \int_\Gamma ({\mathbf{e}}_h-\nu_h\nu)\cdot ({\mathbf{e}}_j-\nu_j\nu)\di \sigma
\\
=\alpha \int_\Gamma \left(\beltramigrad_y \chi_0^h\right)^{}_j\di \sigma
+\alpha \int_\Gamma (\delta_{hj}-\nu_h\nu_j)\di \sigma=C^0_{hj}\,,
\end{multline}
where the last equality follows from definition \eqref{matrix-C}.
Hence, the symmetry of the matrix $C^0$ is proved.
\noindent
In order to prove the positive definiteness, we calculate
\begin{multline}\label{eq:a53}
\sum_{h,j=1}^{N}C^0_{hj} \xi_h\xi_j =
\alpha\int_\Permemb \sum_{h,j=1}^{N}
\nabla^B_y(\chi_0^h\xi_h+y_h\xi_h)\cdot\nabla^B_y(\chi_0^j\xi_j+y_j\xi_j)\di \sigma
\\
=\alpha\int_\Permemb \big|\sum_{h=1}^{N}\nabla_y^B(\chi_0^h\xi_h+y_h\xi_h)\big|^2\di \sigma
=\alpha\int_\Permemb \left|\nabla_y^B\left(\sum_{h=1}^{N}(\chi_0^h\xi_h+y_h\xi_h)\right)\right|^2\di \sigma
\geq 0\,.
\end{multline}
Assume, by contradiction, that the last integral is equal to zero for a nonzero vector $\xi=(\xi_1,\dots,\xi_N)$;
this implies that, a.e. on $\Permemb$,
$$
\nabla_y^B\left(\sum_{h=1}^{N}(\chi_0^h\xi_h+y_h\xi_h)\right)=0\,,
\qquad\text{which implies that}\qquad
\sum_{h=1}^{N}(\chi_0^h(y)\xi_h+y_h\xi_h)=C\,,
$$
for a suitable constant $C$. However, in this case we have
\begin{equation}\label{eq:a52}
C-\sum_{h=1}^{N}\chi_0^h(y)\xi_h=\sum_{h=1}^{N}y_h\xi_h\,,
\end{equation}
which leads to a contradiction, since the left-hand side of \eqref{eq:a52} is a periodic function on $\Permemb$ while the right-hand side
is not because of our geometrical assumptions. Hence, the last inequality in \eqref{eq:a53} is actually strict and, by standard arguments,
this is enough to prove that the homogenized matrix is positive definite.
\end{proof}

\begin{lemma}\label{r1}
If we are in the connected/disconnected case, then the matrix $C^0=0$.
\end{lemma}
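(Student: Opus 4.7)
The plan is to exploit the fact that in the connected/disconnected case every connected component $\Permemb_i$ of $\Permemb$ is the boundary of a connected component of $\Perint$, hence a closed $(N-1)$-dimensional manifold without boundary. On such manifolds the tangential divergence formula \eqref{eq:a66} applies, and this will force $\beltramigrad_y(\chi_0^j+y_j)$ to vanish identically on $\Permemb$.

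First, I would read the Laplace–Beltrami boundary condition \eqref{cell-chi0_2} as saying that, on each closed surface $\Permemb_i$, the function $y_j+\chi_0^j$ is Laplace–Beltrami harmonic. Multiplying by $y_j+\chi_0^j$ itself and integrating over $\Permemb_i$, the surface divergence theorem \eqref{eq:a66} (applied to the tangential vector field $(y_j+\chi_0^j)\,\beltramigrad_y(y_j+\chi_0^j)$, which has no boundary contribution because $\partial\Permemb_i=\emptyset$) gives
\begin{equation*}
0=\int_{\Permemb_i}\beltramidiv_y\!\bigl((y_j+\chi_0^j)\,\beltramigrad_y(y_j+\chi_0^j)\bigr)\,\di\sigma
=\int_{\Permemb_i}\bigl|\beltramigrad_y(y_j+\chi_0^j)\bigr|^2\di\sigma,
\end{equation*}
since the term involving $\beltrami_y(y_j+\chi_0^j)$ vanishes by \eqref{cell-chi0_2}. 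Consequently $\beltramigrad_y(y_j+\chi_0^j)=0$ on each $\Permemb_i$, and therefore on all of $\Permemb$.

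Plugging this identity into the representation \eqref{matrix-C} of the homogenized matrix,
\begin{equation*}
C^0 = \alpha\int_{\Permemb}\beltramigrad_y(\chi_0+y)\,\di\sigma = 0,
\end{equation*}
which proves the claim. Equivalently, one may note that on each closed component the relation $\beltramigrad_y(y_j+\chi_0^j)=0$ forces $\chi_0^j=-y_j+c_i^{(j)}$ on $\Permemb_i$ for some constant $c_i^{(j)}$ (this is precisely the local structure already exploited just before \eqref{homog-1_9}), and then the identity $\beltramigrad_y\chi_0=-\beltramigrad_y y=-(I-\nu\otimes\nu)$ substituted into \eqref{matrix-C} produces $C^0=0$ term by term.

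The only substantive point to verify is that each $\Permemb_i$ is indeed boundaryless, so that the surface divergence formula \eqref{eq:a66} can be applied without a $\partial\Permemb_i$ contribution; but this is built into the geometric setup of the connected/disconnected case, where we assumed $\Permemb\cap\partial Y=\emptyset$ and that cells intersecting $\partial\Om$ contain no inclusion. Everything else is a one‑line computation, so I do not expect any genuine obstacle.
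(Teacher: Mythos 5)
Your proof is correct and follows essentially the same route as the paper: the key point in both is that \eqref{cell-chi0_2} on each closed, boundaryless component $\Permemb_i$ forces $y_j+\chi_0^j$ to be constant there (equivalently $\beltramigrad_y(y_j+\chi_0^j)=0$ on $\Permemb$), whence the integrand in \eqref{matrix-C} vanishes. Your energy argument via \eqref{eq:a66} is just a more explicit justification of the step the paper states in one line.
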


\begin{proof}
Equation \eqref{cell-chi0_1} implies that, up to an additive constant, $\chi^j_0 =-y_j$ on each connected
component of $\Perint\cup\Permemb$ and hence
the matrix $C^0=0$.
\end{proof}

\begin{remark}\label{r:r3}
In the connected/disconnected case, the previous Lemma \ref{r1}
 implies that the principal part of equation \eqref{homog-2}
becomes
\begin{multline*}
-\Div \left ( \Big (\lfav I + \int_Y \lambda \nabla_y \chi_0 \di y  + \alpha \int_\Gamma \beltramigrad_y \chi_1(y,0) \di \sigma \Big ) \nabla u \right )
\\
= -\Div\left ( \Big (\lfav I+  A^0\Big)\nabla u \right ),
\end{multline*}
while this is not the case when $\Omint$ and $\Omout$ are both connected.
Moreover, since $\nabla_y\chi^j_0 =-\ej$ on $\Perint$, we may rewrite the homogenized matrix in the form
\begin{multline}\label{eq:a54}
\lfav I+\int_Y \lfboth \nabla_y\chi_0\di y+ \alpha \int_\Gamma \beltramigrad_y \chi_1(y,0) \di \sigma
\\
=\lfint|\Perint|I+\lfout|\Perout|I+\lfout\int_{\Perout}\nabla_y\chi_0\di y
-\lfint|\Perint|I
+ \alpha \int_\Gamma \beltramigrad_y \chi_1(y,0) \di \sigma
\\
=\int_{\Perout}\lfout(I+\nabla_y\chi_0)\di y
+ \alpha \int_\Gamma \beltramigrad_y \chi_1(y,0) \di \sigma
\\
= \lfout |\Perout|I+\lfout \int_{\Permemb} y\otimes\nu\di\sigma
+ \alpha \int_\Gamma \beltramigrad_y \chi_1(y,0) \di \sigma\,.
\end{multline}
\end{remark}

\begin{remark}\label{r:r4}
We note that in the case of a layered geometry, where the layers are for instance transversal to the direction $\eh$,
a similar argument as the one in Lemma \ref{r1} leads to prove that the matrix $C^0$ is not
identically equal to zero, but it degenerates in the direction $\eh$, since in this case we have
$\chi_0^h(y)=-y_h$ in $\Perint$ (up to an additive constant).
\end{remark}

\begin{lemma}\label{l:l1}
The matrix $\lfav I+A^0$ is symmetric and positive definite.
\end{lemma}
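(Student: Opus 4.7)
The plan is to establish the identity
\begin{equation*}
\eta^{T}(\lfav I + A^0)\xi = \int_Y \lambda\, \nabla_y \Phi^\xi \cdot \nabla_y \Phi^\eta \di y \qquad \forall\, \xi,\eta\in\R^N,
\end{equation*}
where $\Phi^\xi(y):=\sum_i \xi_i(y_i + \chi_0^i(y))$. Once this is shown, symmetry is transparent and positive definiteness follows from a periodicity argument mimicking Lemma \ref{l:l2}. First I would rewrite
\begin{equation*}
(\lfav I + A^0)_{ij} = \int_Y \lambda\, \partial_j(y_i + \chi_0^i) \di y + \alpha\int_\Permemb (\beltramigrad_y v_i)_j \di\sigma,
\end{equation*}
using $\beltramigrad_y \chi_1^i(y,0) = \beltramigrad_y v_i$, and compute $\eta^T(\lfav I + A^0)\xi$ by summing against $\eta_i\xi_j$. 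Setting $V^\eta := \eta_i v_i$ and exploiting that $\beltramigrad V^\eta$ is tangential (so $\beltramigrad V^\eta\cdot\xi = \beltramigrad V^\eta\cdot\beltramigrad(\xi\cdot y)$), then splitting $\xi\cdot y = \Phi^\xi - \xi\cdot\chi_0$, one obtains
\begin{multline*}
\eta^T(\lfav I + A^0)\xi = \int_Y \lambda\, \nabla\Phi^\xi\cdot\nabla\Phi^\eta \di y + \alpha\int_\Permemb \beltramigrad V^\eta\cdot\beltramigrad\Phi^\xi\di\sigma\\
- \Big(\int_Y \lambda\,\nabla(\xi\cdot\chi_0)\cdot\nabla\Phi^\eta \di y + \alpha\int_\Permemb \beltramigrad V^\eta\cdot\beltramigrad(\xi\cdot\chi_0)\di\sigma\Big).
\end{multline*}

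Next I would show that the parenthesised expression vanishes. Testing the bulk cell equation \eqref{cell-chi0_1} for $\Phi^\eta$ against the $Y$-periodic function $\xi\cdot\chi_0$ and integrating by parts in $\Perint$ and $\Perout$ separately, the boundary contributions on $\partial Y$ cancel (since $\lambda\nabla\Phi^\eta\cdot\nu_Y\,(\xi\cdot\chi_0)$ is periodic times antiperiodic on opposite faces), so
\begin{equation*}
\int_Y \lambda\,\nabla\Phi^\eta\cdot\nabla(\xi\cdot\chi_0)\di y = -\int_\Permemb [\lambda\nabla\Phi^\eta\cdot\nu](\xi\cdot\chi_0)\di\sigma.
\end{equation*}
On the other hand, since $V^\eta$ satisfies $-\alpha\beltramidiv_y\beltramigrad_y V^\eta = [\lambda\nabla\Phi^\eta\cdot\nu]$ on $\Permemb$ by \eqref{cell-chi1_init}, testing against $\xi\cdot\chi_0$ and applying \eqref{eq:a66} (as $\Permemb$ has no boundary on the torus) yields
\begin{equation*}
\alpha\int_\Permemb \beltramigrad V^\eta\cdot\beltramigrad(\xi\cdot\chi_0)\di\sigma = \int_\Permemb [\lambda\nabla\Phi^\eta\cdot\nu](\xi\cdot\chi_0)\di\sigma.
\end{equation*}
Summing the two identities annihilates the bracket.

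The decisive observation is that, by \eqref{cell-chi0_2}, one has $\beltramidiv_y\beltramigrad_y\Phi^\xi = 0$ on $\Permemb$; then \eqref{eq:a66} applied to $V^\eta\beltramigrad\Phi^\xi$ gives
\begin{equation*}
\alpha\int_\Permemb \beltramigrad V^\eta\cdot\beltramigrad\Phi^\xi\di\sigma = -\alpha\int_\Permemb V^\eta\beltramidiv_y\beltramigrad_y\Phi^\xi \di\sigma = 0.
\end{equation*}
Thus $\eta^T(\lfav I + A^0)\xi = \int_Y \lambda\,\nabla\Phi^\xi\cdot\nabla\Phi^\eta\di y$, which is manifestly symmetric. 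For $\xi\neq0$ the quadratic form $\int_Y\lambda|\nabla\Phi^\xi|^2\di y$ is nonnegative, and vanishes only if $\nabla\Phi^\xi = 0$ a.e.; then $\xi\cdot y + \xi\cdot\chi_0$ is constant on the connected set $\Perout$, and since $\xi\cdot\chi_0$ is $Y$-periodic while $\xi\cdot y$ is linear, the fact that $\Perout$ spans the torus in every direction forces $\xi = 0$ (exactly as in the final step of Lemma \ref{l:l2}).

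The main obstacle I anticipate is the careful sign bookkeeping when testing with the non-periodic function $\xi\cdot y$; the trick of splitting it as $\Phi^\xi-\xi\cdot\chi_0$ converts the non-periodic contributions on $\partial Y$ into jump contributions on $\Permemb$, which are precisely what cancels against the tangential surface term associated with $V^\eta$ via the two cell equations \eqref{cell-chi0_2} and \eqref{cell-chi1_init}.
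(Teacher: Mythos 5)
Your proof is correct and follows essentially the same route as the paper: the identity $\eta^{T}(\lfav I+A^0)\xi=\int_Y\lambda\,\nabla_y\Phi^\xi\cdot\nabla_y\Phi^\eta\di y$ is precisely the paper's \eqref{eq:calc6}, and your three integration-by-parts steps correspond exactly to the paper's \eqref{eq:calc1} (bulk equation \eqref{cell-chi0_1} tested against $\chi_0$), \eqref{eq:calc2} (the vanishing of $\int_\Permemb\beltramigrad_y(y_h+\chi_0^h)\cdot\beltramigrad_y\chi_1^j(y,0)\di\sigma$ via \eqref{cell-chi0_2}), and \eqref{eq:calc3} (\eqref{cell-chi1_init} tested against $\chi_0$). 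The only minor divergence is the coercivity step: the paper applies Jensen's inequality together with $\int_{\partial Y}\chi_0^h\,n_j\di\sigma=0$ to get the explicit bound $\min(\lfint,\lfout)|\xi|^2$, whereas your rigidity argument (a periodic function cannot differ from a nonzero linear one by a constant) gives only strict positivity of the quadratic form, which still suffices for positive definiteness by compactness of the unit sphere.
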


\begin{proof}
Taking into account \eqref{cell-chi0_1}, let us compute
\begin{equation}\label{eq:calc1}
0=-\int_{Y} \Div_y\big(\lfboth\nabla_y(\chi^j_0+y_j)\big)\chi_0^h\di y =
\int_Y\lfboth \nabla_y(\chi^j_0+y_j)\cdot\nabla_y\chi_0^h\di y
+\int_{\Permemb} [\lfboth\nabla_y(\chi^j_0+y_j)\cdot\nu]\chi_0^h\di \sigma\,.
\end{equation}
Moreover, by \eqref{cell-chi0_2}, it follows that
\begin{equation*}
0=-\alpha\int_{\Permemb} \beltrami_y (\chi^h_0+y_h)\chi_1^j(y,0)\di\sigma=
\alpha\int_{\Permemb} \beltramigrad_y(\chi^h_0+y_h)\cdot\beltramigrad_y\chi_1^j(y,0)\di\sigma\,,
\end{equation*}
i.e.
\begin{multline}\label{eq:calc2}
-\alpha\int_{\Permemb}\beltramigrad_y\chi_1^j(y,0)\cdot \beltramigrad_y\chi^h_0\di\sigma
=\alpha\int_{\Permemb}\beltramigrad_y\chi_1^j(y,0)\cdot \beltramigrad_yy_h\di\sigma
\\
=\alpha\int_{\Permemb}\beltramigrad_y\chi_1^j(y,0)\cdot ({\mathbf e}_h-\nu_h\nu)\di\sigma
=\alpha\int_{\Permemb}\Big(\beltramigrad_y\chi_1^j(y,0)\Big)_h\di\sigma\,.
\end{multline}
Next,  by \eqref{cell-chi1_init}, we get
\begin{multline}\label{eq:calc3}
0=-\alpha\int_{\Permemb} \beltrami_y \chi^j_1(y,0)\chi_0^h\di\sigma
-\int_{\Permemb}[\lfboth\nabla_y(\chi^j_0+y_j)\cdot\nu]\chi_0^h\di\sigma
\\
=\alpha\int_{\Permemb} \beltramigrad_y \chi^j_1(y,0)\cdot\beltramigrad_y\chi_0^h\di\sigma
+\int_{Y}\lfboth\nabla_y(\chi^j_0+y_j)\cdot\nabla_y\chi_0^h\di y
\end{multline}
where, in the last equality, we used \eqref{eq:calc1}.
Finally, we check directly
\begin{equation}\label{eq:calc5}
\int_{Y}\lfboth\nabla_y(\chi^j_0+y_j)\cdot\nabla_y y_h\di y
= \int_Y \lfboth\left(\pder{\chi^j_0}{y_h}+\delta_{hj}\right)\di y\,.
\end{equation}
Then, by \eqref{eq:calc2}--\eqref{eq:calc5}, we obtain
\begin{multline}\label{eq:calc6}
\int_{Y}\!\!\lfboth\nabla_y(\chi^j_0+y_j)\cdot\nabla_y (\chi^h_0+y_h)\di y
=\int_{Y}\!\!\lfboth\nabla_y(\chi^j_0+y_j)\cdot\nabla_y \chi^h_0\di y+
\int_{Y}\!\!\lfboth\nabla_y(\chi^j_0+y_j)\cdot\nabla_y y_h\di y
\\
=-\alpha\int_{\Permemb}\beltramigrad_y \chi^j_1(y,0)\cdot\beltramigrad_y \chi^h_0\di\sigma
+\lambda_0\delta_{hj}+\int_Y \lfboth\pder{\chi^j_0}{y_h}\di y
\\
=\alpha\int_{\Permemb}\Big(\beltramigrad_y \chi^j_1(y,0)\Big)_h\di\sigma
+\lambda_0\delta_{hj}+\int_Y \lfboth\pder{\chi^j_0}{y_h}\di y
=\lfav \delta_{hj}+A^0_{jh}\,,
\end{multline}
where, in the last equality, we recall \eqref{matrix-A}. Therefore,
the symmetry of the matrix $\lfav I+A^0$ is proved.

\noindent
In order to prove its positive definiteness, we proceed as follows:
setting $\lfboth_{min} =\min(\lfint,\lfout)$ and
using Jensen's inequality, we obtain
\begin{equation}\label{eq:a26}
\begin{aligned}
&\hphantom{=}
\sum_{h,j=1}^{N}(\lfav I  + A^0)_{hj} \xi_h\xi_j =
\int_Y \lfboth\sum_{h,j=1}^{N}\nabla_y(\chi_0^h\xi_h+y_h\xi_h)\cdot
\nabla_y(\chi_0^j\xi_j+y_j\xi_j)\di y
\\
& \geq \lfboth_{min} \int_Y \big|\sum_{h=1}^{N}\nabla_y(\chi_0^h\xi_h+y_h\xi_h)\Big|^2\di y
\geq \lfboth_{min}\left|\int_Y \sum_{h=1}^{N}\nabla_y(\chi_0^h\xi_h+y_h\xi_h)\di y\right|^2
\\
& = \lfboth_{min}\sum_{j=1}^{N}\left( \sum_{h=1}^{N}(\xi_h\int_Y
\frac{\partial\chi_0^h}{\partial y_j}\di y+\delta_{hj}\xi_h)\right)^2
\\
&= \lfboth_{min}\sum_{j=1}^{N}\left( \sum_{h=1}^{N}\xi_h\int_{\partial Y}
\chi_0^h\,n_j\di \sigma+\xi_j\right)^2
= \lfboth_{min}|\xi|^2
\end{aligned}
\end{equation}
where we have denoted by $n=(n_1,\dots,n_N)$ the outward unit normal to $\partial Y$.
Indeed, we
remark that the last integral vanishes because of the periodicity of the cell
function $\chi_0^h$.
This proves that the homogenized matrix $\lfav I+A^0$
is positive definite and concludes the lemma.
\end{proof}

\begin{remark}\label{r:r2}
Notice that in the connected-connected or connected-disconnected geometry, since  $C^0$ and/or $A^0$
are positive definite matrices, problem \eqref{homog-pb} is well-posed (see \cite{Amar:Andreucci:Bisegna:Gianni:2004b}
for the case $C^0=0$, while the case $C^0\not=0$ will be treated in a forthcoming paper).
\end{remark}

\section{Other scalings}
\label{s:scale}

In this section, we will consider the homogenization of our microscopic problem
\eqref{eq:PDEink}--\eqref{eq:InitDatak}, whose weak formulation is the following
\begin{multline}\label{eq:weak_solk}
  \int_{0}^{T}\!\!\int_{\Om} \lfbothe \nabla u_{\eps}\cdot\nabla\Phi \di x\di t
  -{\eps^k\alpha}\int_0^T\!\!\int_{\Memb} \beltramigrad u_{\eps}\cdot \beltramigrad \Phi_t \di\sigma\di t
  \\
  ={\eps^k\alpha}\int_{\Memb} \beltramigrad\overline u_{0\eps}\cdot\beltramigrad\Phi(x,0)\di\sigma
  +\int_{0}^{T}\!\!\int_{\Om} f \Phi\di x\di t\,,
\end{multline}
for every test function $\Phi\in \CC^\infty(\overline\Om_T)$ such that $\Phi$
has compact support in $\Om$ for every
$t\in[0,T)$ and $\Phi(\cdot,T)=0$ in $\Om$. The corresponding energy estimate is
\begin{equation}
\label{eq:energyk}
\int_{0}^{T}\int_{\Om} \abs{\nabla u_\eps}^{2}
  \di x\di\tau
  +  \eps^k\sup_{t\in(0,T)}\int_{\Memb} \abs{\beltramigrad u_\eps}^{2} \di \sigma \le \gamma\,,
\end{equation}
where we used the assumption $\overline u_{0\eps}= \eps^{(1-k)/2}_{} \overline u_0$, with
$\overline u_0\in H^1_0(\Om)\cap H^2(\Om)$.

In Section \ref{s:homog} we have studied the case $k=1$, which seems to be the most physical one,
since it is the only case where the limit problem keeps memory
of the physical properties of the active membranes.
Here, we will consider the two cases $k>1$ and $k<1$, respectively,
where no memory of $\alpha$ remains in
the homogenized equation. However, in some cases, memory of the geometry or of $\lfint$ is kept.
In particular, when $k<1$ and we are in the connected/connected case, the limit solution is identically
equal to $0$, even if the initial datum and the source are not null; notice that this is not the case in the
connected/disconnected geometry.

\subsection{Case $k<1$: proof of Theorems \ref{t:casoKpiccolo} and \ref{t:casoKpiccolo_bis}}
\label{ss:minore1}
When $k<1$, from the energy estimate \eqref{eq:energyk}, we obtain that the
estimate \eqref{eq:energy} is satisfied as well, so that all the results in Lemma \ref{conv}
hold. In particular, we still obtain that $\unfop(\nabla u_\eps)\wto \nabla u+\nabla_y w$
weakly in $L^2(\Om_T\times Y)$ and
$\btsunfop(\beltramigrad u_\eps)\wto \beltramigrad u+
\beltramigrad_y w$ weakly in $L^2(\Om_T\times \Permemb)$.

Moreover, let us take in the weak formulation \eqref{eq:weak_solk} a test function of the type
$\Phi(x,t)= \eps\varphi(x,t)\psi(\eps^{-1}x)$,
with  $\varphi\in \CC^\infty(\overline\Om_T)$ such that $\varphi$
has compact support in $\Om$ for every $t\in[0,T)$, $\varphi(\cdot,T)=0$ in $\Om$ and
$\psi\in \CC^\infty_\#(Y)$ with $supp (\psi)\subset\subset \Perint\cup\Perout$.
Unfolding and then passing to the limit for $\eps\to 0$, we obtain
$$
\int_{0}^{T}\!\!\int_{\Om\times Y}\lfboth(\nabla u+\nabla_y w)\cdot\nabla_y\psi\varphi
 \di y\di x\di t=0\,,
$$
i.e.,
\begin{equation}\label{eq:a25}
-\Div_y\big(\lfboth(\nabla u+\nabla_y w)\big)=0\,,\qquad \text{in $\Perint\cup\Perout$,}
\end{equation}
with $w\in L^2(\Om_T;H^1_\#(Y))$.
Next, take in the weak formulation \eqref{eq:weak_solk} a test function of the type
$\Phi(x,t)= \eps^{2-k}\varphi(x,t)\psi(\eps^{-1}x)$,
with  $\varphi$ as before and $\psi\in \CC_\#^\infty(Y)$.
Unfolding and passing to the limit for $\eps\to 0$, we obtain, owing
to our assumption $\overline u_{0\eps}=\eps^{(1-k)/2}_{} \overline u_0$, $$
\alpha\int_0^T\!\!\int_{\Om\times \Permemb} (\beltramigrad u+\beltramigrad_y w)
\cdot \beltramigrad_y \psi \varphi_t \di\sigma\di x\di t =0\,,
$$
i.e.,
$$
\begin{aligned}
& -\alpha\beltramidiv_y(\beltramigrad u_t+\beltramigrad_y w_{t})=0\,,\qquad &\text{on $\Permemb\times(0,T)$,}\\
&-\alpha\beltramidiv_y(\beltramigrad u(0)+\beltramigrad_y w(0))=0\,,\qquad &\text{on $\Permemb$,}
\end{aligned}
$$
which gives
$$
-\alpha\beltramidiv_y(\beltramigrad u+\beltramigrad_y w)=0\,,\qquad \text{on $\Permemb\times(0,T)$.}
$$
This implies that we can factorize $w(x,t,y)=\chi_0^j(y)\partial_j u(x,t)$, where
$\chi_0^j$ satisfies \eqref{cell-chi0_1}--\eqref{cell-chi0_2} and \eqref{cell-chi0_3}.
Moreover,condition \eqref{cell-chi0_5} is automatically satisfied in the connected/connected case,
since $\Permemb$ has only one connected component, while it is satisfied in the connected/disconnected case
thanks to \cite[Proposition 2.6]{Amar:Andreucci:Gianni:Timofte:2017B}.
\medskip

Now we have to proceed separately in the two cases,
since we have to consider different test functions. Indeed, the test function which we will use in
the connected/connnected case will not give any information in the connected/disconnected case
(see Remark \ref{r:r11}), while the test function
which we will use in this last case cannot be constructed in the connected/connected case.

\medskip

{\it{Proof of Theorem \ref{t:casoKpiccolo}.}}
Let us take in the weak formulation \eqref{eq:weak_solk} a test function of the type
$\Phi(x,t)= \eps^{1-k}\varphi(x,t)$, with  $\varphi\in \CC^\infty(\overline\Om_T)$ such that $\varphi$
has compact support in $\Om$ for every $t\in[0,T)$, $\varphi(\cdot,T)=0$ in $\Om$.
Unfolding and passing to the limit for $\eps\to 0$, we obtain
$$
-\alpha\int_0^T\!\!\int_{\Om\times\Permemb} (\beltramigrad u+\beltramigrad_y w)
\cdot \beltramigrad \varphi_t \di x\di\sigma\di t=0\,,
$$
i.e.,
$$
\begin{aligned}
& -\alpha\Div\int_{\Permemb}(\beltramigrad u_t+\beltramigrad_y w_{t})\di\sigma=0\,,\qquad &\text{on $\Om_T$,}\\
&-\alpha\Div\int_{\Permemb}(\beltramigrad u(0)+\beltramigrad_y w(0))=0\,,\qquad &\text{on $\Om$,}
\end{aligned}
$$
which gives
$$
-\alpha\Div\left(\int_{\Permemb}(\beltramigrad u+\beltramigrad_y w)\di\sigma\right)=
0\,,\qquad\text{on $\Om_T$.}
$$
Inserting in the previous equation the factorization of $w$ in terms of the cell functions, we
obtain
\begin{equation}\label{eq:a21}
-\Div\left(\big(\alpha\int_{\Permemb}(I-\nu\otimes    \nu+\beltramigrad_y\chi_0)\di\sigma\big)
\nabla u\right)=0\,,
\end{equation}
which, recalling \eqref{matrix-C}, can be rewritten as $-\Div(C^0\nabla u)=0$.
Taking into account that $u_{\mid\partial\Om}=0$ and that in the connected/connected case
the matrix $C^0$ is positive definite by Lemma \ref{l:l2}, it follows that $u\equiv 0$ in $\Om$,
so that the whole sequence $\{u_\eps\}$ converges to zero.

\hfill $\Box$

\begin{remark}\label{r:r10}
Notice that the previous result holds true though a non-zero source term appears in
\eqref{eq:PDEink}. Moreover, according to \eqref{eq:ts_norm_bound} and
from the energy estimate \eqref{eq:energyk}, we also get
\begin{equation*}
\int_{0}^{T}\!\!\int_{\Om\times\Permemb} \abs{\btsunfop (\beltramigrad u_\eps)}^2\di\sigma\di t
\leq\eps\int_{0}^{T}\!\!\int_{\Gamma^\eps} \abs{\nabla^B u_\eps}^{2} \di \sigma \di t\le \gamma \eps^{1-k}
\to 0\,,\qquad\text{for $\eps\to 0$,}
\end{equation*}
which gives a rate of convergence to zero of $\Vert \btsunfop (\beltramigrad u_\eps)\Vert_{L^2(\Om_T\times\Permemb)}$.
Hence, passing to the limit and taking into account the lower semicontinuity of the
norm with respect to the weak convergence, it follows that
\begin{equation}\label{eq:a68}
\Vert \beltramigrad u+\beltramigrad_y w\Vert^{}_{L^2(\Om_T\times\Permemb)} \leq
\liminf_{\eps\to 0}\Vert \btsunfop(\beltramigrad u_\eps)\Vert^{}_{L^2(\Om_T\times\Permemb)}
=0\,.
\end{equation}
\end{remark}

\begin{remark}\label{r:r11}
Notice that when we are in the connected/disconnected geometrical setting, as pointed out
in Lemma \ref{r1}, $\chi_0^j=-y_j$, up to an additive constant on each connected component of $\Perint\cup\Permemb$,
so that the matrix $C^0=0$. Hence, equations
\eqref{eq:a21} and \eqref{eq:a68} do not give any information on $u$.
\end{remark}

{\it Proof of Theorem \ref{t:casoKpiccolo_bis}.}
As in \cite[Proof of Lemma 4.1]{Cioranescu:Damlamian:Li:2013}, let us take in the weak formulation \eqref{eq:weak_solk} a test function of the type
$\Phi(x,t)=\varphi(x,t)\big(1-\psi(\eps^{-1}x)\big)+\average(\varphi)\psi(\eps^{-1}x)$,
with  $\varphi\in \CC^\infty(\overline\Om_T)$ such that $\varphi$
has compact support in $\Om$ for every $t\in[0,T)$, $\varphi(\cdot,T)=0$ in $\Om$,
and $\psi\in \CC^\infty_c(Y)$, with $\psi=1$ on $\Perint\cup\Permemb$.
Thus we obtain
\begin{multline*}
  \int_{0}^{T}\!\!\int_{\Om} \lfbothe \nabla u_{\eps}\cdot\big\{\nabla\varphi(1-\psi)+\frac{1}{\eps}(\average(\varphi)-\varphi)\nabla_y\psi\big\}
  \di x\di t
  \\
  =\int_{0}^{T}\!\!\int_{\Om} f \big\{\varphi(1-\psi)+\average(\varphi)\psi\big\}\di x\di t\,.
\end{multline*}
Then, unfolding and passing to the limit for $\eps\to 0$, it follows
\begin{equation}\label{eq:a24}
  \int_{0}^{T}\!\!\int_{\Om} \!\!\int_Y\lfboth (\nabla u+\nabla_y\genfun)
  \cdot\big\{\nabla\varphi(1-\psi)-(y_M\cdot\nabla\varphi)\nabla_y\psi\big\}
  \di x\di y\di t
  =\int_{0}^{T}\!\!\int_{\Om} f \varphi\di x\di t\,,
\end{equation}
where we have taken into account that $\varphi(1-\psi)+\average(\varphi)\psi\to \varphi$, strongly in $L^2(\Om_T)$
and \eqref{eq:a23} holds.
Notice that, by the identity $\nabla\varphi\psi+(y_M\cdot\nabla\varphi)\nabla_y\psi=\nabla_y\big((y_M\cdot\nabla\varphi)\psi\big)$,
equality \eqref{eq:a24} can be rewritten in the form
\begin{multline*}
  \int_{0}^{T}\!\!\int_{\Om} \!\!\int_Y\lfboth (\nabla u+\nabla_y\genfun)
  \cdot\nabla\varphi\di x\di y\di t -  \int_{0}^{T}\!\!\int_{\Om} \!\!\int_Y\lfboth (\nabla u+\nabla_y\genfun) \nabla_y\big((y_M\cdot\nabla\varphi)\psi\big)  \di x\di y\di t
  \\
  =\int_{0}^{T}\!\!\int_{\Om} f \varphi\di x\di t\,,
\end{multline*}
which becomes
\begin{multline*}
  \int_{0}^{T}\!\!\int_{\Om} \!\!\int_Y\lfboth (\nabla u+\nabla_y\genfun)
  \cdot\nabla\varphi\di x\di y\di t
  + \int_{0}^{T}\!\!\int_{\Om} \!\!\int_{\Permemb}[\lfboth (\nabla u+\nabla_y\genfun)\cdot\nu]
 (y_M\cdot\nabla\varphi)\di x\di \sigma\di t
  \\
  =\int_{0}^{T}\!\!\int_{\Om} f \varphi\di x\di t\,,
\end{multline*}
as a consequence of \eqref{eq:a25}. Finally, taking into account the factorization of $\genfun(x,t,y) = \chi_0^j(y)\partial_ju(x,t)$,
with $\chi_0$ satisfying \eqref{cell-chi0_1}--\eqref{cell-chi0_3} and recalling that in the connected/dis\-con\-nected case, $\nabla_y\chi_0(y)=-I$
on $\Perint$, we obtain the homogenized equation
$$
-\Div\left(\Big(\int_{\Perout}\!\!\!\lfout(I+\nabla_y\chi_0)\di y+
\int_{\Permemb}\!\!\!\lfout\Big((\nu+(\nabla_y\chi_0)^{\rm{out}}\nu)\otimes y_M\Big)\di\sigma\Big)
\nabla u\right)\!=\!f\,.
$$
In order to prove that the homogenized matrix
$$
A^{\rm{hom}}:=\int_{\Perout}\lfout(I+\nabla_y\chi_0)\di y+\int_{\Permemb}\lfout\big((\nu+(\nabla_y\chi_0)^{\rm{out}}\nu)
\otimes y_M\big)\di\sigma
$$
is symmetric and positive definite, we proceed as follows. By \eqref{eq:calc1}, we get
\begin{multline*}
\int_{Y}\!\!\lfboth\nabla_y(\chi^j_0+y_j)\cdot\nabla_y (\chi^h_0+y_h)\di y
\\
=\int_{Y}\!\!\lfboth\nabla_y(\chi^j_0+y_j)\cdot\nabla_y \chi^h_0\di y+
\int_{Y}\!\!\lfboth\nabla_y(\chi^j_0+y_j)\cdot\nabla_y y_h\di y
\\
=-\int_{\Permemb}[\lfboth \nabla_y(\chi^j_0+y_j)\cdot \nu] \chi^h_0\di\sigma
+\int_Y \lfboth\big(\pder{\chi^j_0}{y_h}+\delta_{hj}\big)\di y
\\
=\sum_{i=1}^{m}\int_{\Permemb_i}\lfout\big((\nabla_y\chi^j_0)^{\rm{out}}+\ej\big)\cdot \nu (y_h+c_i)\di\sigma
+\int_{\Perout} \lfout\big(\pder{\chi^j_0}{y_h}+\delta_{hj}\big)\di y
\\
=\int_{\Permemb}\lfout\big((\nabla_y\chi^j_0)^{\rm{out}}\cdot \nu+\nu_j\big) (y_M)_h\di\sigma
+\int_{\Perout} \lfout\big(\pder{\chi^j_0}{y_h}+\delta_{hj}\big)\di y
=A^{\rm{hom}}_{jh}\,,
\end{multline*}
where $\Permemb_i$, $i=1,\dots,m$, are the connected component of $\Permemb$ and,
in the fourth equality, $y_h+c_i$ has been replaced with $(y_M)_h$, since
we have taken into account that $\int_{\Permemb}\nu_j\di\sigma=0$ and \eqref{cell-chi0_5} holds.
This proves the symmetry; the positive definiteness now follows directly from \eqref{eq:a26}.
Hence, $u$ is uniquely determined, which implies that the whole sequence $\{u_\eps\}$ converges.

\hfill$\Box$
\medskip

\begin{remark}\label{r:r12}
Notice that the homogenized solution $u$ does not depend on $\alpha$ nor on $\lfint$; i.e.,
it does not depend on the physical properties of the interface
and of the inclusions, being affected only by the physical properties of the surrounding matrix.
\end{remark}

\subsection{Case $k>1$: proof of Theorem \ref{t:casoKgrande}}
\label{ss:maggiore1}
Setting $v_\eps = \eps^{(k-1)/2}u_\eps$, from the energy estimate \eqref{eq:energyk}
it follows that $\btsunfop(\beltramigrad v_\eps)$
is bounded in $L^2(\Om_T\times\Permemb)$
and, up to a subsequence, $\unfop(\nabla u_\eps)\wto \nabla u+\nabla_y w$ weakly
in $L^2(\Om_T\times Y)$.
Then, taking in the weak formulation \eqref{eq:weak_solk} a test function of the type
$\Phi(x,t)=\varphi(x,t)+\eps\phi(x,t)\psi(x,\eps^{-1}x,t)$ with $\varphi,\phi\in \CC^\infty(\Om_T)$ such that $\varphi,\phi$
have compact support in $\Om$ for every $t\in[0,T)$, $\varphi(\cdot,T)=\phi(\cdot,T)=0$ in $\Om$ and
$\psi\in \CC^\infty_\#(Y)$, unfolding and then passing to the limit, using also
the boundedness of $\btsunfop(\beltramigrad v_\eps)$, we obtain
$$
  \int_{0}^{T}\!\!\int_{\Om\times Y} \lfboth (\nabla u+\nabla_y w)\cdot(\nabla\varphi
   +\nabla_y\psi\,\phi)\di x\di y\di t
  =\int_{0}^{T}\!\!\int_{\Om\times Y} f \varphi\di x\di t\,,
$$
which gives
$$
\begin{aligned}
-\Div\left(\int_{Y}\lfboth (\nabla u+\nabla_y w)\di y\right)=f\,,
\\
-\Div_y\big(\lfboth (\nabla u+\nabla_y w)\big)=0\,.
\end{aligned}
$$
Hence, we are led to the standard homogenized two-scale system which can be obtained
in the case of perfect contact
(i.e., when $\alpha=0$ in \eqref{eq:Circuitk}). Here, the time-dependence is
only parametric through the source $f$.

Factorizing $w(x,t,y)=\widetilde\chi^j_0(y)\partial_ju(x,t)$
and inserting in the previous set of equations, it follows that the homogenized function
$u$ is the solution of the problem
$$
\begin{aligned}
 -\Div\left((\int_{Y}\lfboth(I+\nabla_y\widetilde\chi_0)\di y\big)\nabla u\right)& =f\,,\qquad
&\text{in $\Om_T$}\,;\\
u& =0\,,\qquad &\text{in $\partial\Om\times(0,T)$}\,,
\end{aligned}
$$
where $\widetilde\chi^j_0\in H^1_{\#}(Y)$, $j=1,\dots,N$, are $Y$-periodic functions with null mean average satisfying
$$
-\Div_y\big(\lfboth\nabla_y (y_j+\widetilde\chi^j_0)\big)=0\,,\qquad \text{in $Y$,}
$$
which can be rewritten also in the form
\begin{alignat}2
\label{eq:a33} 
-\Div_y\big(\lfboth\nabla_y (y_j+\widetilde\chi^j_0)\big)&=0\,,\qquad &\text{in $\Perint\cup\Perout$;}
\\
\label{eq:a34} 
[\lfboth\nabla_y (y_j+\widetilde\chi^j_0)\cdot\nu]&=0\,,\qquad &\text{on $\Permemb$.}
\end{alignat}
Notice that, by standard results, the homogenized matrix is symmetric and positive definite,
so that $u$ is uniquely determined, which implies that the whole sequence $\{u_\eps\}$ converges;
moreover, the geometrical setting does not play any role and the result holds both
in the connected/connected and in the connected/disconnected case.

\hfill$\Box$

\bigskip


\end{document}